\documentclass[11pt,a4paper,reqno]{amsart}
\usepackage[margin=1.1in]{geometry}
\usepackage{amssymb,amsmath,graphicx,amsfonts}
\usepackage{mathrsfs}
\usepackage{multirow}
\usepackage{epsfig}
\usepackage{color}
\usepackage{enumerate,enumitem}
\usepackage{empheq}
\usepackage{cases}
\usepackage{cite}
\usepackage{amsgen,amscd}
\usepackage{booktabs}
\usepackage{latexsym,euscript}
\usepackage{bm,hyperref,subcaption}

\usepackage{placeins}
\hypersetup{hidelinks}

\allowdisplaybreaks
\newtheorem{theorem}{Theorem}[section]

\newtheorem{lemma}[theorem]{Lemma}

\newtheorem{remark}[theorem]{Remark}
\theoremstyle{definition}

\numberwithin{equation}{section}

\providecommand{\Dim}{\operatorname{dim}}            
\providecommand{\dim}{\Dim}

\providecommand*{\Dist}[2]{\operatorname{dist}({#1};{#2})}   
\providecommand*{\Dist}[2]{\Dist{#1}{#2}}

\newcommand{\Ba}{{\boldsymbol{a}}}
\newcommand{\Bb}{{\boldsymbol{b}}}

\newcommand{\Be}{{\boldsymbol{e}}}
\newcommand{\Bf}{{\boldsymbol{f}}}

\newcommand{\Bn}{{\boldsymbol{n}}}

\newcommand{\Bt}{{\boldsymbol{t}}}
\newcommand{\Bu}{{\boldsymbol{u}}}
\newcommand{\Bv}{{\boldsymbol{v}}}
\newcommand{\Bw}{{\boldsymbol{w}}}
\newcommand{\Bx}{{\boldsymbol{x}}}

\newcommand{\Bz}{{\boldsymbol{z}}}

\newcommand{\BU}{{\boldsymbol{U}}}

\newcommand{\BX}{{\boldsymbol{X}}}

\newcommand{\mubf}{\boldsymbol{\mu}}

\newcommand{\Ci}{{\cal I}}

\newcommand{\Cw}{{\cal W}}

\newcommand{\bbI}{\mathbb{I}}

\newcommand{\bbP}{\mathbb{P}}

\newcommand{\bbR}{\mathbb{R}}

\newcommand{\Rd}{\mathrm{d}}

\newcommand{\be}{\begin{eqnarray}}
\newcommand{\ee}{\end{eqnarray}}

\newcommand{\ben}{\begin{eqnarray*}}
\newcommand{\een}{\end{eqnarray*}}

\renewcommand{\Ci}{\mathcal{I}}
\renewcommand{\Cw}{\mathcal{W}}

\renewcommand{\bbI}{\textrm{id}}
\newcommand{\hBn}{\hat{\bm{n}}}

\newcommand{\Tp}{\tilde{p}}
\renewcommand{\TH}{\tilde{H}}

\begin{document}

\title[]{A structure--preserving ALE--BGN--MDR method for
Navier--Stokes free boundary problems with moving contact lines and gravity}
\thanks{The work of Nuo Lei is partially supported by the Hong Kong Scholar Program.}

\author[]{Harald Garcke,\,\, Jiashun Hu,\,\, and\,\, Nuo Lei}
\address{Harald Garcke: Fakult\"at f\"ur Mathematik, Universit\"at Regensburg, Regensburg, Germany.
{\rm Corresponding author. Email address: {\tt harald.garcke@ur.de}}}
\address{Jiashun Hu, Nuo Lei: Department of Applied Mathematics, The Hong Kong Polytechnic University, Hong Kong.
{\rm Email addresses: {\tt jiashun.hu@polyu.edu.hk} and {\tt nuo97.lei@polyu.edu.hk}}}
\address{Nuo Lei: Hua Loo-Keng Center for Mathematical Sciences, Academy of Mathematics and Systems Science, Chinese Academy of Sciences, Beijing, China.}

\subjclass[2020]{65M50, 65M60, 35R35, 76D45}

\keywords{Contact angle, arbitrary Lagrangian--Eulerian method, finite element method,
energy dissipation, spurious velocity.}

\maketitle

\begin{abstract}
We propose a gravity-consistent arbitrary Lagrangian--Eulerian finite element method for incompressible Navier--Stokes free-boundary problems with moving contact lines. 
A direct body-force discretization of gravity may fail to ensure consistency between the discrete gravitational work and the variation of the gravitational potential energy on the evolving domain, resulting in an artificial consistency error
and persistent spurious velocities near equilibrium. 
To remove this inconsistency, we reformulate the gravitational potential energy variation as a moving-boundary integral over intermediate ALE configurations and evaluate it exactly using Simpson's quadrature rule. 
This leads to a mildly nonlinear fully discrete scheme in which the gravitational contribution is exactly consistent with the discrete potential-energy variation. 
The proposed method preserves volume exactly, satisfies a discrete energy-dissipation law including gravitational potential energy, and under suitable assumptions, drives the discrete velocity to zero in the long-time regime, thereby excluding persistent gravity-induced spurious velocities. 
Together with the BGN treatment of the free surface and the MDR bulk mesh extension, the scheme maintains accurate interface tracking and good mesh quality near the moving contact line.
Numerical experiments in two and three spatial dimensions confirm the theoretical properties. 
\end{abstract}

\setlength\abovedisplayskip{4pt}
\setlength\belowdisplayskip{4pt}

\section{Introduction}
Incompressible flows with free surfaces and moving contact lines arise in wetting, dewetting, and microfluidic applications. 
Numerical methods for such moving-interface problems are often divided into interface-capturing approaches and front-tracking or body-fitted approaches; the latter explicitly represent the interface by a lower-dimensional moving mesh and are therefore natural for accurate contact-line tracking \cite{BGN2015,Agnese2020,Zhao2020,Zhao2021CMAME,Garcke2023}. 
A central difficulty is that the evolution of the geometry, specifically the liquid--gas interface and the contact line, is coupled with the bulk viscous flow, surface tension, wall wettability, and contact-angle dynamics.
A substantial literature has been devoted to moving contact line models. 
Fixed-contact-angle models prescribe the equilibrium Young angle, whereas dynamic contact-angle models incorporate contact-line physics through either microscopically motivated continuum boundary conditions, such as the generalized Navier slip condition \cite{Qian2003,Qian2006JFM,Guo2024}, 
or effective sharp-interface laws, such as those proposed by Ren and E \cite{Ren2007,Ren2011}. Recent numerical frameworks can accommodate a range of such contact-line laws, including the Ren--E, Cox, Onsager, and contact-angle-hysteresis models \cite{Xu2023}.

Moving contact lines are intrinsically a three-phase junction. Here we consider the standard one-phase free-boundary description, where the gas phase is not solved explicitly but enters through the boundary conditions. The liquid phase satisfies the incompressible Navier--Stokes equations on the moving domain \(\Omega(t)\), whose boundary consists of the fluid--gas free surface \(\Gamma_{\rm F}(t)\) and the wetted fluid--solid boundary \(\Gamma_{\rm s}(t)\).
Body-fitted arbitrary Lagrangian--Eulerian (ALE) methods provide a natural sharp-interface approach for such problems. Since the computational mesh moves with the fluid domain, the free surface and the contact line can be tracked explicitly and contact-angle conditions can be imposed directly. 
For long-time simulations under gravity, however, accurate interface tracking alone is not sufficient. 
The fully discrete method must also preserve the energy-dissipation structure of the continuous system, particularly the rate-of-change law for gravitational potential energy. At the continuous level, gravity is conservative, with potential-energy density \(\rho g z\).
Here, $\rho$ is the mass density, $g$ is the gravitational constant, and $z$ is the vertical coordinate.
In fact, the force $\Bf$ and the gravitational potential energy are defined as 
\[
\Bf=-\nabla(\rho g z)=-\rho g\nabla z,
\qquad
E_{\rm pot}(t)=\int_{\Omega(t)} \rho g z\,\Rd x .
\]
The main difficulty is that \(E_{\rm pot}(t)\) is an integral in the evolving domain \(\Omega(t)\). At the continuous level, the gravitational work is exactly coupled to the temporal variation of this moving-domain integral:
\[
E_{\rm pot}(t_{m+1})-E_{\rm pot}(t_m)
=
\int_{\Omega(t_{m+1})}\rho g z\,\Rd x
-
\int_{\Omega(t_m)}\rho g z\,\Rd x
=
-\int_{t_m}^{t_{m+1}} \int_{\Omega(t)}\Bf\cdot \Bu \,\Rd x\,\Rd t ,
\]
where $\Bu$ is the fluid velocity.
This rate-of-change identity follows from the transport theorem, but it is no longer automatically preserved at the fully discrete level. 

For the Navier--Stokes free-boundary problem with a contact line and gravity considered in this work, the total energy also consists of the kinetic energy, the free-surface energy, the solid-wall energy, namely
\[
E_{\rm kin}(t)= \frac{\rho}{2} \int_{\Omega(t)} |\Bu|^2\,\Rd x,
\qquad
E_{\rm fs}(t) \propto |\Gamma_{\rm F}(t)|,
\qquad
E_{\rm w}(t) \propto |\Gamma_{\rm s}(t)|,
\]
where $\Gamma_F(t)$ is the free surface, $\Gamma_s(t)$ is the part of the solid substrate wetted by the fluid, and $|\cdot|$ denotes surface area; see Figure~\ref{fig:domain}. 
Among these terms, the kinetic and gravitational potential energies are moving-domain volume integrals, whereas the free-surface and wall energies are integrals over moving boundaries. 
The rate-of-change law for the kinetic energy in an ALE formulation was studied in \cite{Gao2024_rotation}, where a first-order structure-preserving method was developed. Related structure-preserving discretizations for moving-domain integrals with linear integrands have been investigated for hyperbolic conservation laws and MMPDE-based moving meshes \cite{TangTang2003SINUM,DziukElliott2013MC}. 
For zeroth-order integrands, the preservation of such a rate-of-change law is often manifested as volume conservation, e.g., for \(\int_{\Omega(t)}1\,\Rd x\), see \cite{Bao2021}. 
For the surface energy, which is proportional to the surface area, the associated rate-of-change law was preserved in \cite{BGN2015} by the well-known BGN formulation. This line of research is closely related to structure-preserving methods for curvature flows, such as surface diffusion, where preserving the area-decreasing and volume-conservation structures is also a primary objective. In this context, several approaches have been developed, including the use of Lagrange multipliers in variational formulations~\cite{Gao-Li-2025,GarckeJiangSuZhang2025SISC,Zhang2026} and the introduction of intermediate geometric quantities based on the homotopy idea of Jiang and Li~\cite{Jiang2021}. These developments have led to mature methods that exactly preserve volume in three dimensions and area in two dimensions~\cite{Bao2021,Garcke2023}. 
Overall, these works demonstrate that preserving the discrete rate-of-change laws of geometric quantities is essential for robust long-time simulations.

However, in ALE free-boundary flows, the gravitational potential energy is more tightly coupled to the Navier--Stokes equations, making the preservation of its rate of change within the total energy dissipation law more subtle.
The domain and boundary geometries are governed by the mesh velocity, whereas the corresponding energy contribution must enter the energy estimate of the momentum equation. 
Thus the discretization has to handle the fact that the momentum test function is not the ALE mesh velocity, and it must pair the gravitational work with the moving-domain potential-energy variation at the fully discrete level. Moreover, the integral is more complicated than volume conservation because the integrand depends on the geometric position of the mesh. These features make the discrete balance between gravitational potential energy and gravitational work nontrivial in an ALE formulation.

In many ALE discretizations for free-boundary flows, gravity is treated directly as a body force on the right-hand side of the momentum equation. Although this treatment is natural on fixed domains, it does not automatically produce the correct potential-energy variation on a moving domain. If the gravitational work is evaluated on the old domain $\Omega_h^m$, while the potential-energy change is determined by the geometric deformation from $\Omega_h^m$ to $\Omega_h^{m+1}$, the discrete energy law contains a residual induced by the geometric defect.
Although the temporal error in the numerical solution is of the same order, the key issue is that this residual is not balanced in the discrete energy-dissipation law and therefore destroys the sign structure needed in the energy estimate. In near-equilibrium and long-time computations, the uncompensated residual may pollute the relaxation process and, in the pure-gravity setting analyzed below, the discrete energy estimate no longer implies the expected decay of the velocity. Related spurious-current phenomena are well documented for surface-tension balanced-force discretizations and two-phase flow computations \cite{Ganesan2007,BGN2013_spurious,Patel2017,Nangia2019}; the mechanism considered here is the mismatch in the rate-of-change law for the gravitational potential energy.

This paper develops a gravity-consistent body-fitted ALE discretization in which the discrete gravitational work exactly matches the variation of the gravitational potential energy, thereby preserving the energy-dissipation structure of the fully discrete system.
The key step is to rewrite the gravitational work as a free-surface geometric flux compatible with the ALE motion, thereby resolving the mismatch between the momentum test function and the ALE mesh velocity. After integration by parts, the bulk divergence term is absorbed into a generalized dynamic pressure, the solid-wall flux vanishes by impermeability, and the remaining free-surface flux is linked to the normal mesh motion through the ALE kinematic condition. We then introduce a family of intermediate geometries between two consecutive discrete free surfaces and represent the gravitational potential-energy variation over one time step as a time integral over these geometries. Since the mesh moves linearly within a time step, the product of the height function and the pulled-back normal vector is a polynomial of degree at most three in the intermediate time variable. 
With the help of Simpson's rule, we can evaluate this integral exactly. The resulting gravitational term is paired with the discrete variation of $\rho g\int_{\Omega_h(t)}z\,\Rd x$, yielding a discrete energy-dissipation law that includes gravitational potential energy. On this basis, we also prove a long-time relaxation property in the pure-gravity setting considered in this paper: as $T\to\infty$, the discrete velocity field decays to zero, excluding persistent spurious velocities caused by gravity-inconsistent discretization.
At the level of the gravitational discretization, the treatment is not tied to a specific contact-angle condition.
We also investigate the dynamic contact-angle models based on generalized Navier slip, and the Ren--E model. For the long-time computations reported in this paper, we use the existing BGN/MDR ALE mesh-update framework \cite{Hu2022,Hu2026_Droplet} to maintain mesh stability.

Let us summarize the main properties of the proposed scheme:
\begin{enumerate}
\item \textbf{Gravity-consistent discretization:}
The gravitational contribution is discretized consistently with the variation of the gravitational potential energy, leading to a total energy-dissipation law including gravity.

\item \textbf{Eliminating spurious velocities:}
The scheme drives the discrete velocity to zero in the long-time regime, excluding gravity-induced spurious velocities near equilibrium.

\item \textbf{Exact volume conservation:}
The proposed ALE-FEM scheme preserves the volume exactly at the fully discrete level.

\item \textbf{Good mesh quality:}
Combined with the BGN formulation and the MDR mesh extension, the method maintains stable meshes for long-time moving-contact-line simulations.
\end{enumerate}

The rest of the paper is organized as follows. 
Section \ref{sec2} introduces the continuous model and the boundary conditions. 
Sections \ref{sec3} and \ref{sec:theorem} present the fully discrete ALE-FEM schemes, and prove structure-preserving properties. 
Section \ref{sec:Newton} discusses the associated nonlinear system and the Newton solver for the system. 
Finally, Section \ref{sec:numerical} provides numerical validation, followed by concluding remarks in Section \ref{sec:conclusion}.

\begin{figure}[htbp]
\centering
\includegraphics[width=0.45\textwidth]{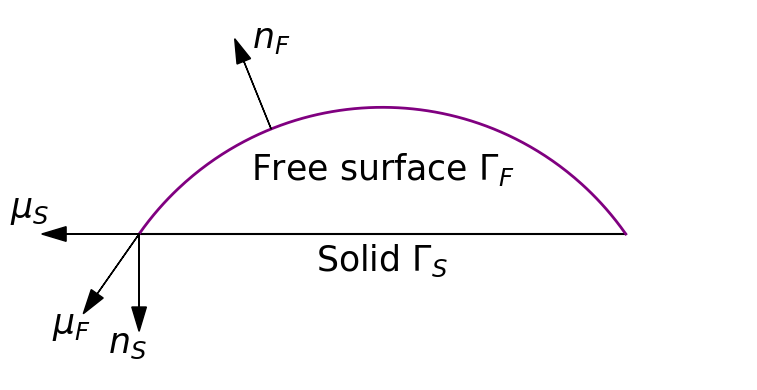}
\includegraphics[width=0.45\textwidth]{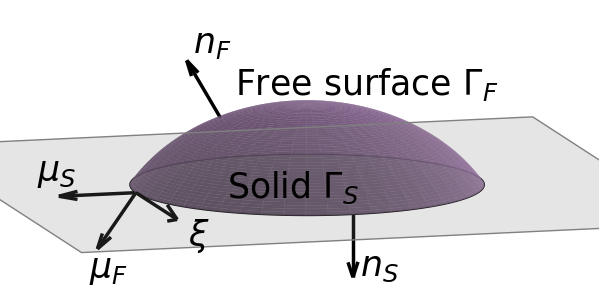}
\caption{Geometrical configurations.}
\label{fig:domain}
\end{figure}

\section{The mathematical model}
\label{sec2}
We consider an incompressible Newtonian fluid droplet with constant density $\rho$, dynamic viscosity $\mu$, and surface tension coefficient $\gamma$, occupying a time-dependent domain $\Omega(t)\subset\mathbb{R}^d$, $d\in\{2,3\}$. The boundary $\partial\Omega(t)$ consists of the free surface $\Gamma_F(t)$ and the solid substrate $\Gamma_s(t)$, which meet at the contact set
$\Gamma_c(t):=\overline{\Gamma_F(t)}\cap\overline{\Gamma_s(t)}.$
We denote by $\Bn_F$ and $\Bn_s$ the unit outward normals to $\Gamma_F(t)$ and $\Gamma_s(t)$, respectively, and by $\bm{\mu}_F$ and $\bm{\mu}_s$ the unit outward conormals to $\Gamma_F(t)$ and $\Gamma_s(t)$ at $\Gamma_c(t)$, respectively. 
The contact angle $\theta_c$ is defined as the angle between $\bm{\mu}_F$ and $\bm{\mu}_s$, as illustrated in Figure~\ref{fig:domain}.

After nondimensionalizing the system using a characteristic length $L$ and velocity $U$, the system can be simplified by using two dimensionless parameters: the Reynolds number and the Weber number,
\[
Re=\frac{\rho U L}{\mu},
\qquad
W\!e=\frac{\rho U^2 L}{\gamma}.
\]

Let $\Bu$ and $p$ denote the dimensionless velocity and pressure, respectively. 
We consider the following incompressible Navier--Stokes equations in $\Omega(t)$ 
\begin{subequations}
\label{2:PDE}
\begin{align}
\partial_t \Bu + \Bu\cdot\nabla \Bu - \nabla\cdot\bm{\sigma} &= -g\Be_z
&& \text{in } \Omega(t), \label{2:PDE-1} \\
\nabla\cdot \Bu &=0
&& \text{in } \Omega(t), \label{2:PDE-2}
\end{align}
\end{subequations}
where $g$ is the dimensionless gravitational acceleration, $\Be_z$ is the unit vector in the vertical direction, and the Cauchy stress tensor and the rate-of-strain tensor are defined by
\[
\bm{\sigma}(\Bu,p)=\frac{2}{Re}\,D(\Bu)-p\,\mathbb{I},
\qquad
D(\Bu)=\frac12\bigl(\nabla\Bu+(\nabla\Bu)^\top\bigr),
\]
where $p$ is the non-dimensional pressure.
For convenience, we consider the $x$-$z$ axis in 2D and the $x$-$y$-$z$ axis in 3D, where $z$ is always the vertical direction. 

On the free surface, the stress balance with surface tension gives the following.
\begin{equation}
\bm{\sigma}\,\Bn_F = -\frac{1}{W\!e}\,H\,\Bn_F
\qquad \text{on } \Gamma_F(t),
\label{2:PDE-bF}
\end{equation}
where $H$ denotes the mean curvature of $\Gamma_F(t)$.

On the solid substrate, we impose the impermeability condition and the Navier slip condition:
\begin{align}
\mathbb{P}_s \left( \bm{\sigma} \Bn_s + \beta_s \Bu \right) 
&= \bm{0},
\quad &&\text{on } \Gamma_s(t)
\label{2:PDE-bs2-1} \\
\Bu \cdot \Bn_s &= 0, 
\quad &&\text{on } \Gamma_s(t), \label{2:PDE-bs2-2}
\end{align} 
where $\mathbb{P}_s=\mathbb{I}-\Bn_s\otimes\Bn_s$ is the tangential projection operator on $\Gamma_s(t)$,
and $\beta_s \ge 0$ is the dimensionless slip coefficient.

At the contact set $\Gamma_c(t)$, the dynamic contact angle $\theta_c$ is prescribed by
the Ren--E model \cite{Ren2007,Ren2011}:
\begin{align}
  \bm{\mu}_F\cdot\bm{\mu}_s &= \cos\theta_c
  && \text{on } \Gamma_c(t), \label{eq:PDE-cont1} \\
  \beta_c\, U_c &= \frac{1}{W\!e}\,
  \bigl(\cos\theta_Y - \cos\theta_c\bigr)
  && \text{on } \Gamma_c(t), \label{eq:PDE-cont2}
\end{align}
where $\theta_Y$ is Young's equilibrium contact angle,
$U_c := \mathbb{P}_s \Bu\cdot\bm{\mu}_s = \Bu\cdot\bm{\mu}_s$ 
is the contact-line speed (the component of the slip velocity normal to
$\Gamma_c$ within $\Gamma_s$), and $\beta_c \ge 0$ is the dimensionless contact-line friction coefficient.
Here \(\bm{\mu}_s\) lies in the tangent plane of the solid boundary, so that \(\Bn_s\cdot \bm{\mu}_s = 0\). 
At the moving contact line, the dynamic contact angle satisfies
\[
\bm{\mu}_F
=
\cos\theta_c\,\bm{\mu}_s
+
\sin\theta_c\,\Bn_s
\qquad \text{on } \Gamma_c(t).
\]

The total energy consists of the kinetic energy of the fluid, the surface energy of the free boundary, the wall--adhesion energy on the solid substrate, and the gravitational potential energy:
\begin{equation}\label{def_energy}
E(t):=
\underbrace{\frac12\int_{\Omega(t)} |\Bu|^2\,\Rd x}_{E_{\rm kin}}
\underbrace{ + \frac{1}{W\!e} |\Gamma_F(t)| }_{E_{\rm fs}}
\underbrace{ -\frac{\cos\theta_Y}{W\!e} |\Gamma_s(t)| }_{E_{\rm w}}
\underbrace{ + g\int_{\Omega(t)} z\,\Rd x}_{E_{\rm pot}}.
\end{equation}
The following dissipation property holds for the continuous model.
\begin{lemma}
The total energy \(E(t)\) defined in \eqref{def_energy} satisfies
\begin{equation}\label{lem:engdisp}
\frac{\Rd}{\Rd t}E(t)
=
-\frac{2}{Re}\int_{\Omega(t)} |D(\Bu)|^2\,\Rd x
-\int_{ \Gamma_s(t)} \beta_s|\mathbb{P}_s\Bu|^2 \Rd s
-\int_{ \Gamma_c(t)} \beta_c |\Bu \cdot \bm \mu_s|^2 \Rd s 
\le 0.
\end{equation}
\end{lemma}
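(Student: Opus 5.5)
We differentiate each of the four contributions in \eqref{def_energy} separately and then combine them using the weak form of \eqref{2:PDE} tested with $\Bu$. The domain $\Omega(t)$ moves with the fluid velocity: the normal velocity of $\Gamma_F(t)$ is $\Bu\cdot\Bn_F$, and by \eqref{2:PDE-bs2-2} the wetted part $\Gamma_s(t)$ only slides within the fixed substrate. The contact set moves with the normal speed $U_c=\Bu\cdot\bm\mu_s$ within the substrate plane.

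\emph{Kinetic and potential energy.} By the Reynolds transport theorem and \eqref{2:PDE-2},
\[
\frac{\Rd}{\Rd t}E_{\rm kin}=\int_{\Omega(t)}\Bu\cdot(\partial_t\Bu+\Bu\cdot\nabla\Bu)\,\Rd x,
\qquad
\frac{\Rd}{\Rd t}E_{\rm pot}=g\int_{\Omega(t)}\Bu\cdot\nabla z\,\Rd x=g\int_{\Omega(t)}\Bu\cdot\Be_z\,\Rd x.
\]
For the potential energy we use $\frac{\Rd}{\Rd t}\int_{\Omega(t)} z=\int_{\Omega(t)} \nabla\cdot(z\Bu)$, which holds because $z$ does not depend on time. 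We now test \eqref{2:PDE-1} with $\Bu$. The gravity term then exactly cancels $\frac{\Rd}{\Rd t}E_{\rm pot}$. Integrating by parts, we use $\bm\sigma:\nabla\Bu=\frac{2}{Re}|D(\Bu)|^2$, since $p\,\nabla\cdot\Bu=0$ and $\bm\sigma$ is symmetric. This gives
\[
\frac{\Rd}{\Rd t}(E_{\rm kin}+E_{\rm pot})=-\frac{2}{Re}\int_{\Omega(t)}|D(\Bu)|^2+\int_{\Gamma_F(t)}\bm\sigma\Bn_F\cdot\Bu+\int_{\Gamma_s(t)}\bm\sigma\Bn_s\cdot\Bu.
\]
On $\Gamma_s$ we have $\Bu=\mathbb P_s\Bu$ by \eqref{2:PDE-bs2-2}, so \eqref{2:PDE-bs2-1} turns the last integral into $-\int_{\Gamma_s}\beta_s|\mathbb P_s\Bu|^2$. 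On $\Gamma_F$, \eqref{2:PDE-bF} turns the free-surface integral into $-\frac1{W\!e}\int_{\Gamma_F}H\,\Bu\cdot\Bn_F$.

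\emph{Surface energies.} This is the main step. We need the first variation of area for a hypersurface with boundary, where the boundary moves along the substrate:
\[
\frac{\Rd}{\Rd t}|\Gamma_F(t)|=\int_{\Gamma_F(t)}H\,\Bu\cdot\Bn_F\,\Rd s+\int_{\Gamma_c(t)}\Bu\cdot\bm\mu_F\,\Rd s,
\]
with the sign convention for $H$ chosen consistently with \eqref{2:PDE-bF}. This follows from $\frac{\Rd}{\Rd t}|\Gamma|=\int_\Gamma\nabla_s\cdot\Bu$ together with the surface divergence theorem $\int_\Gamma\nabla_s\cdot\Bu=\int_\Gamma H\,\Bu\cdot\Bn+\int_{\partial\Gamma}\Bu\cdot\bm\mu$. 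Since $\Gamma_s(t)$ lies in the fixed flat substrate, $\frac{\Rd}{\Rd t}|\Gamma_s(t)|=\int_{\Gamma_c(t)}\Bu\cdot\bm\mu_s$. Moreover, $\Bu\cdot\Bn_s=0$ on $\Gamma_c$ by continuity from \eqref{2:PDE-bs2-2}. The decomposition $\bm\mu_F=\cos\theta_c\bm\mu_s+\sin\theta_c\Bn_s$ then gives $\Bu\cdot\bm\mu_F=\cos\theta_c\,U_c$ on $\Gamma_c$.

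\emph{Combining.} Adding the two surface-energy rates to the bulk result, the curvature integrals cancel. The remaining contact-line contributions are
\[
\frac1{W\!e}\int_{\Gamma_c}(\cos\theta_c-\cos\theta_Y)U_c\,\Rd s=-\int_{\Gamma_c}\beta_c U_c^2\,\Rd s,
\]
where the equality uses \eqref{eq:PDE-cont2}. Collecting all terms yields \eqref{lem:engdisp}. Nonpositivity follows because $\beta_s,\beta_c\ge0$.

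The expected obstacle is getting the signs right: the convention for $H$ versus $\Bn_F$ in \eqref{2:PDE-bF}, and the treatment of the boundary term of the area variation at $\Gamma_c$. For a smooth geometry we assume sufficient regularity so that the transport theorems apply; in particular, in the 2D case $\Gamma_c$ consists of points and the integrals over it are sums.
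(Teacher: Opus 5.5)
Your argument is correct. The paper states this lemma without proof, but its discrete energy proof in Theorem~\ref{thm:structure_preserving} follows the same pattern: test the momentum equation with the velocity, use the first variation of area with its conormal term on $\Gamma_c$, and close with the Ren--E law.

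The sign you left open is fixed by the paper's identity $-\Delta_\Gamma\mathrm{id}=H\Bn_F$, i.e.\ $H=\nabla_\Gamma\cdot\Bn_F$. With this convention your formula $\frac{\Rd}{\Rd t}|\Gamma_F(t)|=\int_{\Gamma_F(t)}H\,\Bu\cdot\Bn_F\,\Rd s+\int_{\Gamma_c(t)}\Bu\cdot\bm{\mu}_F\,\Rd s$ holds exactly as written, so it cancels against the free-surface stress term as you claimed.
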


\begin{remark}\upshape
\label{rmk:energy}
The total energy is strictly positive when the reference height is chosen such
that \(z=0\) on the solid boundary \(\Gamma_s(t)\). With this choice, the
gravitational potential energy is non-negative, and the kinetic energy is also
non-negative. 
Since
\(\Gamma_F(t)\) is a curved free interface, while \(\Gamma_s(t)\) is its
projected contact region on the flat substrate, we have
\(|\Gamma_F(t)|>|\Gamma_s(t)|.
\)
Therefore,
\[
\frac{1}{W\!e}
\left(
|\Gamma_F(t)|-\cos\theta_Y|\Gamma_s(t)|
\right)
>0 ,
\]
with $W\!e > 0$.
Thus the surface energy is strictly positive, and consequently the total
energy satisfies
\(E(t)>0\).
\end{remark}

\subsection{ALE weak formulation}
\label{sec3.1}
Testing \eqref{2:PDE-1} with \( \Bv \),
testing \eqref{2:PDE-2} with $q$,
testing \eqref{2:PDE-bs2-2} with $\lambda_s$
and using the Navier slip condition \eqref{2:PDE-bs2-1} together with \eqref{2:PDE-bF}, 
we obtain
\begin{subequations}
\label{eq:continuous_weak}
\begin{align}
&(\partial_t\Bu+(\Bu\cdot\nabla)\Bu,\Bv)
+
\frac{2}{Re}
(D(\Bu),D(\Bv))
- (p,\nabla\cdot\Bv)
+ (\nabla\cdot\Bu,q)
+
\beta_s
\langle
\mathbb P_s\Bu,
\mathbb P_s\Bv
\rangle_{\Gamma_s(t)}
\notag\\
&\qquad
- \langle \kappa_s \Bn_s, \Bv \rangle_{\Gamma_s(t)}
+ \langle \lambda_s \Bn_s, \Bu \rangle_{\Gamma_s(t)}
+
\frac{1}{W\!e}
\langle H\Bn_F,\Bv\rangle_{\Gamma_F(t)}
= -g(\Be_z,\Bv),
\label{eq:weak_momentum}
\end{align}
\end{subequations}
where $\kappa_s$ is the Lagrange multiplier associated with the
impermeability constraint $\Bu\cdot\Bn_s=0$ on $\Gamma_s(t)$, and
$\lambda_s$ denotes the corresponding test function.
For simplicity, in the above formulation, we use the notation 
\[
(\Bu,\Bv):=\int_{\Omega(t)}\Bu\cdot\Bv\,\Rd x,
\qquad
\langle\Bu,\Bv\rangle_{\Gamma(t)}
:=
\int_{\Gamma(t)}\Bu\cdot\Bv\,\Rd s .
\]
For the mean curvature $H$ on $\Gamma_F(t)$, the geometric identity
\(
-\Delta_\Gamma \mathrm{id}=H\Bn_F
\)
and integration by parts on \(\Gamma_F(t)\) give, for any admissible test function \(\bm{\eta}\),
\begin{align}
-\langle &H\Bn_F,\bm{\eta}\rangle_{\Gamma_F(t)}
=
-\langle\nabla_\Gamma\mathrm{id},\nabla_\Gamma\bm{\eta}\rangle_{\Gamma_F(t)}
+
\langle\bm{\mu}_F,\bm{\eta}\rangle_{\Gamma_c(t)}
\notag\\
&\qquad =
-\langle\nabla_\Gamma\mathrm{id},\nabla_\Gamma\bm{\eta}\rangle_{\Gamma_F(t)}
+
\langle
\cos\theta_c\,\bm{\mu}_s+\sin\theta_c\,\Bn_s,
\bm{\eta}
\rangle_{\Gamma_c(t)}
\notag\\
&\qquad =
-\langle\nabla_\Gamma\mathrm{id},\nabla_\Gamma\bm{\eta}\rangle_{\Gamma_F(t)}
+
\langle
\cos\theta_Y\,\bm{\mu}_s
+\sin\theta_c\,\Bn_s
-W\!e\,\beta_c(\Bu\cdot\bm{\mu}_s)\bm{\mu}_s,
\bm{\eta}
\rangle_{\Gamma_c(t)}. 
\label{eq:meanH}
\end{align}
Here we have used the relaxed contact-angle condition in \eqref{eq:PDE-cont2}.

We now rewrite the continuous model in an ALE framework. Let
\(
\Phi(t):\Omega(0)\to\Omega(t)
\)
be the ALE map and let
\(
\Bw=\partial_t\Phi\circ\Phi^{-1}
\)
be the associated mesh velocity. 
The ALE mesh velocity is introduced to track the evolution of the moving fluid domain. 
In particular, the normal velocity of the mesh must coincide with the normal velocity of the fluid on the free surface:
\[
\Bw\cdot\Bn_F=\Bu\cdot\Bn_F
\qquad\text{on }\Gamma_F(t),
\]
while its tangential component may be chosen to improve the mesh quality.
On the solid substrate, the mesh is required to remain attached to the solid boundary. 
Together with the impermeability condition for the
fluid, this gives
\[
\Bw\cdot\Bn_s=\Bu\cdot\Bn_s=0
\qquad\text{on }\Gamma_s(t).
\]
Consequently, at the contact set $\Gamma_c(t)$, the fluid and mesh
velocities have the same component in the conormal direction
$\bm{\mu}_s$:
\[
(\mathbb P_s\Bu)\cdot\bm{\mu}_s
=
(\mathbb P_s\Bw)\cdot\bm{\mu}_s
=
\Bw\cdot\bm{\mu}_s
=
\Bu\cdot\bm{\mu}_s .
\]

The ALE material derivative is defined by
\[
\partial_t^\bullet \Bu
:=
\partial_t\Bu+\Bw\cdot\nabla\Bu.
\]
We introduce the skew-symmetric trilinear form
\[
b(\Bz;\Bu,\Bv)
:=
\frac12(\Bz\cdot\nabla\Bu,\Bv)
-
\frac12(\Bz\cdot\nabla\Bv,\Bu).
\]
Since
$(\Bu-\Bw)\cdot\Bn=0$ on $\Gamma(t)$
and $\nabla\cdot\Bu=0$, 
integration by parts yields
\[
\bigl(((\Bu-\Bw)\cdot\nabla)\Bu,\Bv\bigr)
=
b(\Bu-\Bw;\Bu,\Bv)
+
\frac12
(\Bu,\Bv\,\nabla\cdot\Bw).
\]
Consequently, the first term in \eqref{eq:weak_momentum} can be rewritten as
\[
\begin{aligned}
\bigl(\partial_t\Bu+(\Bu\cdot\nabla)\Bu,\Bv\bigr)
&=
(\partial_t^\bullet\Bu,\Bv)
+
\frac12(\Bu,\Bv\,\nabla\cdot\Bw)
+
b(\Bu-\Bw;\Bu,\Bv).
\end{aligned}
\]

\section{Fully Discrete ALE--FEM Schemes}
\label{sec3}
In this section, we present the fully discrete ALE--FEM scheme. The main idea
is to decompose the construction of the mesh velocity into two parts. We first
determine the fluid variables \(\Bu\), \(p\), together with the mesh velocity
on the free boundary \(\Gamma_F\) using the BGN method. 
It is then extended
to the bulk mesh by the MDR method.

The rest of this section is organized as follows. In
Subsection~\ref{sec:discretization}, we introduce the finite element spaces and
the basic fully discrete notation. In Subsection~\ref{sec:direct}, we
present the direct ALE--FEM discretization. The proposed decoupled schemes in
two and three dimensions are then given in
Subsections~\ref{sec:BGN-2d} and~\ref{sec:BGN-3d}, respectively. Finally,
Subsection~\ref{sec:MDR} describes the MDR extension used to propagate the
boundary mesh velocity into the interior of the computational domain.

\subsection{Discrete Setting and Finite Element Spaces}
\label{sec:discretization}
Let \(\Omega_h^0\) be the initial computational domain, equipped with a shape-regular and quasi-uniform triangulation \(\mathcal{K}_h^0\) of mesh size \(h\). Its boundary is decomposed into the discrete free boundary and the discrete solid boundary,
\[
\vspace{-5pt}
\Gamma_h^0=\Gamma_{F,h}^0\cup\Gamma_{s,h}^0,
\qquad
\Gamma_{c,h}^0=\Gamma_{F,h}^0\cap\Gamma_{s,h}^0,
\]
where \(\Gamma_{c,h}^0\) denotes the discrete contact line.

Let \(t_m=m\tau\), \(m=0,1,\ldots,M\), be a uniform partition of the time interval. At time \(t_m\), the discrete domain is denoted by \(\Omega_h^m\), with boundary decomposition
\[
\Gamma_h^m=\Gamma_{F,h}^m\cup\Gamma_{s,h}^m,
\qquad
\Gamma_{c,h}^m=\Gamma_{F,h}^m\cap\Gamma_{s,h}^m.
\]
The mesh is updated by the discrete ALE flow map
\[
\psi_h^{m+1}:=\bbI+\tau\Bw_h^{m+1},
\qquad
\Omega_h^{m+1}=\psi_h^{m+1}(\Omega_h^m).
\]
For convenience, we set \(\Omega_h^{-1}=\Omega_h^0\) and \(\psi_h^0=\bbI\).
The initial velocity is given as \(\Bu_h^0 = \Ci_h \Bu_0\) on \(\Omega_h^0\), 
where \(\Ci_h\) denotes the interpolation operator. 
In addition, we set \(\Bw_h^0 = \mathbf{0}\).

We denote by \(\BX_h^m\) the position vector of the discrete domain at time \(t_m\). Hence the vertical coordinate at the new time level is
\[
z_h^{m+1}
=
\BX_h^{m+1}\cdot\Be_z
=
(\BX_h^m+\tau\Bw_h^{m+1})\cdot\Be_z .
\]

Let \(\mathcal{K}_h^m\) be the simplicial triangulation of \(\Omega_h^m\). We assume that \(\mathcal{K}_h^m\) remains shape-regular and quasi-uniform for all time steps. For the bulk velocity and pressure, we use the Taylor--Hood \(P_2\)-\(P_1\) finite element pair $(\mathcal{U}_h^m, \mathcal{Q}_h^m)$
and $\mathcal{W}_h^m$ is the linear finite element space
\[
\begin{aligned}
\mathcal{U}_h^m
&:=
\bigl\{
\Bv_h\in H^1(\Omega_h^m)^d:
\Bv_h|_K\in\mathbb{P}_2(K)^d
\quad \forall K\in\mathcal{K}_h^m
\bigr\},
\\
\mathring{\mathcal{U}}_h^m
&:=
\bigl\{
\Bv_h\in H_0^1(\Omega_h^m)^d:
\Bv_h|_K\in\mathbb{P}_2(K)^d
\quad \forall K\in\mathcal{K}_h^m
\bigr\},
\\
\mathcal{Q}_h^m
&:=
\bigl\{
q_h\in H^1(\Omega_h^m):
q_h|_K\in\mathbb{P}_1(K)
\quad \forall K\in\mathcal{K}_h^m
\bigr\}, 
\\
\mathcal{W}_h^m 
&:= \bigl\{ \Bw_h \in H^1(\Omega_h^m)^d : \Bw_h|_K \in P^1(K) \; \forall K \in \mathcal{K}_h^m \bigr\}.
\end{aligned}
\]
As established in \cite{Boffi2013}, these spaces satisfy the inf-sup condition: there exists a constant \(\beta>0\), independent of \(h\), such that for all \(q_h\in\mathcal{Q}_h^m\),
\[
\beta\|q_h-\bar q_h\|_{L^2(\Omega_h^m)}
\le
\sup_{\Bv_h\in\mathring{\mathcal{U}}_h^m\setminus\{\mathbf{0}\}}
\frac{(\nabla\cdot\Bv_h,q_h)_{\Omega_h^m}}
{\|\Bv_h\|_{H^1(\Omega_h^m)}},
\]
where
\(
\bar q_h
:=
\frac{1}{|\Omega_h^m|}
\int_{\Omega_h^m}q_h\,\Rd x .
\)
In fact, all other inf-sup stable pairs can be used; see \cite{BGN2020HoNA}.

On a discrete boundary, we denote the scalar-valued and vector-valued continuous piecewise linear spaces by \(S_h(\cdot)\) and \(S_h(\cdot)^d\), respectively. 
The corresponding piecewise quadratic spaces are denoted by \(S_{h,2}(\cdot)\) and \(S_{h,2}(\cdot)^d\).

We use the standard mass-lumped inner product defined in Definition~43 of~\cite{BGN2020HoNA}, denoted by
$\langle\cdot,\cdot\rangle_{\Gamma_h^m}^{h}$.
To impose the normal constraint on the solid boundary, we introduce the weighted nodal pairing
\begin{align}\label{eq:mass_lumping_nodal}
\langle \lambda_s \Bn_s, \Bu_h\rangle_{\Gamma_{s,h}^m}^{s,h}
:=
\sum_{\sigma\in\mathcal T_h(\Gamma_{s,h}^m)}
\frac{ |\sigma|}{N_2}
\sum_{P\in\mathcal N_2(\sigma)}
\lambda_s(P) \Bn_s(P) \cdot \Bu_h(P),
\end{align}
where $\mathcal{N}_2(\sigma)$ is the collection of all vertices 
and edge midpoints corresponding to the degrees of freedom for quadratic elements, 
and \( N_2=\dim\mathbb P_2(\sigma)=\binom{d+2}{d}. \)

\subsection{Two-dimensional direct ALE--FEM scheme}
\label{sec:direct}
In this subsection, we first present a two-dimensional direct ALE--FEM scheme. 
In this reference scheme, the gravitational force is treated in the standard way as a body-force term on the right-hand side of the momentum equation. 
Although this treatment is natural and leads to a direct extension of the ALE--BGN--MDR method without gravity, 
it does not, in general, match the discrete gravitational work with the variation of the gravitational potential energy on the moving domain. 
After introducing the scheme, we therefore analyze the resulting inconsistent error in the discrete energy balance.

For the time discretization of \eqref{2:PDE-1}, 
the ALE time-derivative term and the geometric-divergence term are treated together to preserve
the energy-dissipation structure. Let $\widetilde{\Bv}$ denote the ALE
transport of $\Bv$, satisfying
\[
\partial_t^\bullet\widetilde{\Bv}=0,
\qquad
\widetilde{\Bv}(\cdot,t_m)=\Bv.
\]
Then, by the transport theorem,
\begin{align*}
&\left( \partial_t^\bullet \Bu,\Bv \right)_{\Omega_h^m}
+
\frac{1}{2}\left( \Bu,\Bv\,\nabla\cdot\Bw \right)_{\Omega_h^m}
=
\frac{1}{2}
\left( \partial_t^\bullet \Bu,\Bv \right)_{\Omega_h^m}
+
\frac{1}{2}
\frac{\Rd}{\Rd t}\Big|_{t=t_m}
\left( \Bu,\tilde{\Bv} \right)_{\Omega(t)} \notag \\
&\qquad 
\approx \frac{1}{2\tau}
\left(
\Bu_h^{m+1}\circ\psi^m_h - \Bu_h^m,
\Bv_h\circ\psi^m_h
\right)_{\Omega_h^{m-1}} 
+ \frac{1}{2\tau}
\left[
\left(
\Bu_h^{m+1},\Bv_h
\right)_{\Omega_h^m}
-
\left(
\Bu_h^m,\Bv_h\circ\psi^m_h
\right)_{\Omega_h^{m-1}}
\right].
\end{align*}
This is the temporal discretization used in the fully discrete ALE--FEM schemes below.

Based on the continuous formulation \eqref{eq:weak_momentum} and \eqref{eq:meanH},
we replace the continuous variables and test spaces by their finite element counterparts on the discrete domain $\Omega_h^m$, and use the
temporal discretization introduced above. 
This leads to the following
fully discrete ALE--FEM scheme.

Given \(\Bu_h^m\), \(\Bw_h^m\) and domain \(\Omega_h^m\), we seek
\(
(\Bu_h^{m+1}, p_h^{m+1}, \kappa_{s,h}^{m+1}, H_h^{m+1}, \widetilde \Bw_h^{m+1}, \varphi_{c,h}^{m+1})
\)
such that
\begin{subequations}\label{BGN-old}
\begin{align}
&\frac{1}{2\tau}( \Bu_h^{m+1}, \Bv)_{\Omega_h^{m}}
+ \frac{1}{2\tau}( \Bu_h^{m+1}\circ \psi_h^m , \Bv \circ \psi_h^m )_{\Omega_h^{m-1}}
+ b\!\left( \Bu_h^m - \Bw_h^m; \Bu_h^{m+1}\circ \psi_h^m, \Bv \circ \psi_h^m\right)_{\Omega_h^{m-1}} \notag\\
&\quad + (\frac{2}{Re} D( \Bu_h^{m+1}), D(\Bv))_{\Omega_h^{m}}
- (p_h^{m+1}, \nabla \cdot \Bv)_{\Omega_h^{m}}
+ (q, \nabla \cdot \Bu_h^{m+1})_{\Omega_h^{m}}
+ \frac{1}{W\!e} \langle H_h^{m+1} \Bn_{F,h}^m, \Bv \rangle_{\Gamma^{m}_{F,h}} \notag\\
&\quad - \langle \kappa_{s,h}^{m+1} \Bn_s, \Bv \rangle_{\Gamma^{m}_{s,h}}^{ s, h }
+ \langle \lambda_s \Bn_s, \Bu_h^{m+1} \rangle_{\Gamma^{m}_{s,h}}^{ s, h }
+ \beta_s \langle \mathbb P_s\Bu_h^{m+1},\mathbb P_s\Bv\rangle_{\Gamma_{s,h}^m} 
+ \langle \widetilde \Bw_h^{m+1} \cdot \Bn_s, \phi_c \rangle^{ h }_{\Gamma_{c,h}^m} \notag\\
&\quad =
\frac{1}{\tau}(\Bu_h^{m}, \Bv\circ \psi_h^m )_{\Omega_h^{m-1}}
- g(\Be_z,\Bv)_{\Omega_h^m},
\label{BGN-fluid-old} \\
&-\langle H_h^{m+1} \Bn_{F,h}^m, \bm{\eta} \rangle_{\Gamma_{F,h}^m}^{ h }
= -\langle \nabla_{\Gamma} (\bbI + \tau \widetilde \Bw_h^{m+1}), \nabla_{\Gamma} \bm{\eta} \rangle_{\Gamma_{F,h}^m} 
+ \langle \varphi_{c,h}^{m+1} \Bn_s, \bm{\eta} \rangle^{ h }_{\Gamma_{c,h}^m} \notag\\
&\hspace{50pt}
+ \int_{\Gamma_{c,h}^m}
\left[
\cos \theta_Y \bm{\mu}_s
- W\!e\,\beta_c (\widetilde{\Bw}_h^{m+1}\cdot\bm{\mu}_s) \bm{\mu}_s
+ \sin\theta_{c,h}^{m}\,\Bn_s
\right]\cdot \bm{\eta}\,\Rd s, 
\label{BGN-a-old} \\
&\langle \widetilde \Bw_h^{m+1} \cdot \Bn_{F,h}^m, \phi_F \rangle^{ h }_{\Gamma_{F,h}^m}
=
\langle \Bu_h^{m+1} \cdot \Bn_{F,h}^m, \phi_F \rangle_{\Gamma_{F,h}^m},
\label{BGN-b-old}
\end{align}
\end{subequations}
for all \((\Bv, q, \lambda_s, \phi_F, \bm{\eta}, \phi_c) \in 
\mathcal{U}_h^m \times \mathcal{Q}_h^m\times S_{h,2}(\Gamma_{s,h}^m) \times S_h(\Gamma_{F,h}^m) \times S_h(\Gamma_{F,h}^m)^d \times S_h(\Gamma_{c,h}^m)\),
where $\theta_{c,h}^m$ in \eqref{BGN-a-old} is defined by 
\begin{equation}\label{eq:def_theta_ch}
\theta_{c,h}^m
=
\arccos\!\left(\Pi_{[-1,1]}(c_{c,h}^m)\right),
\qquad
\Pi_{[-1,1]}(r):=\min\{1,\max\{-1,r\}\},
\end{equation}
with $c_{c,h}^m
:=
\mathcal{I}_{h,0}\left( \cos\theta_Y
-
{W\!e}\,\beta_c
\widetilde{\Bw}_h^m\cdot\bm{\mu}_s \right)$, where $\mathcal I_{h,0}$ denotes the elementwise midpoint interpolation operator onto the space of piecewise constant functions on $\Gamma_{c,h}^m$.
For the two-dimensional case, the conormals $\bm \mu_s$ at the contact points $\Gamma_c$ are constants, such as $\bm \mu_s = (-1, 0)^\top$ for the left contact point and $\bm \mu_s = (1, 0)^\top$ for the right contact point, see the left subfigure of Figure~\ref{fig:domain}.

In order to analyze the gravitational contribution in the scheme \eqref{BGN-old},
we introduce the intermediate ALE configurations between
$\Omega_h^m$ and $\Omega_h^{m+1}$, which will be used to represent
the moving-domain contributions.
For \(\vartheta\in[0,1]\), define the intermediate ALE flow map,
\[
\psi_h^m(\vartheta):=\bbI+\tau\vartheta\Bw_h^{m+1}.
\]
The corresponding intermediate domain and boundary are
\[
\Omega_h^m(\vartheta):=\psi_h^m(\vartheta)(\Omega_h^m),
\qquad
\Gamma_h^m(\vartheta):=\psi_h^m(\vartheta)(\Gamma_h^m).
\]
Restricting $\psi_h^m(\vartheta)$ to the free boundary $\Gamma^m_{F,h}$ gives the parametrization of $\Gamma^{m}_{F,h}(\vartheta)$:
\begin{align}\label{eq:X_2d}
\BX_{F,h}^m(\vartheta)
:=
\bbI_{\Gamma_{F,h}^m}
+
\tau\vartheta\widetilde{\Bw}_h^{m+1},
\qquad
\vartheta\in[0,1],
\end{align}
where
\(
\widetilde{\Bw}_h^{m+1}
=
\Bw_h^{m+1}|_{\Gamma_{F,h}^m}
\)
is the mesh velocity on the discrete free boundary. 
Further, the height of the intermediate discrete free boundary $\Gamma_{F,h}^m(\vartheta)$ is denoted as 
\begin{equation}\label{eq:X_2d_z}
z^m_h(\vartheta) := \BX_{F,h}^m(\vartheta) \cdot \Be_z.
\end{equation}
We emphasize that $z_h^m(\vartheta)$ is defined on $\Gamma_{F,h}^m$, and satisfies
\begin{align}\label{def:zdom}
z^m_h(\vartheta) = z\circ \BX_{F,h}^m(\vartheta) .
\end{align}
In two dimensions, let \(s\) denote the arc-length parameter on \(\Gamma_{F,h}^m\). The pulled-back tangent vector on the intermediate free boundary is
\begin{align}\label{eq:hat_t_2d}
\hat{\Bt}_{F,h}(\vartheta)
=
\partial_s\BX_{F,h}^m(\vartheta)
=
\Bt_{F,h}^m
+
\vartheta\tau\,\partial_s\widetilde{\Bw}_h^{m+1},
\end{align}
where $\Bt_{F,h}^m$ is the tangent vector on \(\Gamma_{F,h}^m\) such that $(\Bt_{F,h}^m)^\perp = \Bn_{F,h}^m$.
By rotating this tangent vector by \(90^\circ\), we define the pulled-back unnormalized outward normal vector 
\begin{align}\label{eq:hat_n_2d}
\hat{\Bn}_{F,h}(\vartheta) := \bigl(\hat{\Bt}_{F,h}(\vartheta)\bigr)^\perp 
= \Bn_{F,h}^m + \vartheta\tau \bigl(\partial_s\widetilde{\Bw}_h^{m+1}\bigr)^\perp . 
\end{align} 
Here the rotation is chosen so that \(\hat{\Bn}_{F,h}(0) = \Bn_{F,h}^m\) and \(\hat{\Bn}_{F,h}(\vartheta)\) is linear in \(\vartheta\).

With these intermediate configurations at hand, 
we first recall the continuous balance between the gravitational work and the variation of the gravitational potential energy, and then show that this balance is not exactly preserved by the direct discretization.
\begin{align}\label{dEc}\notag
\frac{\Rd}{\Rd t}E_{\rm pot}(t)
&=
g\frac{\Rd}{\Rd t}\int_{\Omega(t)}z\,\Rd x
=
g\int_{\partial\Omega(t)}
z\,\Bu\cdot\Bn\,\Rd s
\\
&
= g\int_{\Omega(t)}
\nabla\cdot(z\Bu)\,\Rd x
= g\int_{\Omega(t)}
\Bu\cdot\nabla z\,\Rd x
= g\int_{\Omega(t)}
\Bu\cdot \boldsymbol{e}_z\,\Rd x.
\end{align}
Here, we have used 
$\Bu\cdot\Bn_s=0$
on $\Gamma_s(t)$,
and the incompressibility condition $\nabla\cdot\Bu=0$.
Thus, we have
\begin{align*}
E_{\textrm{pot}}(t_{m+1}) - E_{\textrm{pot}}(t_{m}) =
\int_{t_m}^{t_{m+1}} g 
\int_{\Omega(t)} \Bu\cdot \boldsymbol{e}_z\,\Rd x \Rd t.
\end{align*}

However, the analogous property cannot be preserved by the direct ALE--FEM scheme \eqref{BGN-old} at the discrete level.
In other words, we can show 
\begin{align}\label{notequal}
    E_{\textrm{pot}}^{m+1}
    -E_{\textrm{pot}}^{m}\not=
    \tau g\int_{\Omega_h^m}
\Bu_h^{m+1}\cdot\Be_z\,\Rd x,
\end{align}
where the right hand side is the gravitational contribution obtained by testing $\Bv_h=\Bu_h^{m+1}$ in \eqref{BGN-fluid-old} and the discrete gravitational potential energy is defined by
\[
E_{\rm pot}^{m}
:=
g\int_{\Omega_h^m} z\,\Rd x.
\]
The exact discrete potential-energy variation induced by the ALE update is
\begin{align}
E_{\rm pot}^{m+1} - E_{\rm pot}^m 
&= g \int_{\Omega_h^{m+1}} z \,\Rd x - g \int_{\Omega_h^m} z \,\Rd x  
= g \int_{0}^{1} \frac{d}{d\vartheta}\int_{\Omega_h^{m}(\vartheta)} z \,\Rd x \, d\vartheta \notag \\
&= \tau g \int_{0}^{1} \int_{\Gamma_{F,h}^{m}(\vartheta)} z\big[ 
(\widetilde{\Bw}_h^{m+1})\circ {(\bm{X}_{F,h}^{m}(\vartheta) )}^{-1} \cdot \Bn_{F,h}^m(\vartheta) \big] \,\Rd s(\vartheta) \, d\vartheta \notag \\
&= \tau g \int_{\Gamma_{F,h}^{m}}  
\Big[\int_{0}^{1} z_h^m(\vartheta) 
\hat{\Bn}_{F,h}(\vartheta) d\vartheta\Big]\,\cdot \widetilde{\Bw}_h^{m+1}  \Rd s,
\label{eq:exact_E_pot}
\end{align}
where the last equality follows from $z_h^m(\vartheta) = z\circ \bm{X}_{F,h}^{m}(\vartheta)$ in \eqref{def:zdom} and the change of variables below. In general, for any scalar function $f$ defined on $\Gamma_{F,h}^{m}(\vartheta)$, we have
\begin{align*}
\int_{\Gamma_{F,h}^{m}(\vartheta)} 
f \Bn^m_{F,h}(\vartheta) \Rd s(\vartheta)
&= \int_{\Gamma_{F,h}^{m}}
[f \circ \bm{X}_{F,h}^{m}(\vartheta) ]
\frac{\hat{\Bn}_{F,h}(\vartheta)}{| \hat \Bt_{F,h}(\vartheta)|} |\partial_s \BX_{F,h}^{m}(\vartheta)| \Rd s\\
&= \int_{\Gamma_{F,h}^{m}} [f \circ \bm{X}_{F,h}^{m}(\vartheta) ]
\hat{\Bn}_{F,h}(\vartheta) \Rd s. 
\end{align*}

By contrast, the gravitational work entering the direct body-force scheme \eqref{BGN-old} is
\begin{align}
&\tau g\int_{\Omega_h^m}
\Bu_h^{m+1}\cdot \Be_z\,\Rd x =
\tau g
\int_{\Omega_h^{m}} \nabla z\cdot \Bu_h^{m+1}\,\Rd x
\notag\\
&\qquad=
\tau g
\int_{\Gamma_{F,h}^m}
z(\Bu_h^{m+1}\cdot\Bn_{F,h}^m)\,\Rd s
+
\tau g
\int_{\Gamma_{s,h}^m}
z(\Bu_h^{m+1}\cdot\Bn_s)\,\Rd s
-
\tau g
\int_{\Omega_h^m}
z\nabla\cdot\Bu_h^{m+1}\,\Rd x
\notag\\
&\qquad=
\tau g
\langle
\widetilde{\Bw}_h^{m+1}\cdot
\Bn_{F,h}^m,z
\rangle_{\Gamma_{F,h}^m}^h
= \tau g
\int_{\Gamma_{F,h}^m}
z(\widetilde{\Bw}_h^{m+1}\cdot\Bn_{F,h}^m)\,\Rd s + \mathcal{O}(\tau h^2), \label{eq:inexact_E_pot}
\end{align}
where we have used \eqref{BGN-fluid-old} and \eqref{BGN-b-old} in the last two equalities.
Comparing \eqref{eq:exact_E_pot} and \eqref{eq:inexact_E_pot}, we observe that there is an inconsistency in discrete energy dissipation
\begin{align}
\mathcal{R}_g^{m+1}:&= E_{\rm pot}^{m+1} - E_{\rm pot}^m - \tau g\int_{\Omega_h^m}
\Bu_h^{m+1}\cdot \Be_z\,\Rd x \notag \\
& =
\tau g
\int_{\Gamma_{F,h}^m}
\widetilde{\Bw}_h^{m+1}\cdot
\left(
\int_0^1 z_h(\vartheta)\hat{\Bn}_{F,h}(\vartheta)\,\Rd\vartheta
-
z\Bn_{F,h}^m
\right)\,\Rd s+ \mathcal{O}(\tau h^2)
\notag\\
&\approx \mathcal{O}(\tau^2+\tau h^2).
\label{eq:gravity-residual-direct}
\end{align}
Although the local consistency residual
\(
\mathcal R_g^{m+1}
=
\mathcal O(\tau^2+\tau h^2)
\)
is small for sufficiently small $\tau$ and $h$, it has no definite sign and does not represent any physical dissipation. 
Consequently, the direct treatment of gravity does not yield a closed discrete energy-dissipation law for the total energy including $E_{\rm pot}^m$. 
In long-time simulations, especially near equilibrium where the true energy variation is already very small, the accumulated effect of \(\mathcal{R}_g^{m+1}\) may become comparable to the physical dissipation. 
As a result, the scheme may produce artificial spurious velocities near equilibrium.

To overcome this issue, we propose a gravitationally consistent scheme in the next section, which preserves the correct discrete energy structure over long times.

\subsection{Two-dimensional gravitationally consistent scheme}
\label{sec:BGN-2d}
Our main idea is to find a better discretization of the work of gravitational force $-g(\boldsymbol{e}_z,\boldsymbol{v})_{\Omega(t)}$ in \eqref{eq:weak_momentum}
than $-g(\boldsymbol{e}_z,\boldsymbol{v})_{\Omega^m_h}$ in \eqref{BGN-fluid-old} so that the
discrete gravitational work is consistent with the change of gravitational potential energy.
We begin with the following reformulation of the continuous problem. By integration by parts, we obtain
\begin{align}\notag
g(\boldsymbol{e}_z,\boldsymbol{v})_{\Omega(t)}
&= g\int_{ \Omega(t)} \nabla z \cdot \Bv \,\Rd x\\
&=
g
\int_{\Gamma_{F}(t)} z 
\Bv\cdot \Bn_{F}\,\Rd s
+g
\int_{\Gamma_{s}(t)} z \Bv\cdot \Bn_{s}\,\Rd s
-g
\int_{ \Omega(t)} z\nabla\cdot\Bv\,\Rd x.
\label{eq:grav_z}
\end{align}
This inspires us to rewrite \eqref{eq:weak_momentum} as
\begin{align}
    &(\partial_t\Bu+(\Bu\cdot\nabla)\Bu,\Bv)
+
\frac{2}{Re}
(D(\Bu),D(\Bv))
-
(\tilde p,\nabla\cdot\Bv)
+(q,\nabla\cdot \Bu)
\notag\\
&- \langle \tilde \kappa_s \Bn_s, \Bv \rangle_{\Gamma_s(t)}
+ \langle \lambda_s \Bn_s, \Bu \rangle_{\Gamma_s(t)}
+ \beta_s
\langle
\mathbb P_s\Bu,
\mathbb P_s\Bv
\rangle_{\Gamma_s(t)}
+ \langle \tilde H\Bn_F,\Bv\rangle_{\Gamma_F(t)}
= 0, \label{eq:weak_momentum_grav}
\end{align}
where $\tilde p$ is the generalized dynamic pressure that absorbs the hydrostatic contribution:
\begin{align}
    \tilde p = p+gz,\quad 
    \tilde \kappa_s = \kappa - gz, \quad
    \tilde H = H/W\!e + gz.
\end{align}
Further, \eqref{eq:meanH} leads to
\begin{subequations}\label{eq:meanH_grav}
\begin{align}
-\langle\tilde H\Bn_F,\bm{\eta}\rangle_{\Gamma_F(t)}
&\label{eq:meanH_grav_1}
= -\langle
gz\boldsymbol{n}_F,\boldsymbol{\eta}
\rangle_{\Gamma_F(t)}
-\frac1{W\!e}\langle\nabla_\Gamma\mathrm{id},\nabla_\Gamma\bm{\eta}\rangle_{\Gamma_F(t)}\\
& + \frac1{W\!e}
\langle
\cos\theta_Y\,\bm{\mu}_s
+\sin\theta_c\,\Bn_s
-W\!e\,\beta_c(\Bu\cdot\bm{\mu}_s)\bm{\mu}_s,
\bm{\eta}
\rangle_{\Gamma_c(t)}. \label{eq:meanH_grav_2}
\end{align}
\end{subequations}
The next idea is to seek an appropriate approximation of $\langle
gz\boldsymbol{n}_F,\boldsymbol{\eta}
\rangle_{\Gamma_F(t)}$ in \eqref{eq:meanH_grav_1}, namely, approximating $z\boldsymbol{n}_F$ by some $\mathcal{G}$ such that when testing $\boldsymbol{\eta}$ as the ALE velocity, we recover the difference of potential energy at the discrete level:
\begin{align}
\tau  \langle
g \mathcal{G}, \widetilde \Bw^{m+1}_h
\rangle_{\Gamma_{F,h}^m} = E^{m+1}_{\textrm{pot}}
- E^{m}_{\textrm{pot}}.
\end{align}
To achieve this, we are inspired by \eqref{eq:exact_E_pot} to take 
\begin{align}\label{2dMot}
    \mathcal{G}= \int_{0}^{1} z_h^m(\vartheta) 
\hat{\Bn}_{F,h}(\vartheta) d\vartheta.
\end{align}
Note that $z_h^m(\vartheta)$ and $\hat{\Bn}_{F,h}(\vartheta)$ are both linear functions of $\vartheta$ in two dimensions.
Therefore, we can evaluate the exact time integral analytically by applying Simpson's quadrature rule:
\begin{align}\label{def:generalG}
\mathcal{G}= \frac{1}{6}z_h^m \hBn_{F,h}^m + \frac{2}{3}z_h^{m+1/2} \hBn_{F,h}^{m+1/2} + \frac{1}{6}z_h^{m+1} \hBn_{F,h}^{m+1},
\end{align}
with notations 
\begin{align}
&z_h^{m+1/2} = z_h^m + \frac{\tau}{2} \widetilde{\Bw}_h^{m+1}\cdot\Be_z, \label{hZ_1/2} \\
&\hBn_{F,h}^m = \hBn_{F,h}(0) = \Bn^m_{F,h},  \label{hBn_0} \\
&\hBn_{F,h}^{m+1/2} = \hBn_{F,h}(\frac{1}{2}) = \Bn^m_{F,h} + \frac{\tau}{2} (\partial_s \widetilde{\Bw}_h^{m+1})^\perp,  \label{hBn_1/2} \\
&\hBn_{F,h}^{m+1} = \hBn_{F,h}(1) = \Bn^m_{F,h} + 
\tau (\partial_s \widetilde{\Bw}_h^{m+1})^\perp.
\label{hBn_1}
\end{align}
According to \eqref{hZ_1/2}--\eqref{hBn_1}, $\mathcal{G}$ is a nonlinear function of $\tilde{\boldsymbol{w}}_h^{m+1}$.
Note that \eqref{hBn_0}--\eqref{hBn_1} only hold for the two-dimensional case. 
After substituting them into \eqref{def:generalG},
we denote the resulting $\mathcal{G}$ by $\mathcal{G}^{\textrm{2d}}$.
Therefore, we obtain the following structure-preserving BGN-type scheme: Given \(\Bu_h^m\), \(\Bw_h^m\) and domain \(\Omega_h^m\), we seek: 
\[(\Bu_h^{m+1}, \Tp_h^{m+1}, \kappa_{s,h}^{m+1}, \TH_h^{m+1}, \widetilde \Bw_h^{m+1}, \varphi_{c,h}^{m+1})\] 
in 
\(\mathcal{U}_h^m \times \mathcal{Q}_h^m\times S_{h,2}(\Gamma_{s,h}^m) \times S_h(\Gamma_{F,h}^m) \times  S_h(\Gamma_{F,h}^m)^d \times S_h(\Gamma_{c,h}^m)\) satisfying
\begin{subequations}\label{BGN-2d}
\begin{align}
&\frac{1}{2 \tau}( \Bu_h^{m+1}, \Bv)_{\Omega_h^{m}} 
+ \frac{1}{2 \tau}( \Bu_h^{m+1}\circ \psi_h^m , \Bv \circ \psi_h^m )_{\Omega_h^{m-1}} 
+ b\left( \Bu_h^m - \Bw_h^m; \Bu_h^{m+1}\circ \psi_h^m, \Bv \circ \psi_h^m\right)_{\Omega_h^{m-1}}\notag \\
&\qquad + (\frac{2}{Re} D( \Bu_h^{m+1}), D(\Bv))_{\Omega_h^{m}}   
- (\Tp_h^{m+1}, \nabla \cdot \Bv)_{\Omega_h^{m}}  
+ (q, \nabla \cdot \Bu_h^{m+1})_{\Omega_h^{m}} + \langle \TH_h^{m+1} \Bn_{F,h}^m, \Bv \rangle_{\Gamma^{m}_{F, h}} \notag \\
&\qquad 
- \langle \kappa_{s, h}^{m+1} \Bn_s, \Bv \rangle_{\Gamma^{m}_{s,h}}^{ s, h } 
+ \langle \lambda_s \Bn_s, \Bu_h^{m+1} \rangle_{\Gamma^{m}_{s, h}}^{ s, h } 
+ \beta_s \langle \mathbb P_s\Bu_h^{m+1},\mathbb P_s\Bv\rangle_{\Gamma_{s,h}^m} 
+ \langle \widetilde \Bw_h^{m+1} \cdot \Bn_s, \phi_c \rangle^{ h }_{\Gamma_{c,h}^m} \notag \\
&\qquad 
= \frac{1}{\tau}(\Bu_h^{m}, \Bv\circ \psi_h^m )_{\Omega_h^{m-1}}, \label{BGN-fluid} \\
&-\langle \TH_h^{m+1} \hBn_{F,h}^{m+1/2}, \bm{\eta} \rangle_{\Gamma_{F,h}^m}^{ h }
= -\frac{1}{W\!e}\langle \nabla_{\Gamma} (\bbI + \tau \widetilde \Bw_h^{m+1}), \nabla_{\Gamma} \bm{\eta} \rangle_{\Gamma_{F,h}^m} + \frac{1}{W\!e}\langle \varphi_{c,h}^{m+1} \Bn_s, \bm{\eta} \rangle^{ h }_{\Gamma_{c,h}^m} \notag \\
& \qquad
+ \frac{1}{W\!e} \int_{\Gamma_{c,h}^m}
\left[
\cos \theta_Y \bm{\mu}_s
- W\!e\,\beta_c (\widetilde{\Bw}_h^{m+1}\cdot\bm{\mu}_s) \bm{\mu}_s
+ \sin\theta_{c,h}^{m}\,\Bn_s
\right]\cdot \bm{\eta}\,\Rd s  \notag \\
& \qquad
- g \langle \mathcal{G}^{\rm 2d}(\widetilde{\Bw}_h^{m+1}) , \bm{\eta} \rangle_{\Gamma_{F,h}^m}, \label{BGN-2d-a} \\
& \langle \widetilde \Bw_h^{m+1} \cdot \hBn_{F,h}^{m+1/2}, \phi_F \rangle^{ h }_{\Gamma_{F,h}^m}
= \langle \Bu_h^{m+1} \cdot \Bn_{F,h}^m, \phi_F \rangle_{\Gamma_{F,h}^m}, \label{BGN-2d-b} 
\end{align}
\end{subequations}
for all \((\Bv, q, \lambda_s, \phi_F, \bm{\eta}, \phi_c) \in \mathcal{U}_h^m \times \mathcal{Q}_h^m\times S_{h,2}(\Gamma_{s,h}^m) \times S_h(\Gamma_{F,h}^m) \times S_h(\Gamma_{F,h}^m)^d \times S_h(\Gamma_{c,h}^m)\). 
In the above scheme, inspired by the structure-preserving parametric FEM \cite{Bao2021, Jiang2021},
we introduce \(\hBn_{F,h}^{m+1/2} \) defined in \eqref{hBn_1/2} as the equivalent intermediate normal vector.

\subsection{Three-dimensional gravitationally consistent scheme}
\label{sec:BGN-3d}
The reformulation \eqref{eq:weak_momentum_grav}--\eqref{eq:meanH_grav}
remains valid in three dimensions. 
The main difference from the two-dimensional case lies in the construction of the geometric
quantity $\mathcal G$.
For the three-dimensional case, the free boundary \(\Gamma_{F,h}^m\) is a surface. 
Let \(s_1\) and \(s_2\) denote two local surface parameters on \(\Gamma_{F,h}^m\). 
The pulled-back tangential vectors on the intermediate surface are given by
\[
\hat{\Bt}_{F,1}(\vartheta)
:=
\partial_{s_1}\bm{X}_{F,h}^{m}(\vartheta)
=
\Bt_{F,1}^m+\vartheta\tau\,\partial_{s_1}\widetilde{\Bw}_h^{m+1},
\quad
\hat{\Bt}_{F,2}(\vartheta)
:=
\partial_{s_2}\bm{X}_{F,h}^{m}(\vartheta)
=
\Bt_{F,2}^m+\vartheta\tau\,\partial_{s_2}\widetilde{\Bw}_h^{m+1}.
\]
Taking an appropriate ordering of the tangential vectors, we define
the pulled-back unnormalized outward surface vector by
\[
\hat{\Bn}_{F,h}(\vartheta)
:=
\hat{\Bt}_{F,1}(\vartheta)\times \hat{\Bt}_{F,2}(\vartheta).
\]

The gravitational potential-energy difference can then be written as
\begin{equation}
\label{eq:energy_change_3d}
E_{\rm pot}^{m+1}-E_{\rm pot}^m
=
\tau g
\int_{\Gamma_{F,h}^m}
\widetilde{\Bw}_h^{m+1}\cdot
\left(
\int_0^1
z_h^m(\vartheta)
\widehat{\Bn}_{F,h}(\vartheta)
\,\Rd\vartheta
\right)
\,\Rd s .
\end{equation}
Motivated by the two-dimensional construction in \eqref{2dMot}, we define
\begin{equation}
\label{eq:G3d_definition}
\mathcal G^{\rm 3d}
\bigl(\widetilde{\Bw}_h^{m+1}\bigr)
:=
\int_0^1
z_h^m(\vartheta)
\widehat{\Bn}_{F,h}(\vartheta)
\,\Rd\vartheta.
\end{equation}
Consequently,
\begin{equation}
\label{eq:G3d_energy_identity}
\tau g
\left\langle
\mathcal G^{\rm 3d}
\bigl(\widetilde{\Bw}_h^{m+1}\bigr),
\widetilde{\Bw}_h^{m+1}
\right\rangle_{\Gamma_{F,h}^m}
=
E_{\rm pot}^{m+1}-E_{\rm pot}^m.
\end{equation}

Since $\widehat{\Bn}_{F,h}(\vartheta)$ is quadratic in $\vartheta$, while $z_h^m(\vartheta)$ is linear in $\vartheta$. 
Hence their product is a polynomial of degree at most three, and Simpson's rule is exact:
\begin{align}
\mathcal G^{\rm 3d}
\bigl(\widetilde{\Bw}_h^{m+1}\bigr)
&=
\frac16 z_h^m\widehat{\Bn}_{F,h}^m
+
\frac23 z_h^{m+1/2}
\widehat{\Bn}_{F,h}^{m+1/2}
+
\frac16 z_h^{m+1}
\widehat{\Bn}_{F,h}^{m+1},
\label{eq:G3d_Simpson}
\end{align}
where
\begin{align}
&\hat{\Bn}_{F,h}^m 
= \hat{\Bn}_{F,h}(0)  = \Bn_{F,h}^m, \\
&\hat{\Bn}_{F,h}^{m+1/2} 
= \hat{\Bn}_{F,h}(\frac{1}{2}) 
= \Bn_{F,h}^m
+\frac{\tau}{2}
\bigl(
\partial_{s_1}\widetilde{\Bw}_h^{m+1}\times \Bt_{F,2}^m
+\Bt_{F,1}^m\times \partial_{s_2}\widetilde{\Bw}_h^{m+1}
\bigr) \notag \\
&\quad\qquad\qquad\qquad\qquad\qquad
+ \frac{\tau^2}{4}
\bigl(
\partial_{s_1}\widetilde{\Bw}_h^{m+1}\times \partial_{s_2}\widetilde{\Bw}_h^{m+1}
\bigr), \label{eq:n_m+1/2_3D} \\
&\hat{\Bn}_{F,h}^{m+1} 
= \hat{\Bn}_{F,h}(1)
= \Bn_{F,h}^m
+\tau
\bigl(
\partial_{s_1}\widetilde{\Bw}_h^{m+1}\times \Bt_{F,2}^m
+\Bt_{F,1}^m\times \partial_{s_2}\widetilde{\Bw}_h^{m+1}
\bigr) \notag \\
&\quad\qquad\qquad\qquad\qquad\qquad 
+\tau^2\bigl(
\partial_{s_1}\widetilde{\Bw}_h^{m+1}\times \partial_{s_2}\widetilde{\Bw}_h^{m+1}
\bigr). \label{eq:n_m+1_3D}
\end{align}

For the volume-conserving kinematic condition, we also introduce the
averaged pulled-back surface vector
\begin{equation}
\label{eq:averaged_normal_3d}
\widetilde{\Bn}_{F,h}^{m+1/2}
:=
\int_0^1
\widehat{\Bn}_{F,h}(\vartheta)\,\Rd\vartheta
=
\frac16\widehat{\Bn}_{F,h}^m
+
\frac23\widehat{\Bn}_{F,h}^{m+1/2}
+
\frac16\widehat{\Bn}_{F,h}^{m+1}.
\end{equation}
Since $\widehat{\Bn}_{F,h}(\vartheta)$ is quadratic in
$\vartheta$, the last equality is again exact.
Since $\widetilde\Bw_h^{m+1}$ is piecewise linear, 
$\widetilde{\Bn}_{F,h}^{m+1/2}$ and the related vectors 
$\widehat{\Bn}_{F,h}^m$, $\widehat{\Bn}_{F,h}^{m+1/2}$, $\widehat{\Bn}_{F,h}^{m+1}$ 
are elementwise constants.
Furthermore, following \cite{Hu2026_Droplet}, we employ the averaged
solid conormal introduced in \cite[(3.7)]{Bao2023}:
\begin{equation}
\label{conormal-3d}
\bm{\mu}_{s,h}^{m+1/2}
=
\frac12
\left[
\partial_s
\left(
\bbI\big|_{\Gamma_{c,h}^m}
\right)
+
\partial_s
\left(
\bbI\big|_{\Gamma_{c,h}^m}
+
\tau\widetilde{\Bw}_h^{m+1}
\big|_{\Gamma_{c,h}^m}
\right)
\right]
\times\Bn_s,
\end{equation}
where $s$ denotes the arc-length parameter along
$\Gamma_{c,h}^m$, and the orientation is chosen consistently with
the outward conormal direction.

In addition, we impose a tangential constraint on the mesh velocity along \(\Gamma_{c,h}^m\). 
More precisely, let \(\bm{\xi}_{c,h}^m\) denote the tangential direction on the discrete contact line. We require
\[
\widetilde{\Bw}_h^{m+1}\cdot \bm{\xi}_{c,h}^m = 0
\qquad \text{on } \Gamma_{c,h}^m,
\]
which is enforced weakly by introducing a Lagrange multiplier \(\psi_{c,h}^{m+1}\). 
This gives the additional weak terms
\[
\langle \widetilde{\Bw}_h^{m+1},\chi_c\bm{\xi}_{c,h}^m\rangle_{\Gamma_{c,h}^m}^{ h }
\quad\text{and}\quad
\langle \psi_{c,h}^{m+1}\bm{\xi}_{c,h}^m,\bm{\eta}\rangle_{\Gamma_{c,h}^m}^{ h }
\]
in the discrete formulation. This constraint prevents tangential mesh rotation at the contact line and helps preserve the mesh quality near the contact-line region.
With these definitions, the three-dimensional gravity-consistent ALE--FEM scheme is given as follows.

Given the fluid velocity \(\Bu_h^{m}\), mesh velocity \(\Bw_h^{m}\), flow map \(\psi_h^m\) defined on \(\Omega_h^{m-1}\), and discretized domain \(\Omega_h^{m}\) with boundary \(\Gamma_{F,h}^m\) at time \(t_m\), we seek: \[
(\Bu_h^{m+1}, \Tp_h^{m+1}, \kappa_{s,h}^{m+1},
\tilde{H}_h^{m+1}, \widetilde \Bw_h^{m+1}, \varphi_{c,h}^{m+1}, \psi_{c,h}^{m+1}) 
\] 
in 
\(
\mathcal{U}_h^m \times \mathcal{Q}_h^m\times S_{h,2}(\Gamma_{s,h}^m)
\times S_h(\Gamma_{F,h}^m) \times S_h(\Gamma_{F,h}^m)^d \times S_h(\Gamma_{c,h}^m) \times S_h(\Gamma_{c,h}^m)
\) 
satisfying
\begin{subequations}\label{BGN-3d}
\begin{align}
&\frac{1}{2 \tau}( \Bu_h^{m+1}, \Bv)_{\Omega_h^{m}} 
+ \frac{1}{2 \tau}( \Bu_h^{m+1}\circ \psi_h^m , \Bv \circ \psi_h^m )_{\Omega_h^{m-1}} 
+ b\!\left( \Bu_h^m - \Bw_h^m; \Bu_h^{m+1}\circ \psi_h^m, \Bv \circ \psi_h^m\right)_{\Omega_h^{m-1}}\notag \\
&\quad + (\frac{2}{Re} D( \Bu_h^{m+1}), D(\Bv))_{\Omega_h^{m}} - (\Tp_h^{m+1}, \nabla \cdot \Bv)_{\Omega_h^{m}} + (q, \nabla \cdot \Bu_h^{m+1})_{\Omega_h^{m}} 
+ \langle \tilde{H}_h^{m+1} \Bn_{F,h}^m, \Bv \rangle_{\Gamma^{m}_{F, h}} \notag \\ 
&\quad 
- \langle \kappa_{s, h}^{m+1} \Bn_{s}, \Bv \rangle_{\Gamma^{m}_{s,h}}^{ s, h } 
+ \langle \lambda_s \Bn_{s}, \Bu_h^{m+1} \rangle_{\Gamma^{m}_{s, h}}^{ s, h } 
+ \beta_s \langle \mathbb P_s\Bu_h^{m+1},\mathbb P_s\Bv\rangle_{\Gamma_{s,h}^m} \notag \\
&\quad 
+ \frac{1}{W\!e}\langle \widetilde \Bw_h^{m+1}, \phi_c \Bn_{s} \rangle^{ h }_{\Gamma_{c,h}^m} 
+ \frac{1}{W\!e}\langle \widetilde \Bw_h^{m+1}, \chi_c \bm{\xi}_{c, h}^m \rangle^{ h }_{\Gamma_{c,h}^m}
= \frac{1}{\tau}(\Bu_h^{m}, \Bv\circ \psi_h^m )_{\Omega_h^{m-1}}, \label{BGN-3d-fluid} \\
& - \langle \tilde{H}_h^{m+1} \tilde{\Bn}_{F,h}^{m+1/2}, \bm{\eta} \rangle_{\Gamma_{F,h}^m}^{ h }
= - \frac{1}{W\!e} \langle \nabla_{\Gamma} (\bbI + \tau \widetilde \Bw_h^{m+1}), \nabla_{\Gamma} \bm{\eta} \rangle_{\Gamma_{F,h}^m} 
+ \frac{1}{W\!e} \langle \varphi_{c,h}^{m+1} \Bn_{s}, \bm{\eta} \rangle^{ h }_{\Gamma_{c,h}^m}\notag \\
& \quad
+ \frac{1}{W\!e} \int_{\Gamma_{c,h}^m}
\left[
\cos \theta_Y \bm{\mu}_{s,h}^{m+1/2}
- W\!e\,\beta_c (\widetilde{\Bw}_h^{m+1}\cdot\bm{\mu}_{s,h}^m) \bm{\mu}_{s,h}^m
+ \sin\theta_{c,h}^{m}\,\Bn_s
\right]\cdot \bm{\eta}\,\Rd s   \notag \\
& \quad
+ \frac{1}{W\!e}\langle \psi_{c,h}^{m+1} \bm{\xi}_{c, h}^m, \bm{\eta} \rangle^{ h }_{\Gamma_{c,h}^m} - g \langle \mathcal{G}^{\rm 3d}(\widetilde{\Bw}_h^{m+1}) , \bm{\eta} \rangle_{\Gamma_{F,h}^m}
\label{BGN-3d-a} \\
& \langle \widetilde \Bw_h^{m+1} \cdot \tilde{\Bn}_{F,h}^{m+1/2}, \phi_F \rangle^{ h }_{\Gamma_{F,h}^m}
= \langle \Bu_h^{m+1} \cdot \Bn_{F,h}^m, \phi_F \rangle_{\Gamma_{F,h}^m}, \label{BGN-3d-b} 
\end{align}
\end{subequations}
for all \((\Bv, q, \lambda_s, \phi_F, \bm{\eta}, \phi_c,\chi_c) \in \mathcal{U}_h^m \times \mathcal{Q}_h^m\times 
S_{h,2}(\Gamma_{s,h}^m) \times S_h(\Gamma_{F,h}^m) \times S_h(\Gamma_{F,h}^m)^d \times S_h(\Gamma_{c,h}^m)\times S_h(\Gamma_{c,h}^m)\).

\subsection{Mesh velocity in the bulk}
\label{sec:MDR}
Once the mesh velocity \(\widetilde{\Bw}\) on the free boundary \(\Gamma_{F}\) is determined, we extend it into the bulk by the minimal deformation rate (MDR) approach \cite{Hu2022,Hu2026_Droplet}. 
Although harmonic extension is simple and widely used, it may lead to mesh concentration and severe distortion near the moving contact line. 
In contrast, the MDR method also controls the mesh deformation along the solid boundary \(\Gamma_s(t)\), and therefore gives better mesh regularity both in the bulk and near the contact-line region.
More precisely, the bulk mesh velocity is defined as the minimizer of the functional
\begin{equation}\label{defE}
\mathcal{E}(\Bw)
:=
\frac{1}{2} \int_{\Omega(t)} |D(\Bw)|^2 \,\Rd x
+ \frac{1}{2} \int_{\Gamma_s(t)} |\nabla_{\Gamma} \Bw|^2 \,\Rd s,
\end{equation}
subject to the constraints
\[
\Bw=\widetilde{\Bw}\quad \text{on } \Gamma_F(t),
\qquad
\Bw\cdot \Bn_s=0\quad \text{on } \Gamma_s(t).
\]

Following \cite{Hu2026_Droplet}, at the discrete level we use the piecewise linear space
\[
\mathring{\Cw}_h^m
:=
\left\{
\bm{\zeta}\in \Cw_h^m:
\bm{\zeta}=\mathbf{0}\ \text{on the vertices of }\Gamma_{F,h}^m
\right\},
\]
which incorporates the homogeneous Dirichlet condition on the free boundary. 
Then, given the free-boundary mesh velocity \(\widetilde{\Bw}_h^{m+1}\), we compute the bulk mesh velocity \(\Bw_h^{m+1}\) by seeking
\[
(\Bw_h^{m+1},\varphi_{s,h}^{m+1})
\in
(\widetilde{\Bw}_h^{m+1}+\mathring{\Cw}_h^m)\times S_h(\Gamma_{s,h}^m)
\]
such that
\begin{subequations}\label{MDR}
\begin{align}
    (D (\Bw_h^{m+1}), D (\bm{\zeta}) )_{\Omega_h^m} 
    + \langle \nabla_\Gamma \Bw_h^{m+1}, \nabla_\Gamma \bm{\zeta} \rangle_{\Gamma_{s,h}^m}
    &= \langle \varphi_{s,h}^{m+1} \Bn_s, \bm{\zeta} \rangle_{\Gamma_{s,h}^m}^{ h },
    \label{MDR-a}\\
    \langle \Bw_h^{m+1} \cdot \Bn_s, \phi_s \rangle_{\Gamma_{s,h}^m}^{ h }
    &= 0,
    \label{MDR-c}
\end{align}
\end{subequations}
for all
\(
(\bm{\zeta},\phi_s)\in \mathring{\Cw}_h^m\times S_h(\Gamma_{s,h}^m).
\)

\section{Structure-preserving properties}
\label{sec:theorem}
In this section, we establish the main properties of the proposed ALE--FEM schemes. 
We first prove that the schemes preserve the discrete volume exactly and satisfy a discrete total energy-dissipation law, where the gravitational potential energy is included in the total energy. 
Then, based on this energy estimate and a Korn-type inequality, we further show that the discrete velocity vanishes in the long-time regime. 
This result indicates that the proposed method can eliminate gravity-induced spurious velocities.

\begin{theorem}\label{thm:structure_preserving}
Let $d\in\{2,3\}$, and let $(\Bu_h^{m+1}, \Tp_h^{m+1}, \kappa_{s,h}^{m+1}, \TH_h^{m+1}, \widetilde \Bw_h^{m+1}, \varphi_{c,h}^{m+1})$ be the solution of the 2D scheme \eqref{BGN-2d}
and let \( (\Bu_h^{m+1}, \Tp_h^{m+1}, \kappa_{s,h}^{m+1}, \TH_h^{m+1}, \widetilde \Bw_h^{m+1}, \varphi_{c,h}^{m+1}, \psi_{c,h}^{m+1}) \) be the solution of the 3D scheme \eqref{BGN-3d}. 
Then, the proposed schemes are strictly structure-preserving in the following sense:
\begin{itemize}
\item[\textbf{(i)}] \textbf{Volume Conservation:} The total volume of the fluid domain is rigorously conserved at the discrete level, i.e., 
\( |\Omega_h^{m+1}| = |\Omega_h^m| \).
\item[\textbf{(ii)}] \textbf{Total Energy Dissipation:} 
The discrete total energy associated with the numerical solution is defined as
\begin{align}
\label{eq:def_total_energy}
E_h^{m} := \frac{1}{2} \|\Bu^{m}_h \|^2_{L^2(\Omega_h^{m-1})} 
+ \frac{1}{W\!e} |\Gamma_{F, h}^{m}| 
- \frac{\cos\theta_Y}{W\!e} |\Gamma_{s, h}^{m}|
+ g\int_{\Omega_h^{m}} z \Rd x.
\end{align}
Then the discrete total energy decreases monotonically over time, satisfying:
\begin{align}\label{eq:energy_dissipation}
E_h^{m+1} - E_h^{m} 
&\leq -\frac{1}{2} \| \Bu_h^{m+1} \circ \psi_h^m - \Bu_h^m \|^2_{\Omega_h^{m-1}} 
- \frac{2\tau}{Re} \| D(\Bu_h^{m+1}) \|^2_{\Omega_h^m} \notag \\
&\quad - \tau \beta_s \| \bbP_s \Bu_h^{m+1} \|^2_{\Gamma_{s,h}^m}
-\tau \beta_c \|\widetilde \Bw_h^{m+1} \cdot \bm \mu_{s,h}^m \|^2_{\Gamma_{c,h}^m} \leq 0, 
\quad m \ge 0.
\end{align}
\end{itemize}
Here $|\Omega_h^m|$ denotes the $d$-dimensional Lebesgue measure, whereas $|\Gamma_{F,h}^m|$ and $|\Gamma_{s,h}^m|$ denote the $(d-1)$-dimensional Hausdorff measures. 
\end{theorem}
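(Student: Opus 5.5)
Volume conservation (i) follows from the same homotopy as the potential energy. Differentiate $|\Omega_h^m(\vartheta)|$ in $\vartheta$ and change variables to $\Gamma_{F,h}^m$ as in \eqref{eq:exact_E_pot}, with $z$ replaced by $1$. The solid part contributes nothing because $\Bw_h^{m+1}\cdot\Bn_s=0$ on $\Gamma_{s,h}^m$. This constraint is enforced at vertices by \eqref{MDR-c}, and since the solid wall is flat and $\Bw_h^{m+1}$ is piecewise linear, it holds everywhere there. We obtain
\[
|\Omega_h^{m+1}|-|\Omega_h^m|=\tau\int_{\Gamma_{F,h}^m}\widetilde\Bw_h^{m+1}\cdot\Big(\int_0^1\hat\Bn_{F,h}(\vartheta)\,\Rd\vartheta\Big)\Rd s .
\]
In 2D the inner integral equals $\hBn_{F,h}^{m+1/2}$, by linearity in $\vartheta$. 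In 3D it equals $\widetilde\Bn_{F,h}^{m+1/2}$, by Simpson's rule applied to a quadratic. These are elementwise constant and $\widetilde\Bw_h^{m+1}$ is piecewise linear, so the integrand is piecewise linear and the mass-lumped product $\langle\cdot,1\rangle^h$ is exact for it. Taking $\phi_F=1$ in \eqref{BGN-2d-b} or \eqref{BGN-3d-b} gives $\langle \Bu_h^{m+1}\cdot\Bn_{F,h}^m,1\rangle_{\Gamma_{F,h}^m}$. By the divergence theorem on $\Omega_h^m$, this equals $\int_{\Omega_h^m}\nabla\cdot\Bu_h^{m+1}$ minus the solid flux. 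The solid flux vanishes by testing with $\lambda_s$, since $\Bu_h^{m+1}\cdot\Bn_s$ is quadratic and zero at all $P_2$ nodes. The divergence integral vanishes by taking $q=1$.

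For (ii) we test \eqref{BGN-fluid} (resp. \eqref{BGN-3d-fluid}) with $\Bv=\Bu_h^{m+1}$, $q=\Tp_h^{m+1}$, $\lambda_s=\kappa_{s,h}^{m+1}$, and $\phi_c=\varphi_{c,h}^{m+1}$ (and $\chi_c=\psi_{c,h}^{m+1}$). We then test \eqref{BGN-2d-a}/\eqref{BGN-3d-a} with $\bm\eta=\widetilde\Bw_h^{m+1}$, and the kinematic equation with $\phi_F=\TH_h^{m+1}$.

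\textbf{Time derivative and convection.} The skew form vanishes, and the identity $2a(a-b)=a^2-b^2+(a-b)^2$ on $\Omega_h^{m-1}$ gives
\[
\tfrac12\|\Bu_h^{m+1}\|^2_{\Omega_h^m}-\tfrac12\|\Bu_h^m\|^2_{\Omega_h^{m-1}}+\tfrac12\|\Bu_h^{m+1}\circ\psi_h^m-\Bu_h^m\|^2_{\Omega_h^{m-1}},
\]
after multiplying by $\tau$. This matches the kinetic part of $E_h^m$ evaluated on the shifted domain.

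\textbf{Coupling terms.} The pressure and multiplier terms cancel in pairs. The term $\langle \TH\Bn_F,\Bu\rangle$ equals $\langle\TH\hBn^{m+1/2},\widetilde\Bw\rangle^h$ by the kinematic equation. This in turn equals the left-hand side of the curvature equation tested with $\widetilde\Bw$, so the fluid and geometric equations couple.

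\textbf{Surface energy.} The term $\tfrac{\tau}{W\!e}\langle\nabla_\Gamma(\bbI+\tau\widetilde\Bw),\nabla_\Gamma\widetilde\Bw\rangle$ is bounded below by $\frac1{W\!e}(|\Gamma_{F,h}^{m+1}|-|\Gamma_{F,h}^m|)$, using the standard BGN inequality $\langle\nabla_\Gamma\BX^{m+1},\nabla_\Gamma(\BX^{m+1}-\bbI)\rangle\ge|\Gamma^{m+1}|-|\Gamma^m|$.

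\textbf{Contact-line terms.} The $\varphi_c$ and $\psi_c$ terms vanish by the constraints. The $\sin\theta_{c,h}^m\Bn_s$ term vanishes because $\widetilde\Bw\cdot\Bn_s=0$. The $\beta_c$ term gives dissipation. The $\cos\theta_Y$ term must produce $-\frac{\cos\theta_Y}{W\!e}(|\Gamma_{s,h}^{m+1}|-|\Gamma_{s,h}^m|)$. In 2D this is exact, because $\tau\widetilde\Bw\cdot\bm\mu_s$ is the displacement of each endpoint along the flat wall. In 3D the averaged conormal \eqref{conormal-3d} makes $\tau\int\widetilde\Bw\cdot\bm\mu^{m+1/2}_{s,h}$ exactly the area change of the planar polygon, via the trapezoidal/shoelace identity, as in \cite{Bao2023}.

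\textbf{Gravity.} The term is $\tau g\langle\mathcal G,\widetilde\Bw\rangle$. By \eqref{eq:exact_E_pot} and \eqref{eq:G3d_energy_identity} together with exact Simpson quadrature, it equals $E_{\rm pot}^{m+1}-E_{\rm pot}^m$.

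Summing these contributions yields \eqref{eq:energy_dissipation}, with the viscous and slip terms appearing directly. The main obstacle I expect is the bookkeeping of the mass-lumped versus exact inner products. Here the identity $\langle \TH\Bn_F,\Bu\rangle=\langle \Bu\cdot\Bn_F,\TH\rangle$ is needed, which holds since the kinematic right-hand side uses the exact inner product. The 3D wall-area identity is the other delicate point. I would first verify it edge by edge on a single contact-line segment.
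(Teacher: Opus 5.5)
Your proposal is correct and follows the paper's proof essentially step by step. For volume conservation you use the same ingredients: testing with $q=\phi_F=1$, killing the solid flux via the $P_2$ nodal impermeability constraint, and exact evaluation of $\int_0^1\hat{\Bn}_{F,h}(\vartheta)\,\Rd\vartheta$ combined with exactness of mass lumping on piecewise linears. For energy dissipation you use the same test functions, the kinematic coupling of $\langle \TH_h^{m+1}\Bn_{F,h}^m,\Bu_h^{m+1}\rangle$ to the curvature equation, the BGN area inequality, the averaged-conormal wall-area identity and the Simpson-exact gravity identity. Your additional details, namely the vanishing of the solid-wall contribution to the volume change via \eqref{MDR-c} and the shoelace sketch of the 3D wall-area identity, make explicit what the paper leaves implicit rather than changing the route.
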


For clarity, we prove for the three-dimensional scheme \eqref{BGN-3d}; the proof for the two-dimensional case is analogous and therefore omitted.
\begin{proof}
\textbf{Part I: Proof of Volume Conservation.} 
Since $\Bn_s$ is constant on the flat solid boundary and
$\Bu_h^{m+1}$ is piecewise quadratic, 
$\Bu_h^{m+1}\cdot\Bn_s\in S_{h,2}(\Gamma_{s,h}^m)$.
Taking $\lambda_s=\Bu_h^{m+1}\cdot\Bn_s$
in the discrete impermeability constraint in \eqref{BGN-3d-fluid} gives
\[
\left\langle
(\Bu_h^{m+1}\cdot\Bn_s)\Bn_s,\Bu_h^{m+1}
\right\rangle_{\Gamma_{s,h}^m}^{s,h}=0.
\]
Since all nodal weights in \eqref{eq:mass_lumping_nodal} are positive, this implies that
$\Bu_h^{m+1}\cdot\Bn_s$ vanishes at all quadratic Lagrange nodes, and hence vanishes identically on $\Gamma_{s,h}^m$.
Therefore
\begin{equation}\label{eq:u_ns_0}
\int_{\Gamma_{s,h}^m}
\Bu_h^{m+1}\cdot\Bn_s \,\Rd s
=0= \langle \Bu_h^{m+1}\cdot \Bn_s, 1 \rangle_{\Gamma_{s,h}^m}^{s,h}.
\end{equation}

By choosing \(\Bv = \mathbf{0}\), \(q = \phi_F = 1\), \(\phi_c = \chi_c = 0\) and \( \lambda_s = -1 \) in the weak formulation \eqref{BGN-3d}, we obtain, using the divergence theorem and \eqref{eq:u_ns_0},
\begin{align*}
0 
&= \int_{\Omega_h^m} \nabla \cdot \Bu_h^{m+1} \Rd x
+ \langle \tilde{\Bn}_{F,h}^{m+1/2} \cdot \widetilde{\Bw}_h^{m+1}, 1 \rangle_{\Gamma_{F,h}^m}^{h}
- \langle \Bn_{F,h}^m \cdot \Bu_h^{m+1}, 1 \rangle_{\Gamma_{F,h}^m}
- \langle \Bn_s \cdot \Bu_h^{m+1},1 \rangle_{\Gamma_{s,h}^m}^{s,h} \\
&= \langle \tilde{\Bn}_{F,h}^{m+1/2} \cdot \widetilde{\Bw}_h^{m+1}, 1 \rangle_{\Gamma_{F,h}^m}^{h}. \notag
\end{align*} 
By \eqref{eq:averaged_normal_3d},
$\widetilde{\Bn}_{F,h}^{m+1/2}$ is elementwise constant on
$\Gamma_{F,h}^m$, whereas
$\widetilde{\Bw}_h^{m+1}$ is piecewise linear. Hence
$\widetilde{\Bw}_h^{m+1}\cdot
\widetilde{\Bn}_{F,h}^{m+1/2}$ is piecewise linear.
Since the mass-lumped quadrature is exact for piecewise linear
functions, we have
\[
\int_{\Gamma_{F,h}^m}
\widetilde{\Bw}_h^{m+1}\cdot
\widetilde{\Bn}_{F,h}^{m+1/2}\,\Rd s
=
\int_{\Gamma_{F,h}^m}^h
\widetilde{\Bw}_h^{m+1}\cdot
\widetilde{\Bn}_{F,h}^{m+1/2}\,\Rd s
=0.
\]
According to \eqref{eq:exact_E_pot}, the volume change from $t_{m}$ to $t_{m+1}$ can be written as
\begin{align*}
|\Omega_h^{m+1}| - |\Omega_h^m| 
= \int_{0}^{1} \frac{\Rd}{\Rd \vartheta } \int_{\Omega_h^{m}(\vartheta)} 1 \Rd x \Rd \vartheta 
= \tau \int_{\Gamma_{F,h}^m} \widetilde{\Bw}_h^{m+1} \cdot \left( \int_0^1 \hBn_{F,h}(\vartheta) \Rd \vartheta \right) \Rd s . 
\end{align*}
Since $\hBn_{F,h}(\vartheta) = \hat{\Bt}_{F,1}(\vartheta)\times \hat{\Bt}_{F,2}(\vartheta) $ is a quadratic function of $\vartheta$, the integration can be exactly represented by the Simpson formula \eqref{eq:averaged_normal_3d} as $\int_0^1 \hBn_{F,h}(\vartheta) \, \Rd \vartheta 
= \tilde \Bn_{F,h}^{m+1/2}$.
Thus, the scheme is strictly volume preserving,
\begin{align}\label{eq:volume_relation}
|\Omega_h^{m+1}| - |\Omega_h^m|
= \tau \int_{\Gamma_{F,h}^m} \widetilde{\Bw}_h^{m+1} \cdot \tilde \Bn_{F,h}^{m+1/2} \Rd s
= 0.
\end{align}

\textbf{Part II: Proof of Energy Dissipation.} 
We test the fully discrete scheme \eqref{BGN-3d} with
\(\Bv = \Bu_h^{m+1}\), \(q = \Tp_h^{m+1}\), 
\(\lambda_s = \kappa_{s,h}^{m+1}\),  
\(\phi_F = \TH_h^{m+1} \), \(\bm{\eta} = \widetilde{\Bw}_h^{m+1}\), \(\phi_c = \varphi^{m+1}_{c,h}\) and \(\chi_c = \psi_{c,h}^{m+1} \).
Adding the resulting equations together, the pressure-incompressibility terms, the solid-boundary multiplier terms, and the contact-line constraint terms cancel. We obtain
\begin{align}\label{energy-inter}
&\frac1{2} \big[ \| \Bu_h^{m+1}\|_{ \Omega_h^{m}}^2
- \| \Bu_h^{m}\|_{ \Omega_h^{m-1}}^2 
+ \| \Bu_h^{m+1}\circ \psi_h^m - \Bu_h^{m}\|_{\Omega_h^{m-1}}^2 \big] 
+ \frac{2\tau}{Re} \| D(\Bu_h^{m+1}) \|^2_{\Omega_h^{m}} \notag\\
&\quad 
+ \tau \beta_s \| \bbP_s \Bu_h^{m+1}\|^2_{\Gamma_{s,h}^m} 
+ \frac{1}{W\!e} \langle \nabla_{\Gamma} (\bbI + \tau \widetilde \Bw_h^{m+1}), 
\nabla_{\Gamma} (\tau \widetilde \Bw_h^{m+1}) \rangle_{\Gamma_{F,h}^m}
\notag \\
&\quad 
- \frac{1}{W\!e} \int_{\Gamma_{c,h}^m} \left( \cos \theta_Y \bm \mu_{s,h}^{m+1/2} + \sin \theta_{c,h}^m\, \Bn_s \right) \cdot \tau \widetilde \Bw_h^{m+1} \Rd s 
+ \tau \beta_c \| \widetilde \Bw_h^{m+1} \cdot \bm \mu^m_{s,h} \|^2_{\Gamma^m_{c,h}} \notag \\
&\quad + \tau g\langle \mathcal{G}^{\rm 3d}(\widetilde{\Bw}_h^{m+1}) , \widetilde{\Bw}_h^{m+1} \rangle_{\Gamma_{F,h}^m}
= 0
\end{align}
where we used the identity
\begin{align*}
&\frac{1}{2 \tau} \| \Bu_h^{m+1}\|_{ \Omega_h^{m}}^2 
+ \frac{1}{2 \tau} \| \Bu_h^{m+1}\circ \psi_h^m - \Bu_h^{m} \|_{\Omega_h^{m-1}}^2
- \frac{1}{\tau}( \Bu_h^{m}, \Bu_h^{m+1}\circ \psi_h^m)_{\Omega_h^{m-1}} \\
&= \frac1{2 \tau} \left[ \| \Bu_h^{m+1}\|_{ \Omega_h^{m}}^2
- \| \Bu_h^{m}\|_{ \Omega_h^{m-1}}^2 
+ \| \Bu_h^{m+1}\circ \psi_h^m - \Bu_h^{m} \|_{\Omega_h^{m-1}}^2 \right].
\end{align*}

It remains to identify the geometric terms appearing in the energy estimate. 
Recall that \(\theta_{c,h}^m\) defined in \eqref{eq:def_theta_ch} is piecewise constant, \(\widetilde \Bw_h^{m+1}\) is piecewise linear and \(\Bn_s\) is constant on \(\Gamma_{c,h}^m\), 
by testing $\phi_c = 1$ 
in \eqref{BGN-3d-fluid} gives
\begin{align}
\int_{\Gamma_{c,h}^m}
\sin \theta_{c,h}^m\,\Bn_s\cdot \tau \widetilde \Bw_h^{m+1}\,\Rd s = 
\tau \langle \sin \theta_{c,h}^m  \widetilde \Bw_h^{m+1}, \Bn_{s} \rangle^{ h }_{\Gamma_{c,h}^m} 
=0.
\end{align}

The remaining surface and contact-line terms can be related to the variations of the free-surface area and the wetted solid area. 
For the free surface, we use the area-decreasing inequality \cite[(2.21)]{Barrett2007_BGN2},
\begin{align}\label{area-decreasing-property}
\int_{\Gamma_{F,h}^m}
\nabla_{\Gamma}(\mathrm{id}+\tau \widetilde \Bw_h^{m+1})
\cdot
\nabla_{\Gamma}\tau \widetilde \Bw_h^{m+1}
\,\Rd s
\geq
|\Gamma_{F,h}^{m+1}|-|\Gamma_{F,h}^m|.
\end{align}
For the wetted solid surface, the averaged conormal vector \(\mubf_{s,h}^{m+1/2}\) defined in \eqref{conormal-3d} yields the exact identity
\[
\int_{\Gamma_{c,h}^m}\cos \theta_Y\mubf_{s,h}^{m+1/2}
\cdot
\tau\widetilde{\Bw}_h^{m+1}
\,\Rd s
= \cos \theta_Y \left(
|\Gamma_{s,h}^{m+1}|-|\Gamma_{s,h}^{m}| \right).
\]
Thus, the geometric terms in the discrete weak formulation control precisely the changes of the free surface and the wetted solid surface.
Based on Simpson's rule and \eqref{eq:G3d_energy_identity}, 
the gravitational potential energy change becomes
\begin{align*}
\tau g \langle \mathcal{G}^{\rm 3d}(\widetilde \Bw_h^{m+1}) , \widetilde{\Bw}_h^{m+1} \rangle_{\Gamma_{F,h}^m} 
= E^{m+1}_{\rm pot} - E^{m}_{\rm pot}
= g\int_{\Omega_h^{m+1}} z \,\Rd x - g\int_{\Omega_h^m} z \,\Rd x .
\end{align*} 
Substituting these identities into \eqref{energy-inter}, we deduce the discrete energy inequality
\begin{align}
E_h^{m+1} - E_h^{m} 
&\leq -\frac{1}{2} \| \Bu_h^{m+1} \circ \psi_h^m - \Bu_h^m \|^2_{\Omega_h^{m-1}} 
- \frac{2\tau}{Re} \| D(\Bu_h^{m+1}) \|^2_{\Omega_h^m} \notag \\
&\quad - \tau \beta_s \| \bbP_s \Bu_h^{m+1} \|^2_{\Gamma_{s,h}^m}
-\tau \beta_c \|\widetilde \Bw_h^{m+1} \cdot \bm \mu_{s,h}^m \|^2_{\Gamma_{c,h}^m} \leq 0.
\end{align}
This completes the proof of energy dissipation in the three-dimensional case. 
\hfill\end{proof}

\begin{remark}\upshape
The above structure-preserving property only needs the mesh velocity on the free surface $\Gamma_{F,h}^m$, since the bulk mesh velocity does not influence it.
Therefore, by extending the mesh velocity $\widetilde{\Bw}_h^{m+1}$ along the free boundary, the MDR method does not destroy the discrete structure-preserving property proved in Theorem \ref{thm:structure_preserving}.
\end{remark}

Theorem~\ref{thm:structure_preserving} shows that the total energy always dissipates for the proposed scheme with gravity.
To study the long-time behavior of the numerical solution, we first establish a Korn-type inequality under the mixed boundary conditions. 
This auxiliary result is stated in the following lemma and will be used together with the energy estimate to show the decay of the discrete velocity.

\begin{lemma}[Korn's Inequality with Mixed Boundary]\label{lem:korn}
Let $\Omega \subset \mathbb{R}^d$ ($d=2,3$) be a bounded Lipschitz domain with its boundary partitioned as $\partial \Omega = \overline{\Gamma}_1 \cup \overline{\Gamma}_2$, where $\Gamma_1$ and $\Gamma_2$ are disjoint open subsets of $\partial \Omega$. 
Assume further that $|\Gamma_1| > 0$. 
Then, there exists a constant $C_1 > 0$,
such that
\[
\|\Bu\|^2_{L^2(\Omega)} 
\le C_1 \left( \|D(\Bu)\|^2_{L^2(\Omega)} 
+ \|\Bu\|^2_{L^2(\Gamma_1)} \right)  
\qquad \forall \Bu \in H^1(\Omega)^d ,
\]
where $D(\Bu) = \frac{1}{2}(\nabla \Bu + \nabla \Bu^\top)$ is the symmetric strain tensor.
\end{lemma}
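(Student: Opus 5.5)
We will argue by contradiction, using the standard compactness argument combined with the second Korn inequality. The ingredients are: (a) Korn's second inequality on bounded Lipschitz domains, \(\|\Bu\|_{H^1(\Omega)}^2\le C(\|\Bu\|_{L^2(\Omega)}^2+\|D(\Bu)\|_{L^2(\Omega)}^2)\) for all \(\Bu\in H^1(\Omega)^d\); (b) the Rellich--Kondrachov compact embedding \(H^1(\Omega)\hookrightarrow L^2(\Omega)\); (c) compactness of the trace operator \(H^1(\Omega)\to L^2(\partial\Omega)\) on Lipschitz domains; and (d) the characterization of the kernel of \(D\), namely the infinitesimal rigid motions \(\mathcal R=\{\Ba+B\Bx:\ B^\top=-B\}\).

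Suppose the inequality fails. Then there is a sequence \(\Bu_k\in H^1(\Omega)^d\) with \(\|\Bu_k\|_{L^2(\Omega)}=1\) and
\[
\|D(\Bu_k)\|_{L^2(\Omega)}^2+\|\Bu_k\|_{L^2(\Gamma_1)}^2<\frac1k .
\]
By (a), \((\Bu_k)\) is bounded in \(H^1(\Omega)^d\). We then extract a subsequence with \(\Bu_k\rightharpoonup\Bu\) weakly in \(H^1\) and strongly in \(L^2(\Omega)\), using (b). Since the traces converge strongly in \(L^2(\partial\Omega)\) by (c), we get \(\|\Bu\|_{L^2(\Omega)}=1\) and \(\Bu|_{\Gamma_1}=0\). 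Weak lower semicontinuity gives \(D(\Bu)=0\), so \(\Bu\in\mathcal R\) by (d).

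The remaining step is to show that a rigid motion \(\Ba+B\Bx\) vanishing on a set \(\Gamma_1\) of positive \((d-1)\)-measure must vanish identically; this contradicts \(\|\Bu\|_{L^2}=1\). We expect this to be the only genuinely delicate point. The zero set of an affine map \(\Bx\mapsto\Ba+B\Bx\) is an affine subspace. When \(B\neq0\), its dimension is \(d-\operatorname{rank}B\le d-2\), because a skew matrix has even rank. Such a set has zero \((d-1)\)-dimensional Hausdorff measure, so it cannot contain \(\Gamma_1\) up to a null set. Hence \(B=0\), and then \(\Ba=0\).

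Alternatively, one could cite the generalized Korn inequality, which states that \(\|\cdot\|_{L^2(\Omega)}\) is controlled by any seminorm that is continuous on \(H^1\) and nondegenerate on \(\mathcal R\). Here that seminorm would be \(\|\cdot\|_{L^2(\Gamma_1)}\), and it reduces to the same rigid-motion check.

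The constant \(C_1\) obtained this way depends on \(\Omega\) and \(\Gamma_1\). This dependence matters for the later long-time argument on the evolving domains \(\Omega_h^m\), but not for the lemma itself.
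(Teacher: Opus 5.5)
Your proposal is correct and follows essentially the same compactness-by-contradiction argument as the paper: you normalize in $L^2$, use Korn's second inequality to get an $H^1$ bound, extract a subsequence via Rellich and the compact trace, conclude $D(\Bu)=0$ and $\Bu|_{\Gamma_1}=0$, and then rule out nonzero rigid motions. The differences are minor: you pass to the limit by weak lower semicontinuity where the paper upgrades to strong $H^1$ convergence, and your even-rank argument for skew matrices gives a self-contained proof of the rigid-motion step, which the paper instead delegates to Zeidler's Lemma~62.16 (with a fixed-point remark in 2D).
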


\begin{proof}
Suppose that no such constant $C_1$ exists. 
Then there exists a sequence
\(\{\Bu_n\}_{n\ge 1}\) with
\(
\|\Bu_n\|_{L^2(\Omega)}^2=1,
\)
such that
\[ 
1 = \| \Bu_n \|_{L^2(\Omega)}^2 
\ge n \left( \|D(\Bu_n)\|_{L^2(\Omega)}^2 + \| \Bu_n\|_{L^2(\Gamma_1)}^2 \right). 
\]
We hence get the existence of a $C_2 > 0$ such that
\[ 
\| \Bu_n \|_{L^2(\Omega)}^2 
+ \|D(\Bu_n)\|_{L^2(\Omega)}^2
\le C_2.
\]
The coercivity of $D(\Bu)$ (see \cite[Lemma 62.20]{Zeidler1988IV}) gives the existence of a $C_3 > 0$ such that 
\[
\| \Bu_n \|_{H^{1}(\Omega)} \le C_3.
\]
As the embeddings $H^{1}(\Omega) \hookrightarrow L^2(\partial \Omega)$ and $H^{1}(\Omega) \hookrightarrow L^2(\Omega)$ are compact, we get for a subsequence $\{ \Bu_{n_k} \}$ there exists $\Bu \in H^{1}(\Omega)^d$ s.t. 
\begin{align}
\Bu_{n_k} \rightharpoonup \Bu \quad \text{in}\,\, H^{1}(\Omega),\quad 
\Bu_{n_k} \to \Bu \quad \text{in}\,\, L^2(\Omega), \quad
\Bu_{n_k} \to \Bu \quad \text{in}\,\, L^2(\partial\Omega). 
\end{align}

As $D(\Bu_n) \to 0$ in $L^2(\Omega)$ and $\Bu_n \to 0$ in $L^2(\Gamma_1)$, we obtain from
\cite[Lemma 62.20]{Zeidler1988IV}, 
\[
\Bu_{n_k} \to \Bu \quad \text{in}\,\, H^{1}(\Omega).
\]
By the continuity of the operator $D(\cdot)$ and the uniqueness of the limit, it follows that
\[
D(\Bu) = \mathbf{0}.
\]

Therefore, in the three-dimensional case,  $\nabla \Bu$ is strictly skew-symmetric, meaning $\partial_j u_i + \partial_i u_j = 0$. Differentiating this relation and permuting the indices yields $2\partial_k \partial_j u_i = 0$, which implies that all second-order derivatives of $\boldsymbol{u}$ vanish. Therefore,
\[ 
\Bu(\Bx) = \Ba + \Bb \times \Bx,\quad 
\text{and}\quad \int_{\Gamma_1} |\Bu|^2 \Rd s = 0, 
\]
where $\Ba, \Bb \in \bbR^3$ are constant vectors. 
This gives 
$\Ba + \Bb \times \Bx = \mathbf{0}$ 
on $\Gamma_1$.
Lemma 62.16 in \cite{Zeidler1988IV} gives $\Ba, \Bb = \mathbf{0}$ which implies that $\Bu = \mathbf{0}$.
This contradicts $\| \Bu \|_{L^2(\Omega)} = 1$, 
so there exists $C_1 > 0$, such that 
\[
\|\Bu\|^2_{L^2(\Omega)} \le C_1 \left( \|D(\Bu)\|^2_{L^2(\Omega)} 
+ \|\Bu\|^2_{L^2(\Gamma_1)} \right) 
\qquad \forall\, \Bu \in H^1(\Omega)^d .
\]

For the two-dimensional case, a rigid body motion can only vanish along a one-dimensional curve segment $\Gamma_1$ if it is identically zero, as any non-trivial rotation in 2D admits at most a single stationary point.
\hfill\end{proof}

\begin{theorem}\label{coro}
Assume that $\beta_s, \beta_c > 0$, that $E_h^m$ is bounded from below, and
that $|\Gamma_{s,h}^m|>0$ for all $m$. 
We further assume that the
family $\{\Omega_h^m\}_{m\geq0}$ admits a uniform Korn inequality
with a constant $C_\Omega>0$ independent of $m$.
Then, for a fixed time step $\tau>0$,
the discrete velocity of the ALE-FEM scheme \eqref{BGN-3d} asymptotically vanishes:
\begin{align}\label{eq:u_converge_0}
\lim_{m\to\infty} \|\Bu_h^{m+1}\|_{\Omega_h^m} = 0.
\end{align} 
\end{theorem}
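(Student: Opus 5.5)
Telescoping the energy inequality \eqref{eq:energy_dissipation} of Theorem~\ref{thm:structure_preserving} yields summability of the dissipation terms; the uniform Korn inequality (Lemma~\ref{lem:korn} with $\Gamma_1=\Gamma_{s,h}^m$) then converts this into decay of the velocity. Concretely, summing \eqref{eq:energy_dissipation} from $m=0$ to $M-1$ gives
\begin{align*}
E_h^{M} + \sum_{m=0}^{M-1}\Big[\frac{2\tau}{Re}\|D(\Bu_h^{m+1})\|^2_{\Omega_h^m} + \tau\beta_s\|\bbP_s\Bu_h^{m+1}\|^2_{\Gamma_{s,h}^m} + \tau\beta_c\|\widetilde\Bw_h^{m+1}\cdot\bm\mu_{s,h}^m\|^2_{\Gamma_{c,h}^m}\Big] \le E_h^0 .
\end{align*}
Because $E_h^M$ is bounded from below uniformly in $M$, and $\tau>0$, $\beta_s>0$ are fixed, the series $\sum_m \|D(\Bu_h^{m+1})\|^2_{\Omega_h^m}$ and $\sum_m\|\bbP_s\Bu_h^{m+1}\|^2_{\Gamma_{s,h}^m}$ converge. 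In particular, their general terms tend to zero as $m\to\infty$.

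Next I control the full trace of $\Bu_h^{m+1}$ on $\Gamma_{s,h}^m$. From Part I of the proof of Theorem~\ref{thm:structure_preserving}, testing with $\lambda_s=\Bu_h^{m+1}\cdot\Bn_s$ gives $\Bu_h^{m+1}\cdot\Bn_s\equiv0$ on $\Gamma_{s,h}^m$, so $\Bu_h^{m+1}=\bbP_s\Bu_h^{m+1}$ there. Hence $\|\Bu_h^{m+1}\|^2_{L^2(\Gamma_{s,h}^m)}=\|\bbP_s\Bu_h^{m+1}\|^2_{\Gamma_{s,h}^m}\to0$.

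I then apply Lemma~\ref{lem:korn} on $\Omega_h^m$ with $\Gamma_1=\Gamma_{s,h}^m$, which has positive measure by assumption. Using the assumed uniform constant $C_\Omega$, this gives
\[
\|\Bu_h^{m+1}\|^2_{\Omega_h^m}\le C_\Omega\Big(\|D(\Bu_h^{m+1})\|^2_{\Omega_h^m}+\|\bbP_s\Bu_h^{m+1}\|^2_{\Gamma_{s,h}^m}\Big),
\]
and the right-hand side tends to zero. This proves \eqref{eq:u_converge_0}, and even shows that $\sum_m\|\Bu_h^{m+1}\|^2_{\Omega_h^m}<\infty$.

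The main obstacle is the uniformity of the Korn constant across the moving domains. Lemma~\ref{lem:korn} is proved by compactness on one fixed domain, so it gives no control of how the constant depends on $m$. The statement removes this by assumption, so within the proof it only has to be invoked carefully: it must be applied to the discrete $H^1$ function $\Bu_h^{m+1}$ on $\Omega_h^m$, the very domain on which the dissipation terms are measured. No mismatch between $\Omega_h^m$ and $\Omega_h^{m-1}$ arises, since all quantities in the dissipation are posed on $\Omega_h^m$. The hypothesis $\beta_c>0$ is not needed for this argument; it only gives additional decay of the contact-line speed.
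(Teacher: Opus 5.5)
Your proposal is correct and follows the paper's proof essentially step by step. You telescope the energy inequality, deduce $\|D(\Bu_h^{m+1})\|_{\Omega_h^m}\to 0$ and $\|\bbP_s\Bu_h^{m+1}\|_{\Gamma_{s,h}^m}\to 0$, use the nodal impermeability constraint to identify the full trace on $\Gamma_{s,h}^m$ with its tangential part, and conclude with the uniform Korn inequality. Your side remarks are also correct: the hypothesis $\beta_c>0$ is not actually used, and the argument even yields $\sum_m\|\Bu_h^{m+1}\|^2_{\Omega_h^m}<\infty$.
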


\begin{proof}
Since the discrete energy is bounded from below, let 
$$
E_* := \inf_{m\ge 0} E_h^m > -\infty. 
$$ 
By summing the discrete energy dissipation inequality \eqref{eq:energy_dissipation} from $k=0$ to $m$, we obtain
\begin{align*}
\tau \sum_{k=0}^{m}  
&\left( \frac{1}{2\tau} \| \Bu_h^{k+1} \circ \psi_h^k - \Bu_h^k \|^2_{\Omega_h^{k-1}} + \frac{2}{Re} \|D(\Bu_h^{k+1})\|^2_{\Omega_h^k} \right. \notag \\
&\quad \left. + \beta_s \| \bbP_s \Bu_h^{k+1} \|^2_{\Gamma_{s,h}^k}
+ \beta_c \|\widetilde \Bw_h^{k+1} \cdot \bm \mu_{s,h}^k \|^2_{\Gamma_{c,h}^k} \right) \le E_h^0 - E_h^{m+1} < E_h^0 - E_* < \infty. 
\end{align*}
Since the partial sums on the left-hand side are nondecreasing and
uniformly bounded, the corresponding infinite series converges.
Consequently, for the fixed time step $\tau>0$, each nonnegative
dissipation term vanishes asymptotically. In particular, since $\beta_s > 0$:
\[
\|D(\Bu_h^{m+1})\|_{\Omega_h^m} \to 0 \quad \text{and} \quad \|\bbP_s \Bu_h^{m+1}\|_{\Gamma_{s,h}^m} \to 0.
\] 
Moreover, the discrete impermeability constraint in \eqref{BGN-3d-fluid},
together with the weighted nodal pairing defined in \eqref{eq:mass_lumping_nodal}, implies
\[
\Bu_h^{m+1}\cdot \Bn_s=0
\qquad\text{on }\Gamma_{s,h}^m,
\]
since the pairing involves all quadratic boundary degrees of freedom.
Hence $\Bu_h^{m+1} = \bbP_s \Bu_h^{m+1}$ on $\Gamma_{s,h}^m$, and therefore
\[
\|\Bu_h^{m+1}\|_{\Gamma_{s,h}^m}
=
\|\bbP_s \Bu_h^{m+1}\|_{\Gamma_{s,h}^m}
\to 0.
\] 
By our geometric assumptions on the discrete domains $\Omega_h^m$ and the solid boundary $\Gamma_{s,h}^m$, we apply Lemma \ref{lem:korn} to the discrete velocity $\Bu_h^{m+1}$, which yields
\[
\|\Bu_h^{m+1}\|_{\Omega_h^m}^2 \le C_\Omega \big( \|D(\Bu_h^{m+1})\|_{\Omega_h^m}^2 + \|\Bu_h^{m+1}\|_{\Gamma_{s,h}^m}^2 \big).
\]
Taking the limit $m \to \infty$ on both sides, the right-hand side goes to zero, which leads to
$$
\lim_{m\to\infty} \|\Bu_h^{m+1}\|_{\Omega_h^m} = 0.
$$
This completes the proof. 
\end{proof}

\begin{remark}\upshape
Theorem~\ref{coro} shows that the
proposed scheme drives the discrete velocity to zero as
\(m\to\infty\). In this sense, the numerical solution
approaches a rest state, and no persistent spurious fluid velocity remains in
the long-time regime.

This conclusion relies essentially on the exact cancellation of the
gravitational work with the discrete variation of the gravitational potential
energy. If gravity is instead treated directly as a body force in the
momentum equation, according to \eqref{eq:gravity-residual-direct}, the discrete energy relation contains an additional term,
\begin{align}
E_h^{m+1} + \frac{1}{2} \| \Bu_h^{m+1} \circ \psi_h^m - \Bu_h^m \|^2_{\Omega_h^{m-1}} 
+ \frac{2\tau}{Re} \| D(\Bu_h^{m+1}) \|^2_{\Omega_h^m}& \notag \\
+ \tau \beta_s \| \bbP_s \Bu_h^{m+1} \|^2_{\Gamma_{s,h}^m}
+ \tau \beta_c \|\widetilde \Bw_h^{m+1} \cdot \bm \mu_{s,h}^m \|^2_{\Gamma_{c,h}^m} 
&\leq E_h^m + \mathcal R_g^{m+1}.
\end{align}
In general, \(\mathcal R_{g}^{m+1}\neq 0\) and has no definite sign. Therefore, one cannot conclude that
\[
\Bu_h^{m+1}\to \bm{0}.
\]
Consequently, for the direct treatment of gravity, the numerical solution may
fail to relax to a true discrete rest state, and a nonzero residual velocity
may persist near equilibrium. 
Such a persistent nonphysical velocity can be interpreted as a spurious
velocity.  
\end{remark}

\begin{remark}\upshape
\label{rmk:two-phase}
Although this paper focuses on a single-phase free-boundary problem, the proposed treatment for gravity can be naturally extended to two-phase flows, where the gravitational contribution can also be rewritten as an interface integral associated with the motion of the phase boundary. 
We note that Duan et al. \cite{Duan2022} treated the gravitational potential energy by imposing a divergence-free constraint on the mesh velocity, namely
\(\nabla\cdot \Bw_h^m = 0\). 
In contrast, our approach does not require the bulk mesh velocity to be divergence-free. 
\end{remark}

\section{Newton iteration method}
\label{sec:Newton}
The structure-preserving schemes \eqref{BGN-2d} and \eqref{BGN-3d} are inherently nonlinear, because the intermediate normal vector and the gravitational term are both parameterized by the unknown mesh velocity \(\widetilde{\Bw}_h^{m+1}\). 
Therefore, these fully discrete systems require the solution of a nonlinear algebraic problem at each time step. 
To this end, we adopt Newton's method, which provides an efficient iterative solver for the present formulation.

The Newton linearization in two and three dimensions is based on the same idea. 
Since the three-dimensional case contains all essential ingredients and is technically more involved, we only present the three-dimensional construction below; the two-dimensional version follows analogously and is simpler.

We introduce the unknown vector
\begin{equation}\label{eq:U3D-def}
\BU_h^{m+1}
:=
\bigl(
\Bu_h^{m+1},
\widetilde p_h^{m+1},
\kappa_{s,h}^{m+1},
\widetilde H_h^{m+1},
\widetilde \Bw_h^{m+1},
\varphi_{c,h}^{m+1},
\psi_{c,h}^{m+1}
\bigr).
\end{equation}
Given $\BU_h^m$, the discrete problem at time level $t_{m+1}$ can be written in residual form as follows: find
$\BU_h^{m+1}$ such that
\begin{equation}\label{eq:residual-system-3D}
R_i(\BU_h^{m+1})=0,
\qquad i=1,2,3,
\end{equation}
where the residual equations correspond to
\eqref{BGN-3d-fluid}--\eqref{BGN-3d-b}.
More precisely, we define
\begin{align}
&R_1(\BU; \Bv, q, \lambda_s, \phi_c, \chi_c)
:=\;
\frac{1}{2\tau}(\Bu, \Bv)_{\Omega_h^m}
+\frac{1}{2\tau}(\Bu\circ\psi_h^m, \Bv\circ\psi_h^m)_{\Omega_h^{m-1}} \notag \\
&\qquad + b(\Bu_h^m-\Bw_h^m; \Bu\circ\psi_h^m, \Bv\circ\psi_h^m)_{\Omega_h^{m-1}}
+\Big(\frac{2}{Re} D(\Bu),D(\Bv)\Big)_{\Omega_h^m}
-(\widetilde p, \nabla\cdot \Bv)_{\Omega_h^m}
\nonumber\\
&\qquad
+(q, \nabla\cdot \Bu)_{\Omega_h^m}
+\langle \widetilde H\, \Bn_{F,h}^m, \Bv \rangle_{\Gamma_{F,h}^m}
-\langle \kappa_s \Bn_{s}, \Bv\rangle_{\Gamma_{s,h}^m}^{ s, h }
+ \langle \lambda_s \Bn_{s}, \Bu\rangle_{\Gamma_{s,h}^m}^{ s, h }
+ \beta_s \langle \bbP_s \Bu, \bbP_s \Bv \rangle_{\Gamma_{s,h}^m}
\notag \\
&\qquad
+ \frac{1}{W\!e} \langle \widetilde \Bw\cdot \Bn_{s}, \phi_c\rangle_{\Gamma_{c,h}^m}^{ h } 
+ \frac{1}{W\!e} \langle \widetilde \Bw\cdot \bm{\xi}_{c,h}^m, \chi_c\rangle_{\Gamma_{c,h}^m}^{ h }
- \frac{1}{\tau}(\Bu_h^m,\Bv\circ\psi_h^m)_{\Omega_h^{m-1}},
\label{eq:R1-3D}
\\[0.3em]
&R_2(\BU;\bm{\eta})
:=\;
-\langle \widetilde H\, \mathcal N(\widetilde\Bw), \bm{\eta}\rangle_{\Gamma_{F,h}^m}^{ h }
+ \frac{1}{W\!e} \langle \nabla_\Gamma( {\rm id} + \tau \widetilde \Bw), \nabla_\Gamma \bm{\eta} \rangle_{\Gamma_{F,h}^m}
- \frac{1}{W\!e} \langle \varphi_c \Bn_{s}, \bm{\eta} \rangle_{\Gamma_{c,h}^m}^{ h }
\nonumber\\
&\qquad
-\frac{1}{W\!e}\int_{\Gamma_{c,h}^m} \bigl(\cos\theta_Y \,\mathcal M(\widetilde\Bw)+\sin\theta_{c,h}^m\,\Bn_{s}\bigr)\cdot \bm{\eta}\,\Rd s 
+ \beta_c \int_{\Gamma_{c,h}^m} (\widetilde \Bw \cdot \bm{\mu}_{s,h}^m)\bm{\mu}_{s,h}^m \cdot \bm \eta \Rd s \notag \\
&\qquad 
- \frac{1}{W\!e} \langle \psi_c\, \bm{\xi}_{c,h}^m, \bm{\eta}\rangle_{\Gamma_{c,h}^m}^{ h }
+g \langle \mathcal G^{\rm 3d}(\widetilde \Bw), \bm{\eta}\rangle_{\Gamma_{F,h}^m},
\label{eq:R2-3D}
\\[0.3em]
&R_3(\BU;\phi_F)
:=\;
\langle \widetilde \Bw\cdot \mathcal N(\widetilde\Bw), \phi_F\rangle_{\Gamma_{F,h}^m}^{ h }
-\langle \Bu\cdot \Bn_{F,h}^m, \phi_F\rangle_{\Gamma_{F,h}^m},
\label{eq:R3-3D}
\end{align}
where we use the notations
$$
\mathcal N(\widetilde\Bw)
:=
\frac16 \hat{\Bn}_{F,h}^{m}
+\frac23 \hat{\Bn}_{F,h}^{m+1/2}(\widetilde \Bw)
+\frac16 \hat{\Bn}_{F,h}^{m+1}(\widetilde \Bw),\quad 
\mathcal{M}(\widetilde{\Bw}) := \bm{\mu}_{s,h}^{m+1/2}.
$$

To solve \eqref{eq:residual-system-3D}, we apply Newton's method.
At the $l$-th iteration, let
\[
\BU_h^{(l)}
=
\bigl(
\Bu_h^{(l)},\widetilde p_h^{(l)},\kappa_{s,h}^{(l)},
\widetilde H_h^{(l)},\widetilde{\Bw}_h^{(l)},
\varphi_{c,h}^{(l)},\psi_{c,h}^{(l)}
\bigr)
\]
be the current iterate, and let
\[
\delta \BU_h^{(l)}
=
\bigl(
\delta\Bu_h,\delta\widetilde p_h,\delta\kappa_{s,h},
\delta\widetilde H_h,\delta\widetilde{\Bw}_h,
\delta\varphi_{c,h},\delta\psi_{c,h}
\bigr)
\]
denote the Newton increment.
Starting from an initial guess $\BU_h^{(0)}$, usually taken as
$\BU_h^{(0)}=\BU_h^m$,
we seek $\delta \BU_h^{(l)}$ such that
\begin{align}
DR_1(\BU_h^{(l)})[\delta \BU_h^{(l)}; \Bv, q, \lambda_s, \phi_c, \chi_c]
&= -R_1(\BU_h^{(l)}; \Bv, q, \lambda_s, \phi_c, \chi_c), \label{eq:Newton-system-3D-1} \\
DR_2(\BU_h^{(l)})[\delta \BU_h^{(l)}; \bm{\eta}]
&= -R_2(\BU_h^{(l)}; \bm{\eta}), 
\label{eq:Newton-system-3D-2} \\
DR_3(\BU_h^{(l)})[\delta \BU_h^{(l)}; \phi_F]
&= -R_3(\BU_h^{(l)}; \phi_F).
\label{eq:Newton-system-3D-3}
\end{align}
After solving \eqref{eq:Newton-system-3D-1}--\eqref{eq:Newton-system-3D-3}, the iterate is updated by
\[
\BU_h^{(l+1)}=\BU_h^{(l)}+\delta \BU_h^{(l)}.
\]

The first residual \(R_1\) is linear with respect to the unknowns. Therefore, its Fr\'echet derivative is simply given by the corresponding bilinear form in \eqref{eq:R1-3D}, and we omit its explicit expression for brevity.
The main difficulty lies in the geometric nonlinearities in $R_2$ and $R_3$.

To make the geometric nonlinearities explicit, we define
\[
\mathcal{A}(\widetilde{\Bw})
:= \partial_{s_1}\widetilde{\Bw}\times \hat{\Bt}_{F,2}^m
   + \hat{\Bt}_{F,1}^m\times \partial_{s_2}\widetilde{\Bw},
\qquad
\mathcal{B}(\widetilde{\Bw})
:= \partial_{s_1}\widetilde{\Bw}\times \partial_{s_2}\widetilde{\Bw}.
\]
The corresponding Jacobian is obtained by differentiating these quantities. We first note that
\begin{align}
D\mathcal A(\widetilde\Bw)[\delta\widetilde\Bw]
&=
\partial_{s_1}\delta\widetilde{\Bw}\times \hat{\Bt}_{F,2}^m
+ \hat{\Bt}_{F,1}^m\times \partial_{s_2}\delta\widetilde{\Bw},
\label{eq:DA-3D}
\\
D\mathcal B(\widetilde\Bw)[\delta\widetilde\Bw]
&=
\partial_{s_1}\delta\widetilde{\Bw}\times \partial_{s_2}\widetilde{\Bw}
+ \partial_{s_1}\widetilde{\Bw}\times \partial_{s_2}\delta\widetilde{\Bw}.
\label{eq:DB-3D}
\end{align}
Hence, we have 
\begin{align*}
D \mathcal N(\widetilde\Bw)[\delta\widetilde\Bw]
&=
\frac{\tau}{2}D\mathcal A(\widetilde\Bw)[\delta\widetilde\Bw]
+\frac{\tau^2}{3} D\mathcal B(\widetilde\Bw)[\delta\widetilde\Bw],
\quad 
D\mathcal M(\widetilde\Bw)[\delta\widetilde\Bw]
=
\frac{\tau}{2}(\partial_s\delta\widetilde\Bw)\times \Bn_{s},
\end{align*}
moreover, for the gravitational term, we have
\begin{align*}
D\mathcal G^{\rm 3d}(\widetilde\Bw)[\delta\widetilde\Bw]
&= \tau (\delta \widetilde\Bw \cdot \Be_z)
\left(\frac12 \Bn_{F,h}^m + \frac{\tau}{3}\mathcal{A}(\widetilde{\Bw})
+ \frac{\tau^2}{4}\mathcal{B}(\widetilde{\Bw}) \right) \notag \\
&\quad +\frac{2}{3} z_h^{m+1/2}(\widetilde \Bw) \left( \frac{\tau}{2}D\mathcal A(\widetilde\Bw)[\delta\widetilde\Bw]
+\frac{\tau^2}{4}D\mathcal B(\widetilde\Bw)[\delta\widetilde\Bw] \right) \notag \\
&\quad + \frac{1}{6} z_h^{m+1}(\widetilde \Bw)\left(
\tau D\mathcal A(\widetilde\Bw)[\delta\widetilde\Bw]
+ \tau^2 D\mathcal B(\widetilde\Bw)[\delta\widetilde\Bw] \right).
\end{align*}

Using the above identities, the Fr\'echet derivative of $R_2$ is given by
\begin{align}
&DR_2(\BU_h^{(l)})[\delta \BU_h^{(l)}; \bm{\eta}]
=\;
- \langle \delta\widetilde H_h\, \mathcal N (\widetilde{\Bw}_h^{(l)}), \bm{\eta} \rangle_{\Gamma_{F,h}^m}^{ h }
- \langle \widetilde H_h^{(l)}\,D \mathcal N(\widetilde{\Bw}_h^{(l)})[\delta\widetilde{\Bw}_h], \bm{\eta} \rangle_{\Gamma_{F,h}^m}^{ h }
\notag\\
&\qquad
+ \frac{\tau}{W\!e} \langle \nabla_{\Gamma}\delta\widetilde{\Bw}_h, \nabla_{\Gamma}\bm{\eta} \rangle_{\Gamma_{F,h}^m}
- \frac{1}{W\!e} \langle \delta\varphi_{c,h}\,\Bn_{s}, \bm{\eta} \rangle_{\Gamma_{c,h}^m}^{ h } 
- \frac{1}{W\!e} \langle \delta\psi_{c,h}\,\bm{\xi}_{c,h}^m, \bm{\eta} \rangle_{\Gamma_{c,h}^m}^{ h }
\notag \\
&\qquad
- \frac{1}{W\!e} \int_{\Gamma_{c,h}^m}
\cos\theta_Y\,
D\mathcal M(\widetilde{\Bw}_h^{(l)})[\delta\widetilde{\Bw}_h]\cdot \bm{\eta}\, \Rd s
+ \beta_c \int_{\Gamma_{c,h}^m}
(\delta \widetilde \Bw_h \cdot \bm \mu_{s,h}^m)\bm \mu_{s,h}^m \cdot \bm \eta \Rd s
\notag\\
&\qquad
+ g \langle D\mathcal G^{\rm 3d}(\widetilde{\Bw}_h^{(l)})[\delta\widetilde{\Bw}_h], \bm{\eta} \rangle_{\Gamma_{F,h}^m},
\label{eq:DR2-3D}
\end{align}
while the derivative of $R_3$ reads
\begin{align}
DR_3(\BU_h^{(l)})[\delta \BU_h^{(l)}; \phi_F]
=\;&
\Bigl\langle
\delta\widetilde{\Bw}_h\cdot \mathcal N (\widetilde{\Bw}_h^{(l)})
+
\widetilde{\Bw}_h^{(l)}\cdot
D \mathcal{N}(\widetilde{\Bw}_h^{(l)})[\delta\widetilde{\Bw}_h],
\phi_F
\Bigr\rangle_{\Gamma_{F,h}^m}^{ h }
\notag\\
&\;
- \langle \delta\Bu_h \cdot \Bn_{F,h}^m, \phi_F \rangle_{\Gamma_{F,h}^m}.
\label{eq:DR3-3D}
\end{align}

Thus, each Newton step amounts to solving the linear system
\eqref{eq:Newton-system-3D-1}--\eqref{eq:Newton-system-3D-3} for the increment
$\delta \BU_h^{(l)}$.

In practice, the iteration is terminated once the relative change
of the free-boundary mesh velocity becomes sufficiently small:
\begin{equation}
\frac{
 \bigl\|
 \widetilde{\Bw}_h^{(l + 1)}
 -
 \widetilde{\Bw}_h^{(l)}
 \bigr\|_{L^2(\Gamma_{F,h}^m)}
}{
 1+
 \bigl\|
 \widetilde{\Bw}_h^{(l + 1)}
 \bigr\|_{L^2(\Gamma_{F,h}^m)}
}
<\varepsilon_N,
\qquad
\varepsilon_N=10^{-12}.
\label{eq:newton-tol}
\end{equation}

\section{Numerical tests}
\label{sec:numerical}
In this section, we present numerical experiments for the proposed ALE--FEM method in both two and three spatial dimensions. 
Unless otherwise stated, we take $\beta_c=0.01$. 
Although the theoretical decay of the discrete velocity is established for $\beta_s>0$, 
we test both the case $\beta_s=0$ and $\beta_s > 0$.
The case \(\beta_s=0\) is used as an additional robustness test beyond the scope of Theorem~\ref{coro}.
All computations are carried out using an implementation based on the finite element library NGSolve; see \url{https://ngsolve.org}.

\subsection{Two-dimensional droplet problem}
\label{sec4.3}
We first consider the two-dimensional evolution of a droplet under gravity with constants \(W\!e=1\) and
\(Re=0.5\).
The initial droplet is chosen as a semicircle, with initial contact angle close to \(90^\circ\). 
We compare the numerical solutions for different gravitational constants $g$ and contact angles $\theta_Y$. 
In particular, the wetting and dewetting cases correspond to the static contact angles
\(\theta_Y = 60^\circ\) and \(\theta_Y=120^\circ\), respectively.

Figure~\ref{fig:droplet_2D_1} shows the computed droplet profiles and meshes for \(g=0, 1, 2, 10\). 
In both the wetting and dewetting cases,
the proposed ALE--BGN--MDR method maintains good mesh quality during the evolution, especially near the moving contact points.
Compared with the case without gravity, the droplet under gravity becomes flatter, which is consistent with the physical effect of gravity.
Moreover, the droplet becomes flatter as $g$ increases and our method 
still maintains high mesh quality even for $g=10$. 

Figure~\ref{fig:droplet_2D_2} shows that the contact angle converges to $\theta_Y$ and the
time evolution of the scaled volume error $\left| |\Omega_h^m| - |\Omega_h^0| \right| / |\Omega_h^0|$ with $g=1$.
The relative volume error remains at the level of \(10^{-15}\), indicating that the proposed scheme preserves the droplet volume up to machine precision.

\begin{figure}[htbp]
\centering
\captionsetup[subfigure]{skip=-1pt}

\begin{subfigure}[b]{0.42\textwidth}
    \centering
    \includegraphics[width=\textwidth]{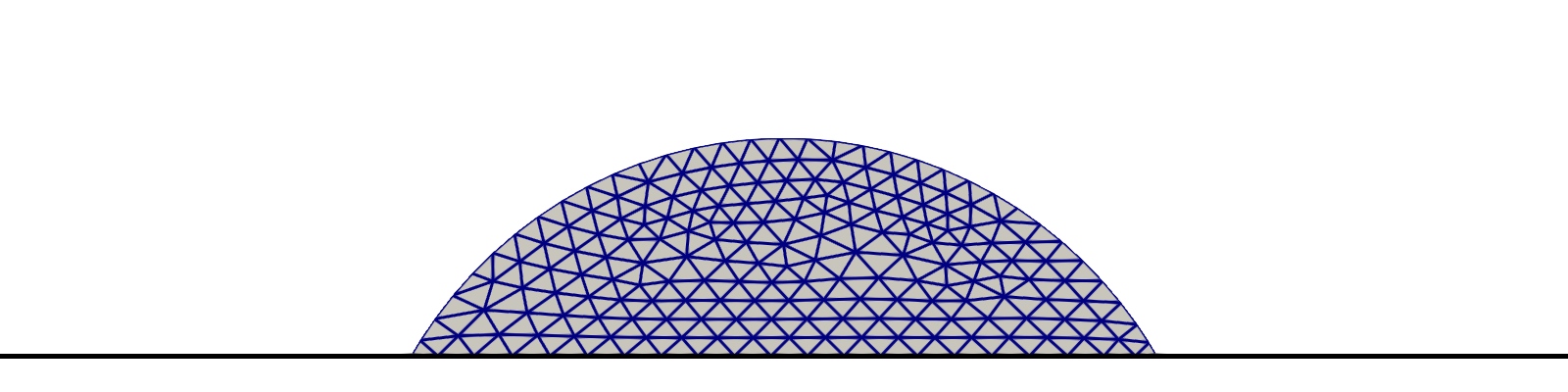}
    \caption{\(\theta_Y=60^\circ,\ g=0\)}
\end{subfigure}
\hspace{0.04\textwidth}
\begin{subfigure}[b]{0.42\textwidth}
    \centering
    \includegraphics[width=\textwidth]{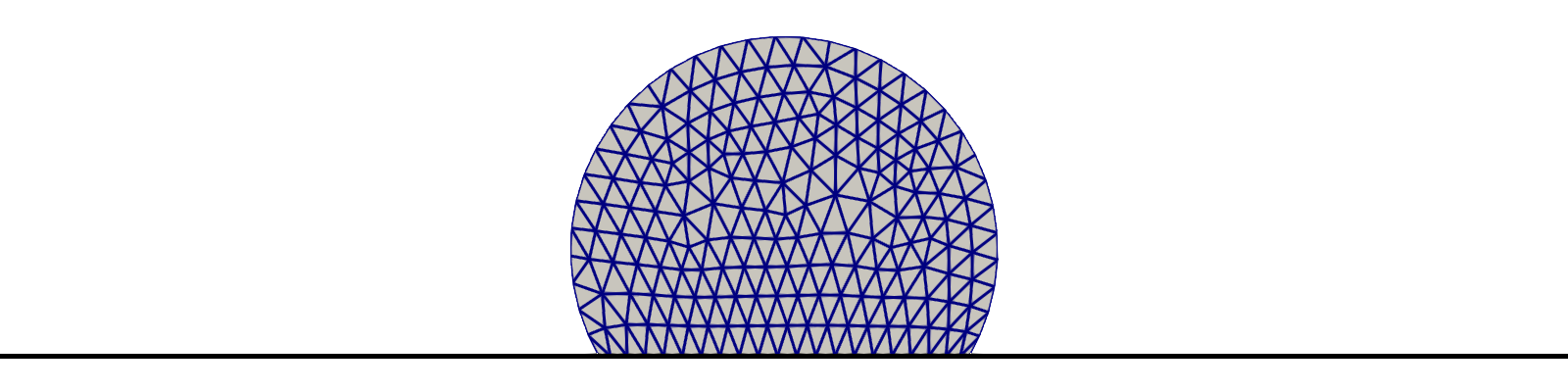}
    \caption{\(\theta_Y=120^\circ,\ g=0\)}
\end{subfigure}

\vspace{-0.5cm}

\begin{subfigure}[b]{0.42\textwidth}
    \centering
    \includegraphics[width=\textwidth]{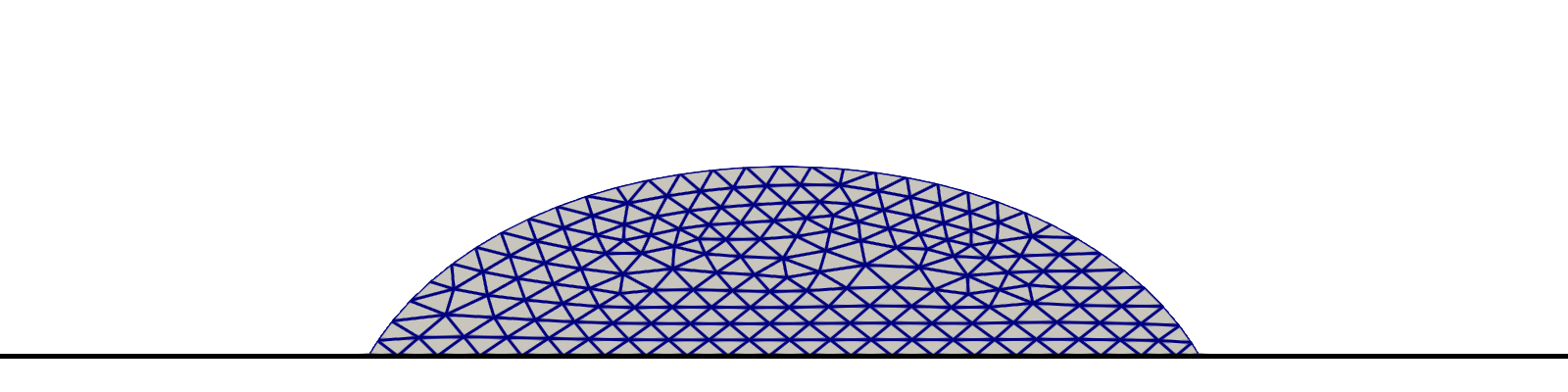}
    \caption{\(\theta_Y=60^\circ,\ g=1\)}
\end{subfigure}
\hspace{0.04\textwidth}
\begin{subfigure}[b]{0.42\textwidth}
    \centering
    \includegraphics[width=\textwidth]{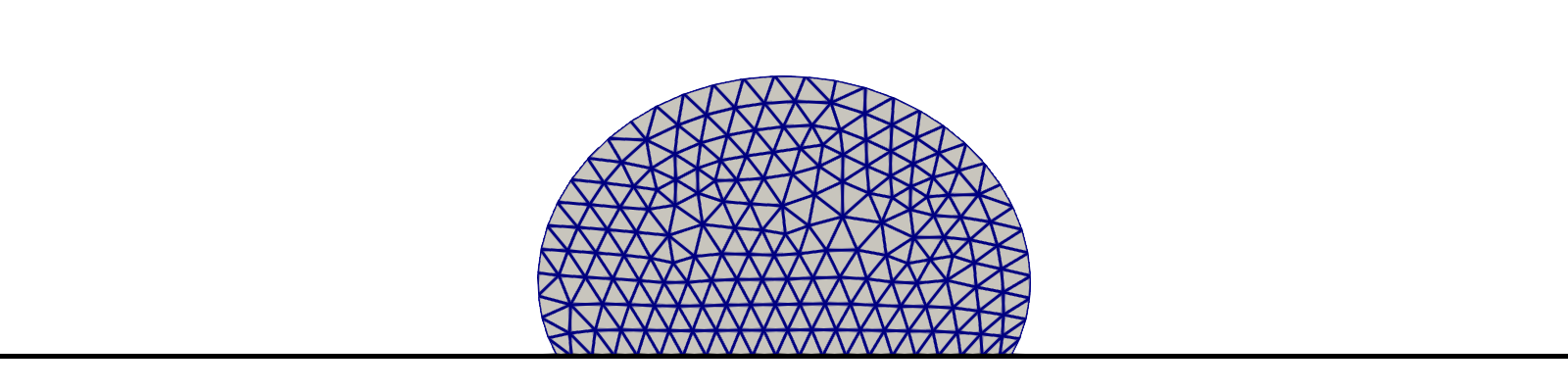}
    \caption{\(\theta_Y=120^\circ,\ g=1\)}
\end{subfigure}

\vspace{-0.5cm}

\begin{subfigure}[b]{0.42\textwidth}
    \centering
    \includegraphics[width=\textwidth]{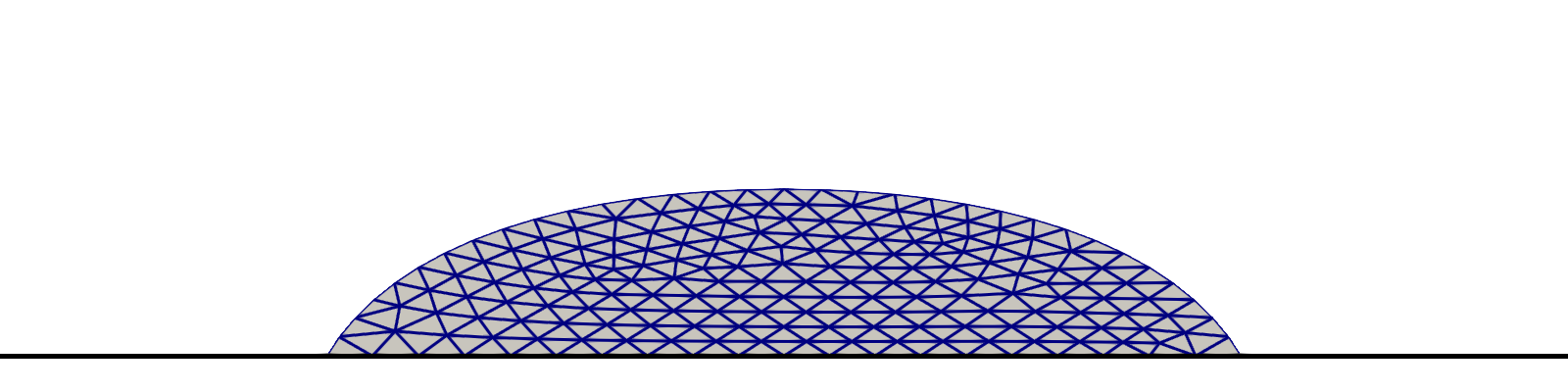}
    \caption{\(\theta_Y=60^\circ,\ g=2\)}
\end{subfigure}
\hspace{0.04\textwidth}
\begin{subfigure}[b]{0.42\textwidth}
    \centering
    \includegraphics[width=\textwidth]{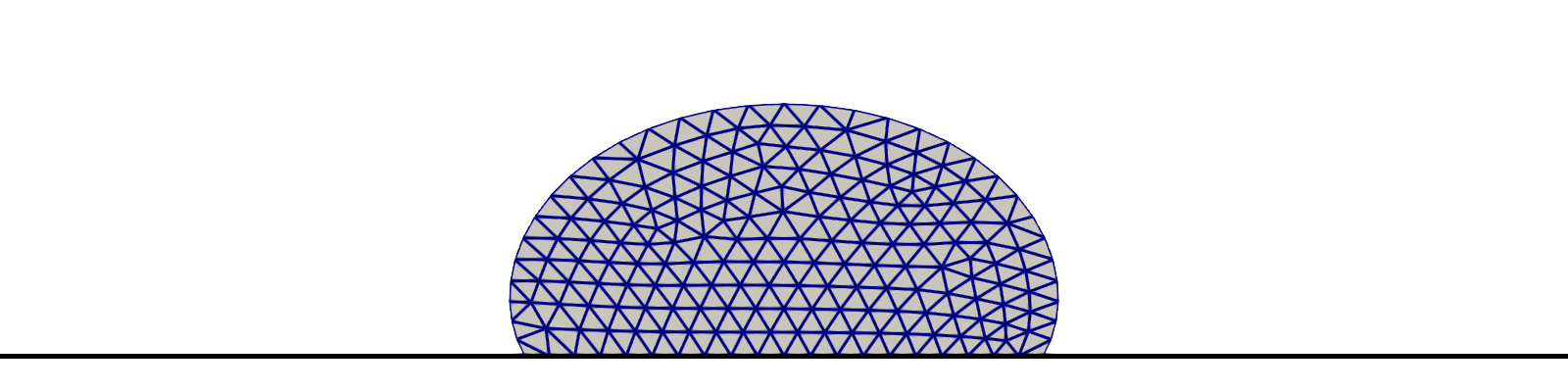}
    \caption{\(\theta_Y=120^\circ,\ g=2\)}
\end{subfigure}

\vspace{-0.5cm}

\begin{subfigure}[b]{0.42\textwidth}
    \centering
    \includegraphics[width=\textwidth]{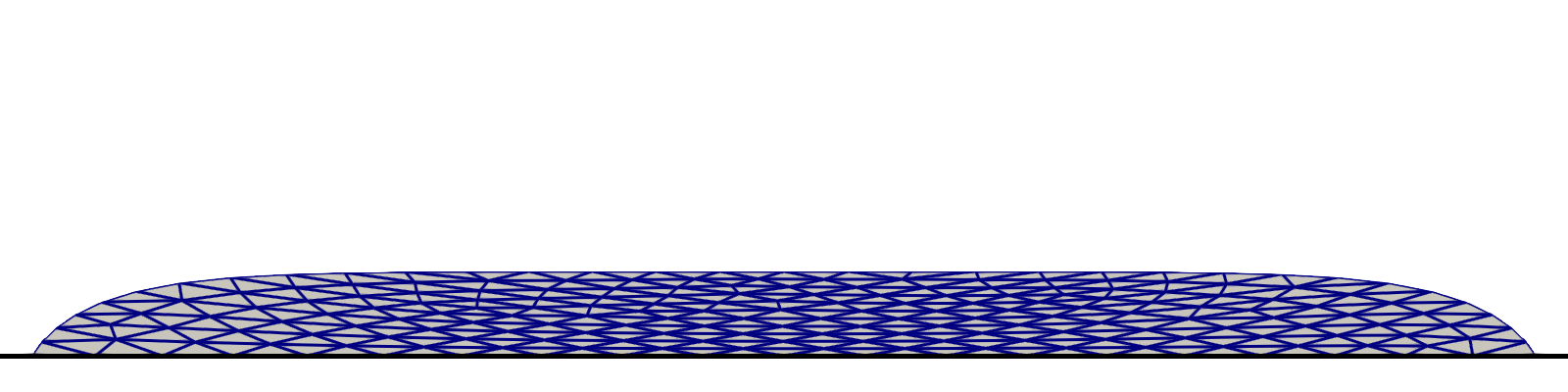}
    \caption{\(\theta_Y=60^\circ,\ g=10\)}
\end{subfigure}
\hspace{0.04\textwidth}
\begin{subfigure}[b]{0.42\textwidth}
    \centering
    \includegraphics[width=\textwidth]{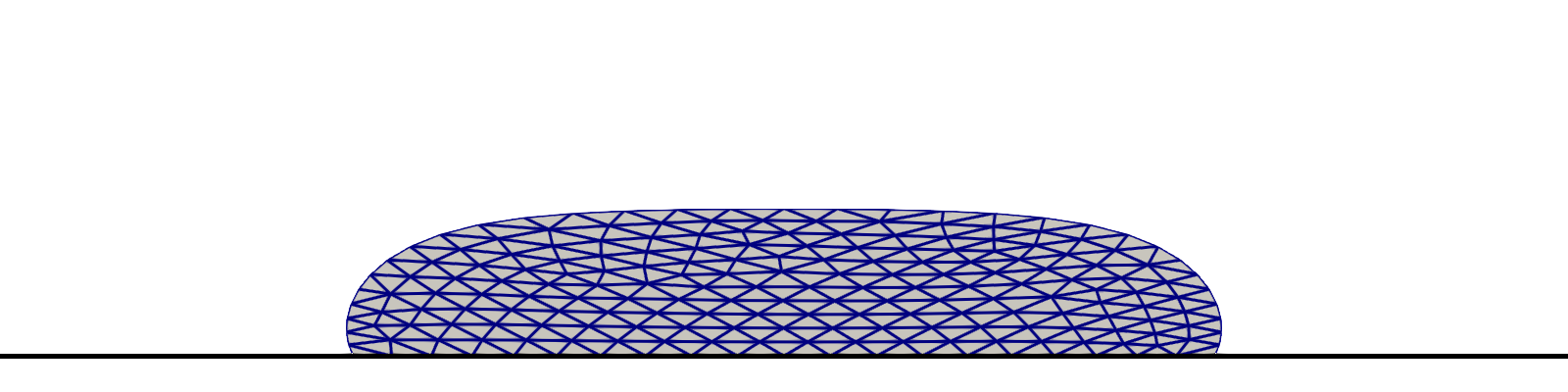}
    \caption{\(\theta_Y=120^\circ,\ g=10\)}
\end{subfigure}


\caption{Computed meshes at \(t=5.0\) for two-dimensional wetting and dewetting tests under different gravitational strengths.}
\label{fig:droplet_2D_1}
\end{figure}

\begin{figure}[!htbp]
	\centering
    \includegraphics[width=0.48\textwidth]{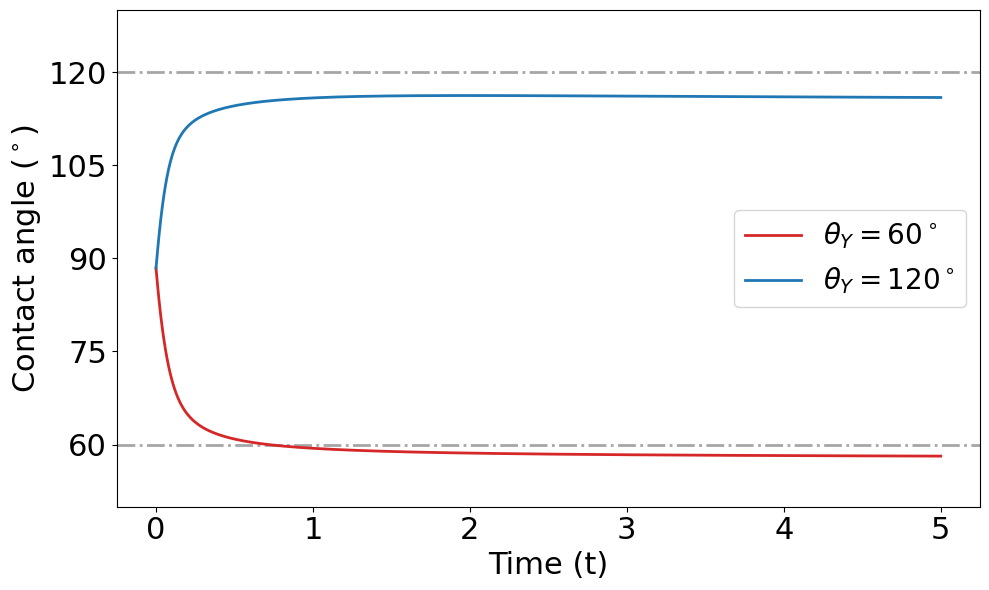}
    \includegraphics[width=0.48\textwidth]{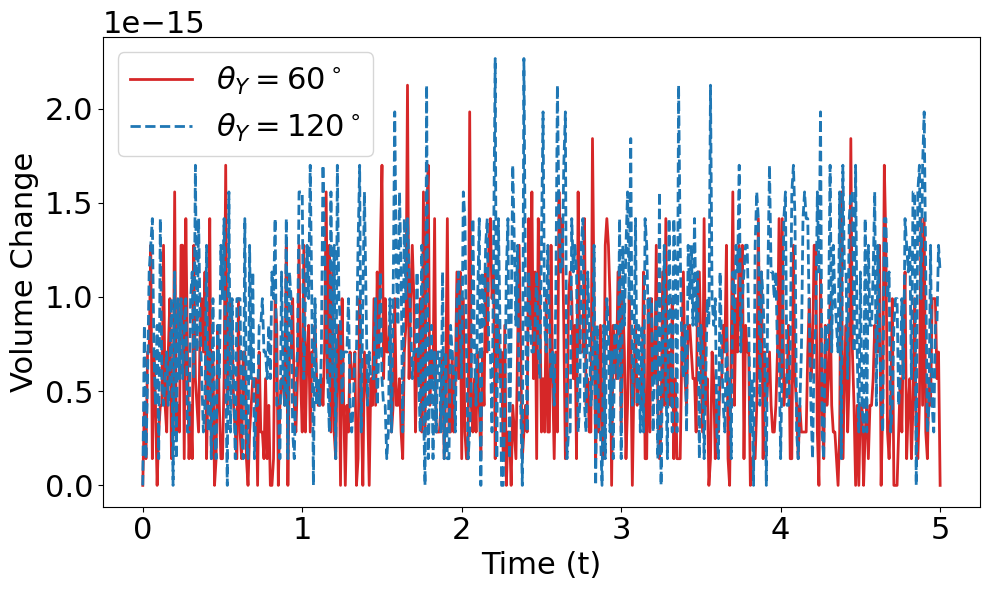}
	\vspace{-8pt}
	\caption{ Evolution of the contact angle (left) and the scaled volume error (right) $\left| |\Omega_h^m| - |\Omega_h^0| \right| / |\Omega_h^0|$ with $g=1$.} 
	\label{fig:droplet_2D_2}
\end{figure}

\subsection{Energy dissipation}
\label{sec:numerical2}
We now compare the proposed method \eqref{BGN-2d} with the standard direct treatment of gravity \eqref{BGN-old}. 
We take
\[
Re=20,\qquad W\!e=1,\qquad \theta_Y=90^\circ,\qquad g=1,
\]
and compare the numerical results for three time step sizes $\tau=0.01, 0.02, 0.04$.
Figure~\ref{fig:Droplet_Energy} shows the time histories of the discrete total energy and the kinetic energy. 
In the presence of gravity, the total energy is expected to dissipate in time and eventually approach a limiting constant as the droplet relaxes to equilibrium. 
This behavior is clearly observed for the proposed method: the total energy decreases initially and then reaches a nearly constant value in the long-time regime. 
By contrast, for the direct treatment of gravity, the discrete total energy continues to decrease even after the droplet is close to equilibrium, and the artificial decay becomes more pronounced as the time step size increases. This is consistent with the inconsistency error \(\mathcal{R}_g^{m+1}\) derived in \eqref{eq:gravity-residual-direct}. The kinetic energy in the right panel of Figure~\ref{fig:Droplet_Energy} confirms the same observation: while the proposed method drives the kinetic energy to the level of machine precision, the direct method leaves a nonzero residual kinetic energy in the long-time regime, indicating persistent spurious velocities.

To further quantify this effect, Figure~\ref{fig:Droplet_Energy_diff} plots the energy increment between two consecutive time levels,
$E_h^{m+1}-E_h^m$.
For the proposed method, this quantity tends to zero, but for the direct method, the energy increment remains at a nonzero negative level, showing that the discrete total energy keeps decreasing artificially and the computed solution fails to settle into a true equilibrium. 
These observations confirm the importance of the gravity-consistent formulation for eliminating the associated spurious velocities.

\begin{figure}[htbp]
\centering
\includegraphics[width=0.48\textwidth]{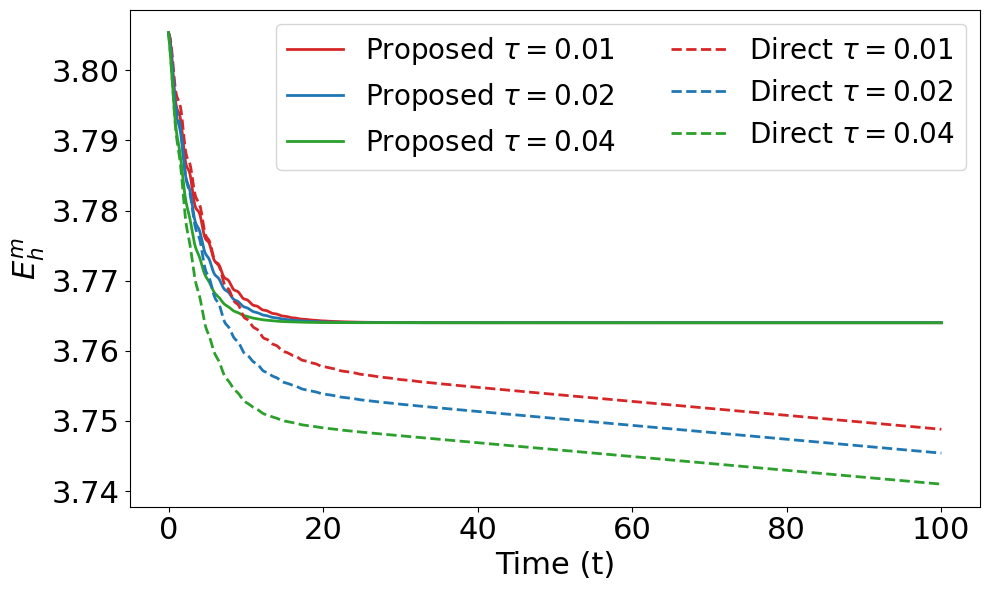}
\includegraphics[width=0.48\textwidth]{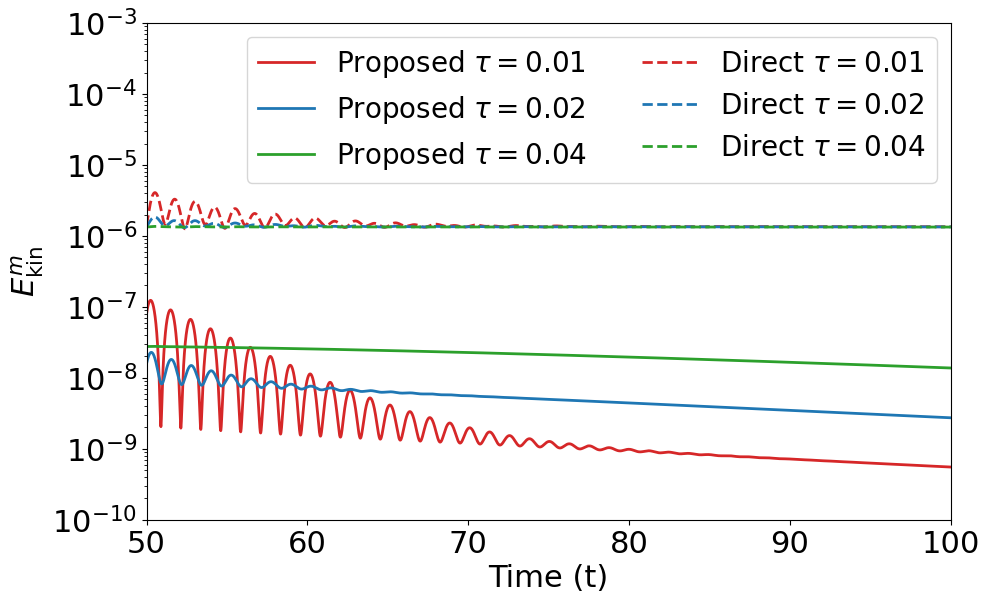}
\caption{
Comparison between the proposed method and the direct treatment of gravity for different time step sizes.
Left: total energy \(E_h^m\).
Right: kinetic energy \(E^{m}_{\rm kin}\).
}
\label{fig:Droplet_Energy}
\end{figure}

\begin{figure}[htbp]
\centering
\includegraphics[width=0.48\textwidth]{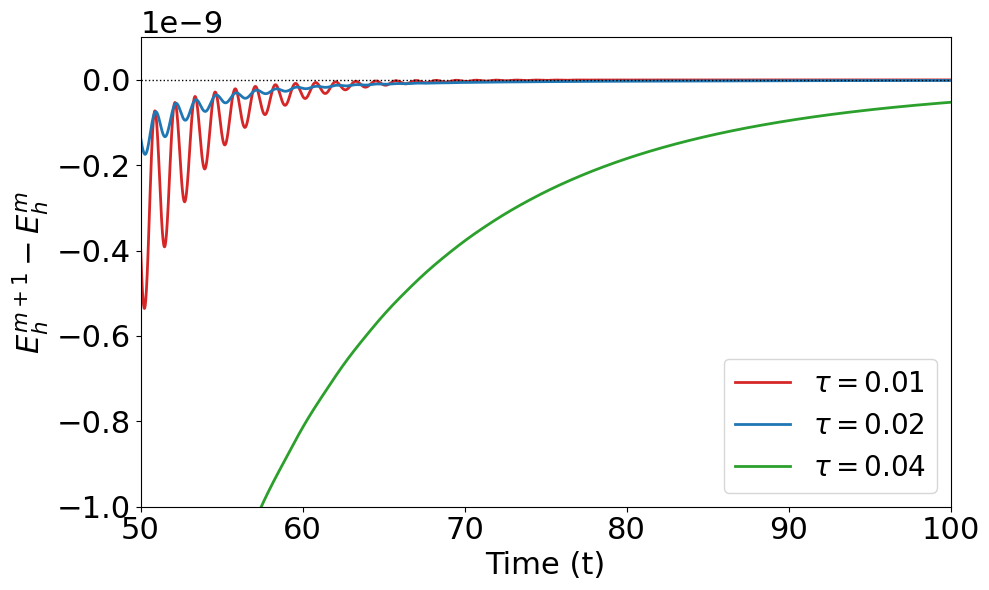}
\includegraphics[width=0.48\textwidth]{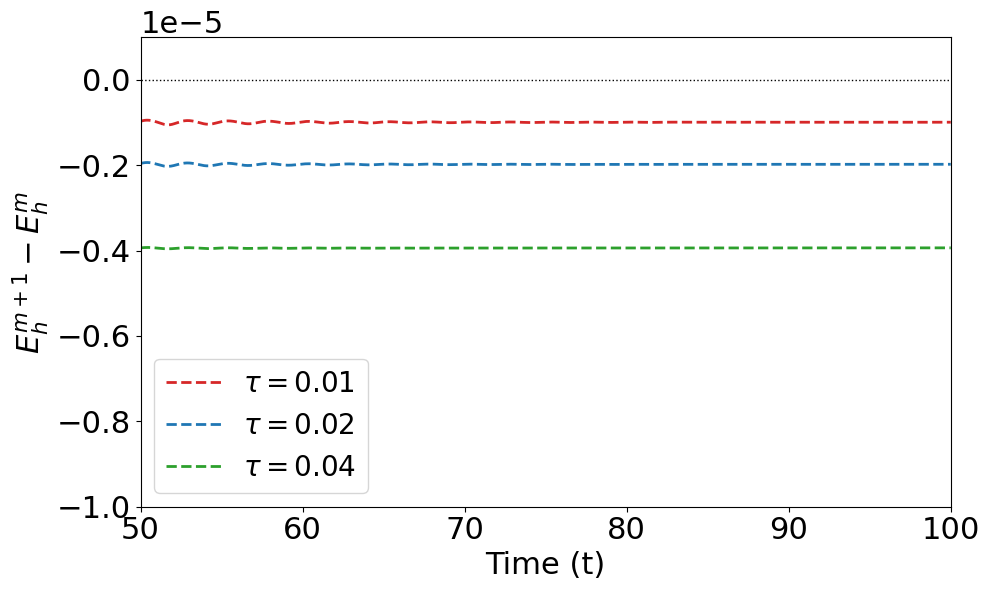}
\caption{Time evolution of the discrete energy increment \(E_h^{m+1}-E_h^m\).
Left: proposed method \eqref{BGN-2d}.
Right: direct method \eqref{BGN-old}.}
\label{fig:Droplet_Energy_diff}
\end{figure}

\subsection{Navier slip coefficient $\beta_s$}
In the Navier-slip boundary condition \eqref{2:PDE-bs2-1}, 
\( \beta_s \) measures the wall friction acting on the tangential slip velocity. Its contribution to discrete energy dissipation is $\beta_s \| \mathbb P_s \Bu_h \|^2_{\Gamma_{s,h}}$. 
To illustrate the effect of \(\beta_s\), we use the same parameter setting as in Subsection~\ref{sec:numerical2} with time step \(\tau=0.04\), and compare \(\beta_s=0,0.1,1\) for both the direct method \eqref{BGN-old} and the proposed method \eqref{BGN-2d}.
Figure~\ref{fig:Droplet_BetaS} shows the evolution of the total energy and kinetic energy. 
At early times, larger \(\beta_s\) leads to faster energy decay, since the wall friction dissipates the tangential motion more strongly.

For the proposed gravity-consistent method, the total-energy curves for different \(\beta_s\) converge to the same limiting value. 
This is expected because \(\beta_s\) affects the relaxation rate but not the static equilibrium energy, which is determined by the volume, surface tension, gravity, and contact angle.

In contrast, for the direct method, the long-time energies do not collapse to a common value. 
As shown in the previous subsection, the direct method leaves a nonzero residual kinetic energy due to gravity-induced spurious velocities. 
A larger \( \beta_s \) suppresses the tangential component of this artificial motion near the solid substrate, thereby reducing the resulting long-time energy drift. 
Hence the direct method can exhibit larger long-time total energy for larger \( \beta_s \), reflecting a \( \beta_s \)-dependent numerical state rather than a physical equilibrium.

\begin{figure}[htbp]
\centering
\captionsetup[subfigure]{skip=2pt}

\begin{subfigure}[t]{0.48\textwidth}
    \centering
    \includegraphics[width=\textwidth]{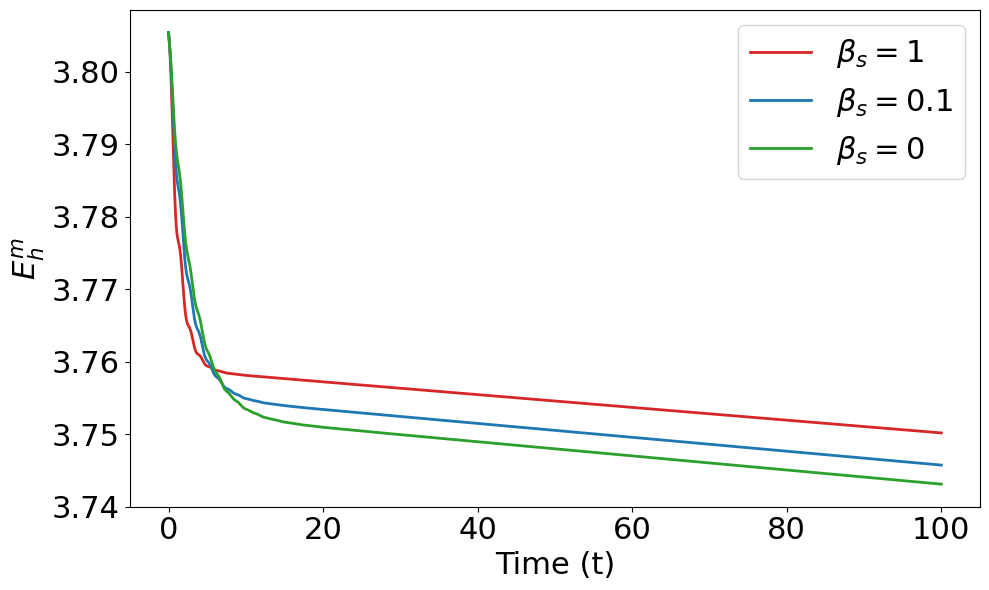}
    \caption{Total energy, direct method \eqref{BGN-old}}
\end{subfigure}
\hfill
\begin{subfigure}[t]{0.48\textwidth}
    \centering
    \includegraphics[width=\textwidth]{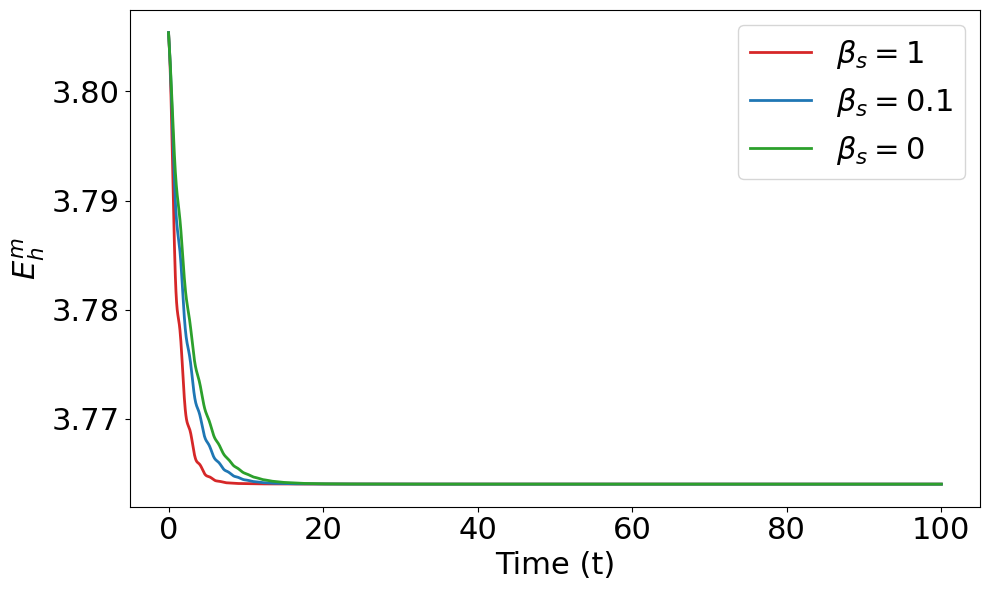}
    \caption{Total energy, proposed method \eqref{BGN-2d}}
\end{subfigure}


\begin{subfigure}[t]{0.48\textwidth}
    \centering
    \includegraphics[width=\textwidth]{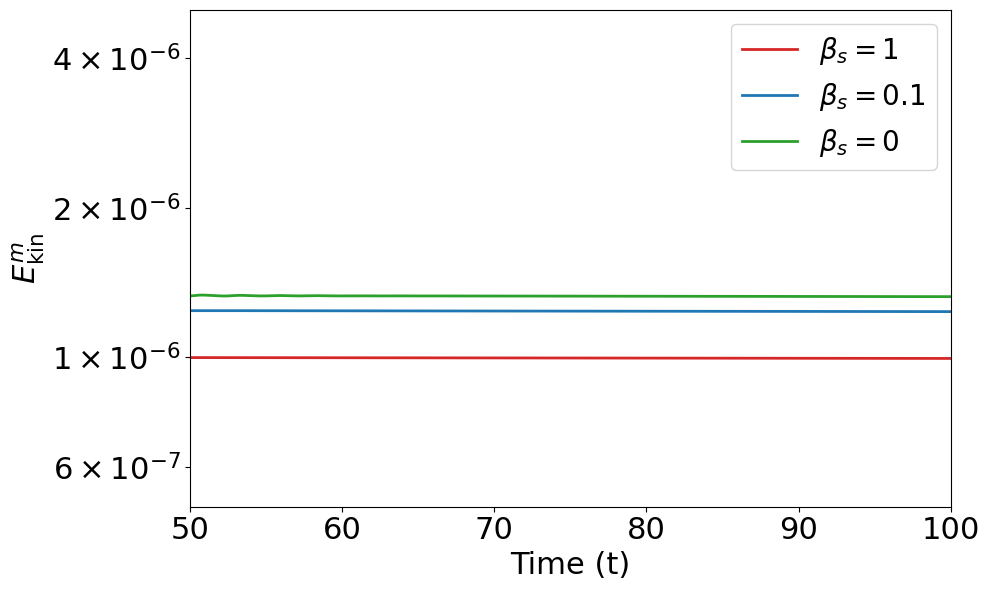}
    \caption{Kinetic energy, direct method \eqref{BGN-old}}
\end{subfigure}
\hfill
\begin{subfigure}[t]{0.48\textwidth}
    \centering
    \includegraphics[width=\textwidth]{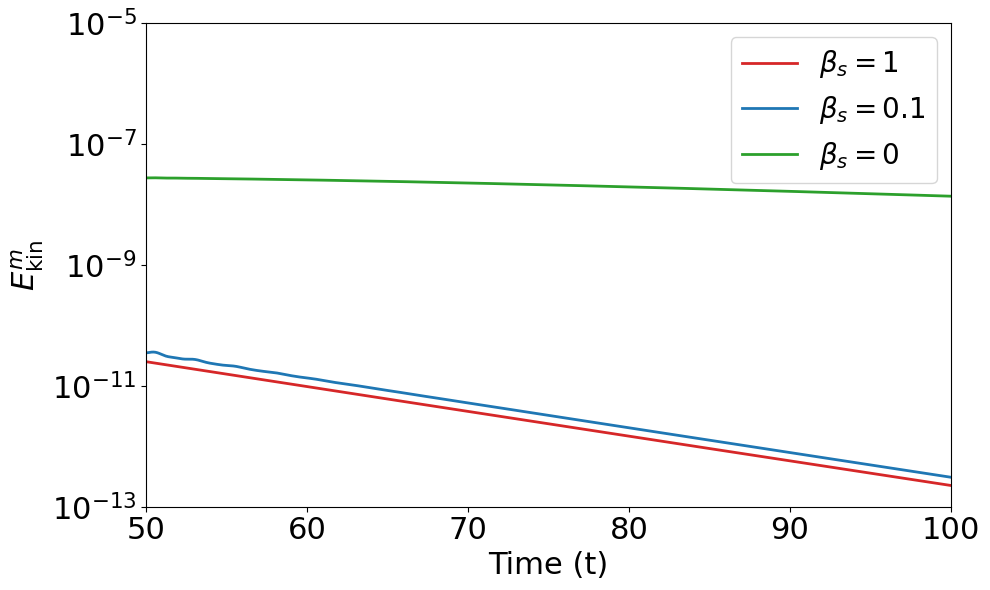}
    \caption{Kinetic energy, proposed method \eqref{BGN-2d}}
\end{subfigure}
\caption{Effect of the wall-friction coefficient \(\beta_s\). 
}
\label{fig:Droplet_BetaS}
\end{figure}

\subsection{Damping test}
Following \cite{Bansch2001}, we consider the droplet motion driven by surface tension.
Due to the initial imbalance of curvature along the free surface, the droplet starts to move and exhibits a damped oscillatory behavior.
We take
\(
Re=20, \theta_Y=60^\circ,
\)
and study the influence of different surface tension coefficients.
To quantify the capillary oscillation, we monitor the vertical coordinate of the droplet centroid,
\[
z_{\rm cen}^m
:= \frac{1}{|\Omega_h^m|}\int_{\Omega_h^m} z(\Bx) \,\Rd x,
\]
where \(z(\Bx)\) denotes the vertical coordinate. The computation is carried out up to \(T=10\). 
From the time history of \(z_{\rm cen}^m\), we identify successive local maxima at
\(
t_{p_0}<t_{p_1}<\cdots<t_{p_n}.
\)
The mean oscillation period and frequency are then computed as
\[
\overline{T}=\frac{t_{p_n}-t_{p_0}}{n},
\qquad
f=1 / \overline{T}
=\frac{n}{t_{p_n}-t_{p_0}}.
\]
Figure~\ref{fig:Droplet_CenHeight} shows the oscillation of \(z_{\rm cen}^m\) and the corresponding frequency scaling. 
In the absence of gravity, the capillary scaling \(f\sim W\!e^{-1/2}\) is reported in \cite{Bansch2001}. 
Our numerical results reproduce this behavior for \(g=0\). For the weak-gravity case considered here, the measured frequencies for \(g=1\) also follow the same scaling trend.

\begin{figure}[htbp]
\centering
\includegraphics[width=0.48\textwidth]{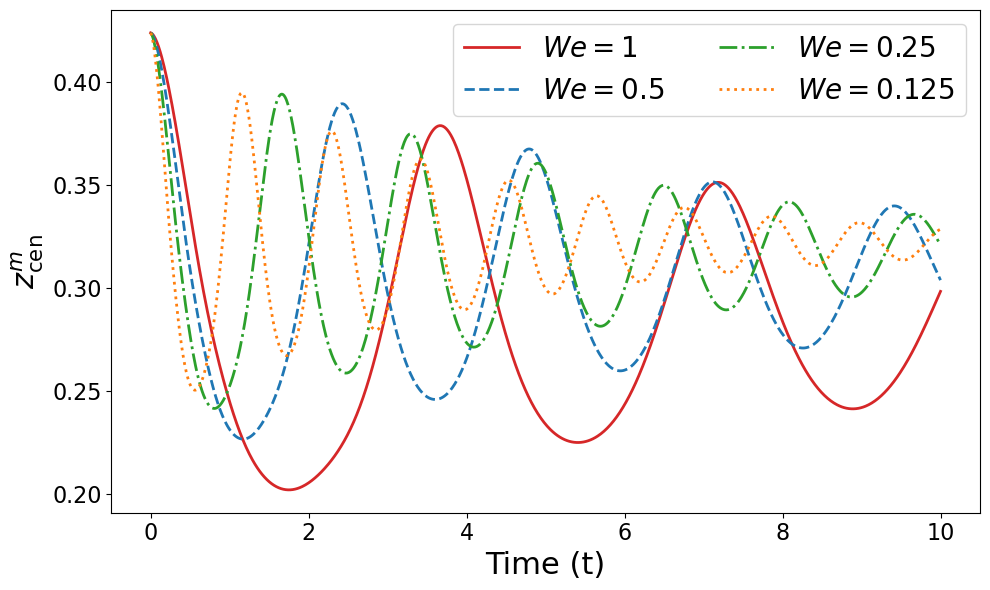}
\hfill
\includegraphics[width=0.48\textwidth]{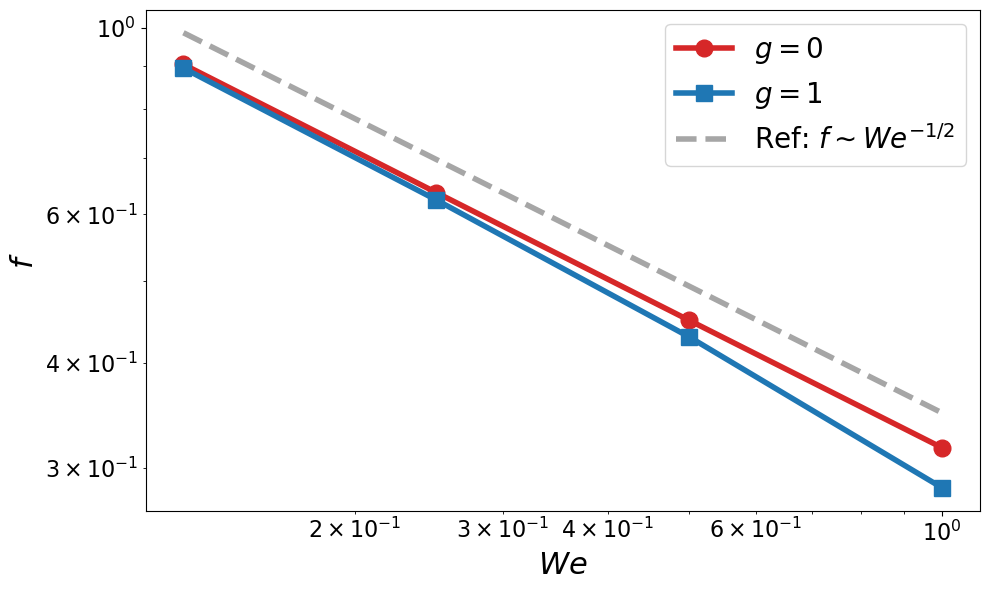}
\caption{Capillary oscillation and frequency. Left: time histories of \(z^m_{\rm cen}\).
Right: measured mean frequency for \(g=0, 1\), together with the reference \(f\sim W\!e^{-1/2}\).
}
\label{fig:Droplet_CenHeight}
\end{figure}

\subsection{Three-dimensional ellipsoid droplet}
We next consider a three-dimensional droplet problem. The initial droplet geometry is configured as a semi-ellipsoid (restricted to the upper half-space $z > 0$), with semi-principal axes of lengths 1.5, 1, and 1 along the $x$-, $y$-, and $z$-directions, respectively. 
The initial mesh size is set to $h=0.15$. 
Two different contact angles are tested: the hydrophilic case $\theta_Y=60^\circ$ and the hydrophobic case $\theta_Y=120^\circ$. 
In both simulations, the dimensionless parameters are chosen as $g=1$, $Re=20$, and $W\!e=1$.

Figure \ref{fig:Ellipsoid_3D_1} shows the evolution of the droplet surface over one oscillation cycle. 
We observe that the droplet undergoes a clear capillary-gravity-driven oscillation before approaching its equilibrium shape. 
In both cases, the proposed BGN-MDR strategy preserves excellent mesh quality throughout the evolution, with no visible mesh degeneration near the free surface or the moving contact line. 
As shown in Figure \ref{fig:Ellipsoid_3D_2}, the relative volume error remains at the level of $10^{-13}$, demonstrating that the proposed method conserves the droplet volume up to machine precision. Moreover, the Newton solver typically converges within 7 to 10 iterations per time step, indicating that the fully discrete nonlinear system can be solved efficiently.

\begin{figure}[htbp]
\centering
\setlength{\tabcolsep}{2pt}
\renewcommand{\arraystretch}{1.15}
\begin{tabular}{c c c c c}
\(t=0\) & \(t=0.6\) & \(t=1.2\) & \(t=1.8\) & \(t=2.4\) \\
\includegraphics[width=0.19\textwidth]{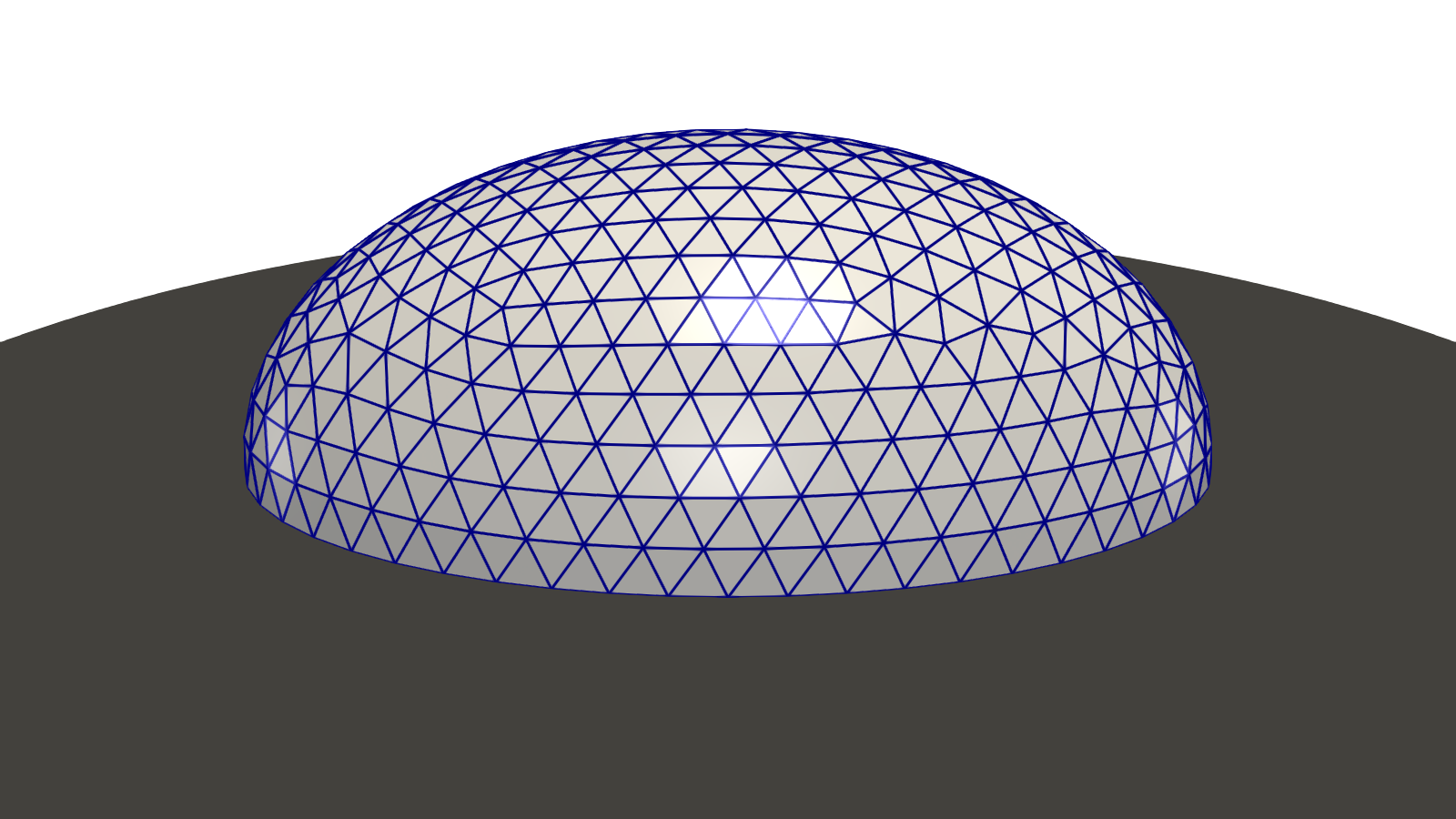} &
\includegraphics[width=0.19\textwidth]{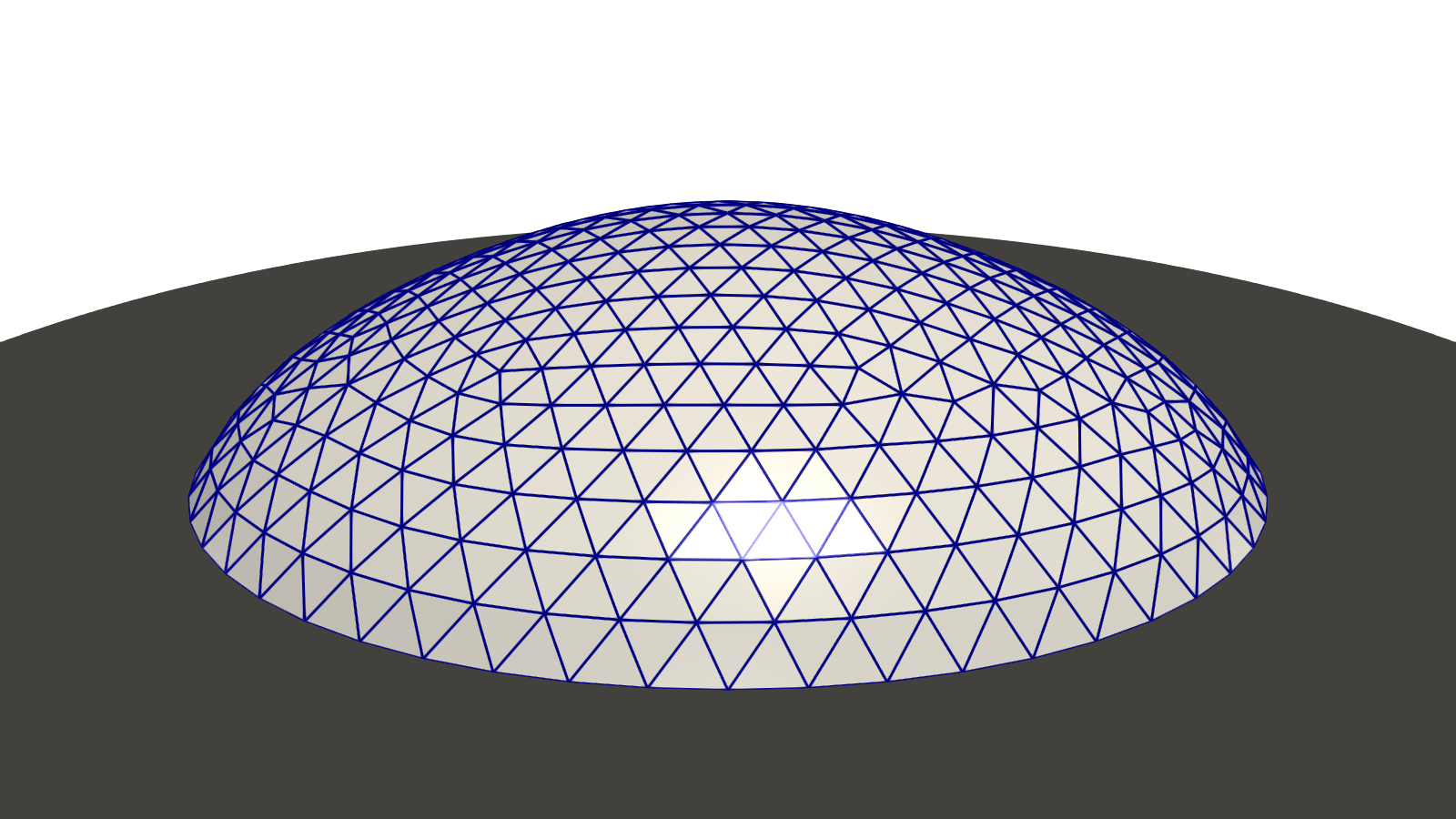} &
\includegraphics[width=0.19\textwidth]{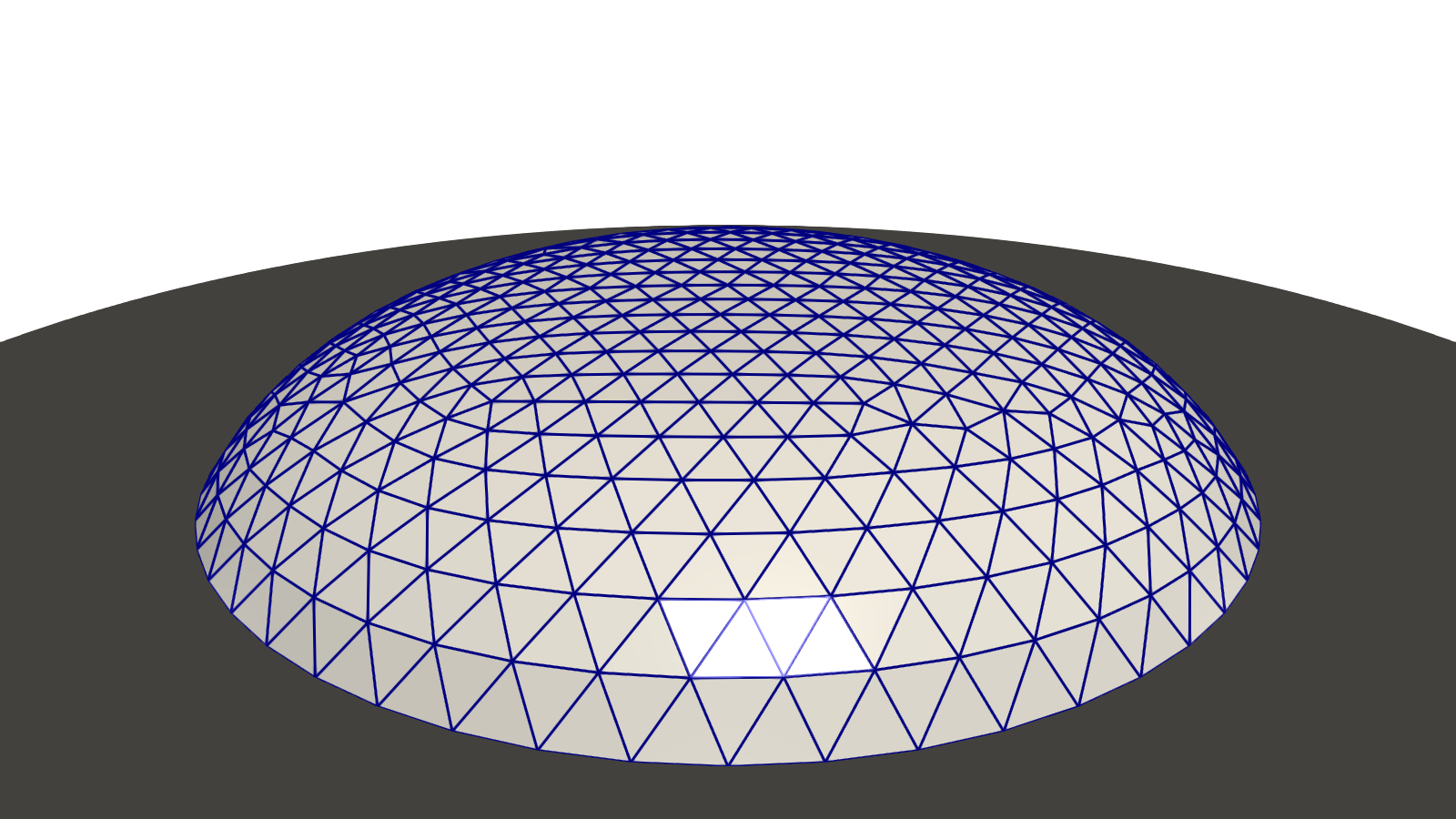} &
\includegraphics[width=0.19\textwidth]{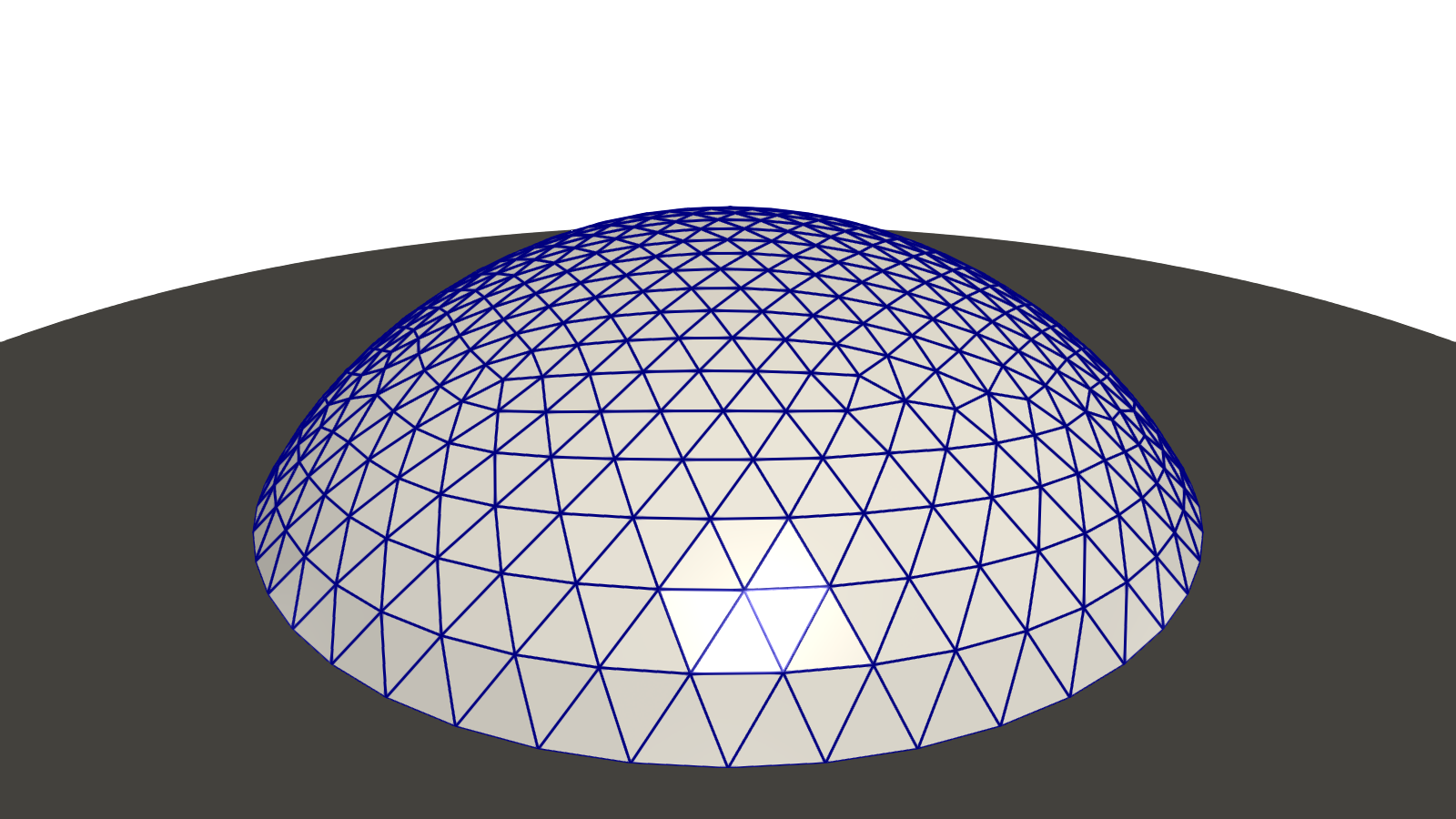} &
\includegraphics[width=0.19\textwidth]{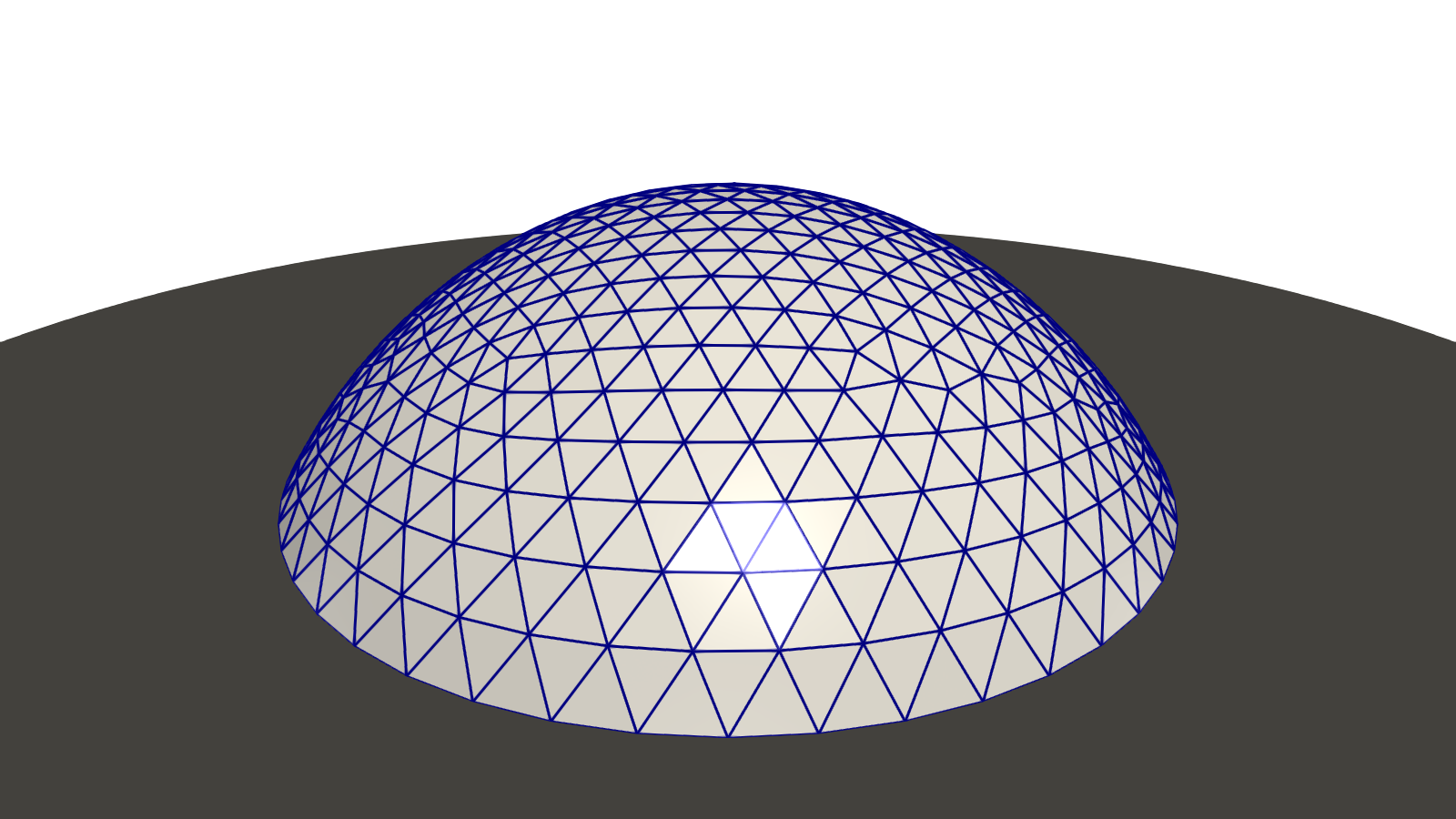} \\
\includegraphics[width=0.19\textwidth]{figure/Ellipsoid_Theta120_0.png} &
\includegraphics[width=0.19\textwidth]{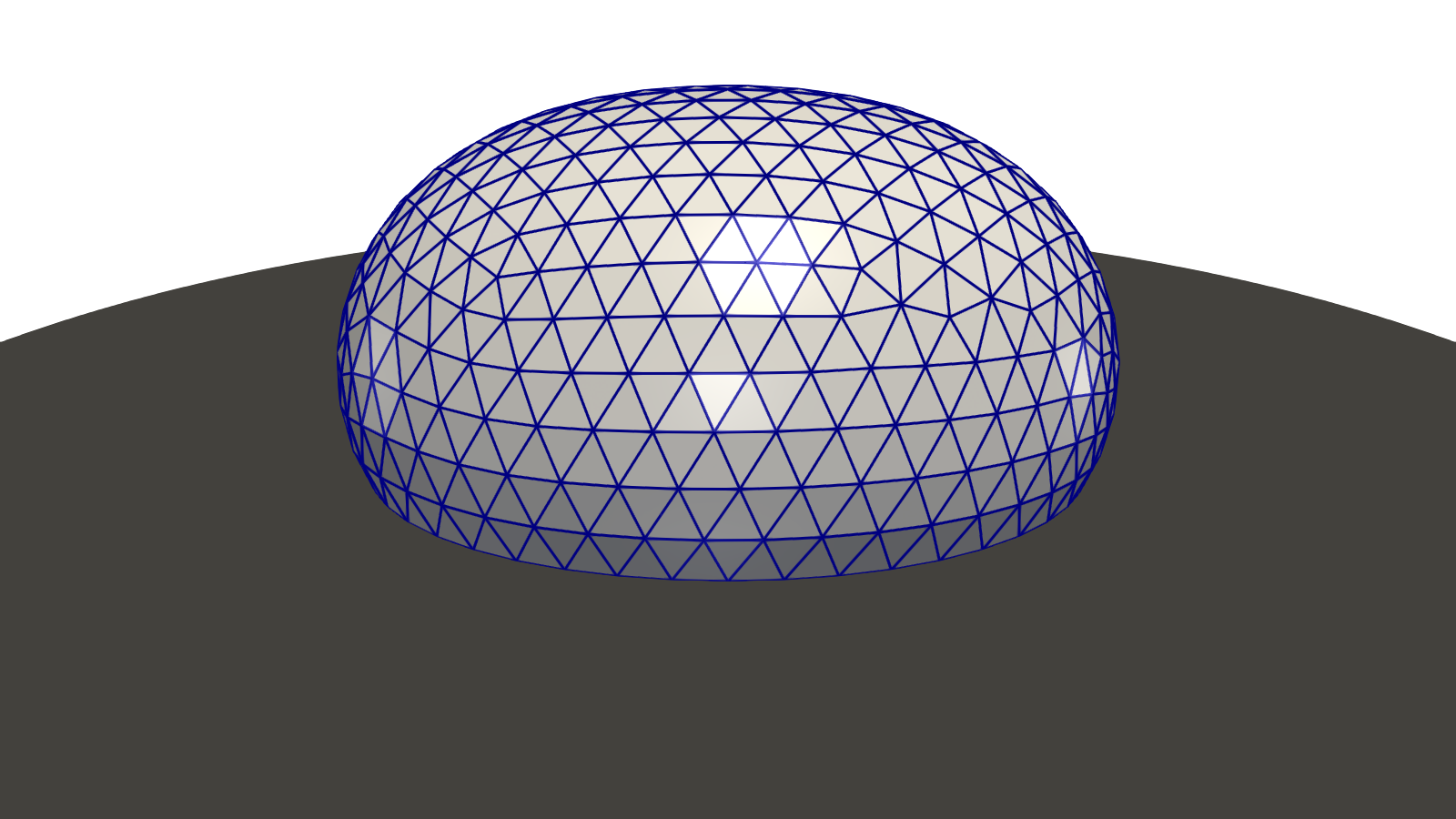} &
\includegraphics[width=0.19\textwidth]{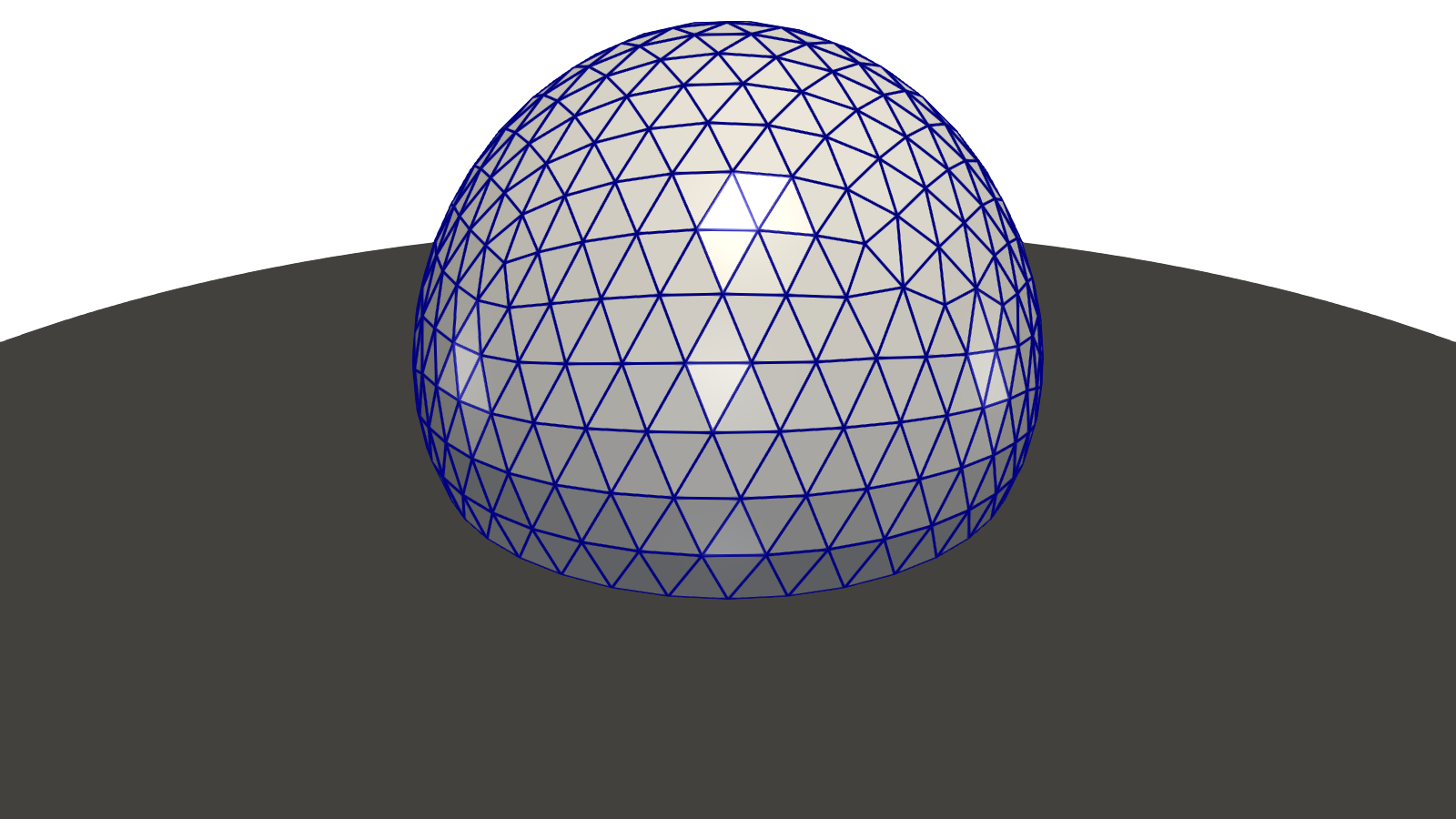} &
\includegraphics[width=0.19\textwidth]{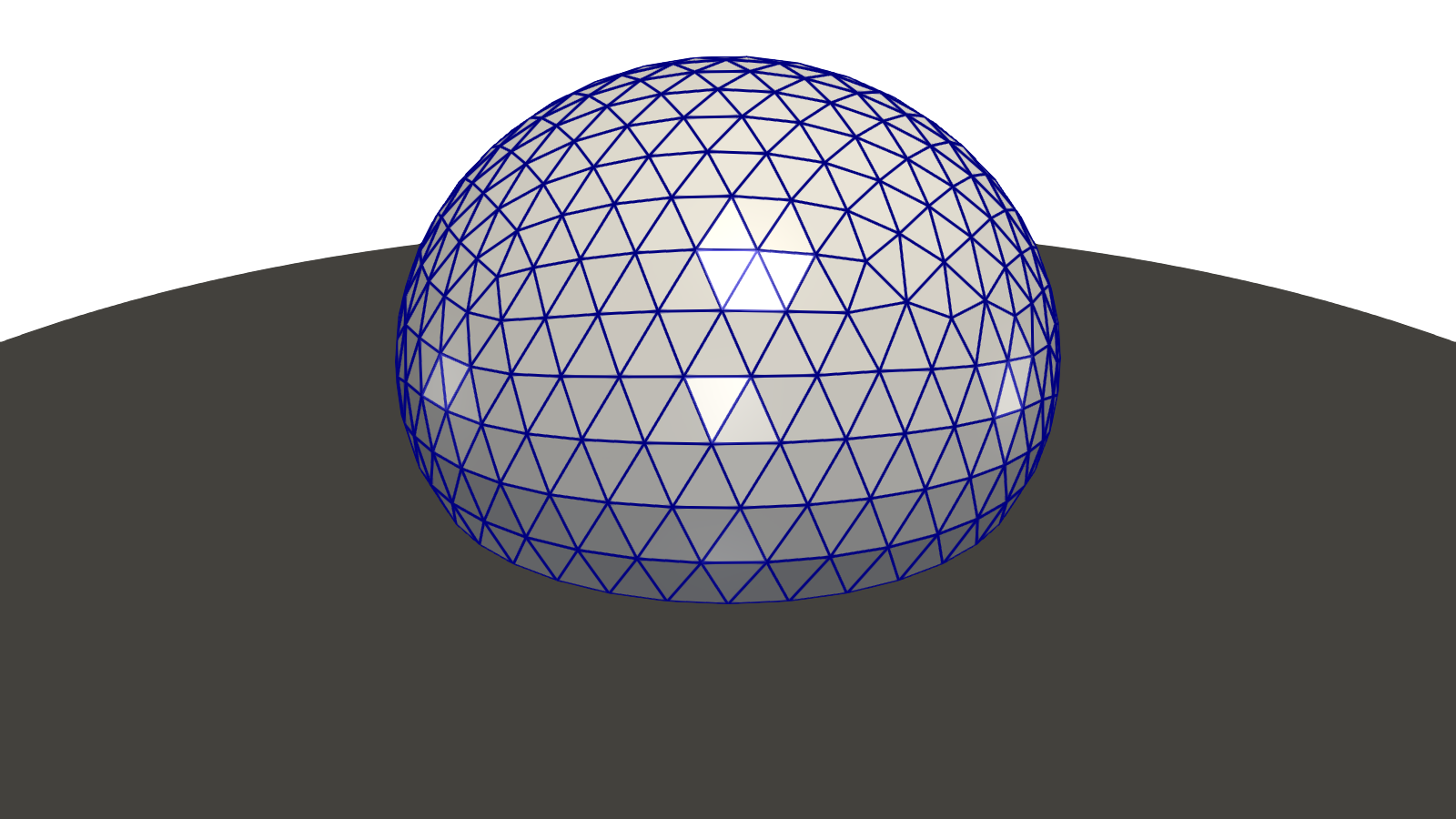} &
\includegraphics[width=0.19\textwidth]{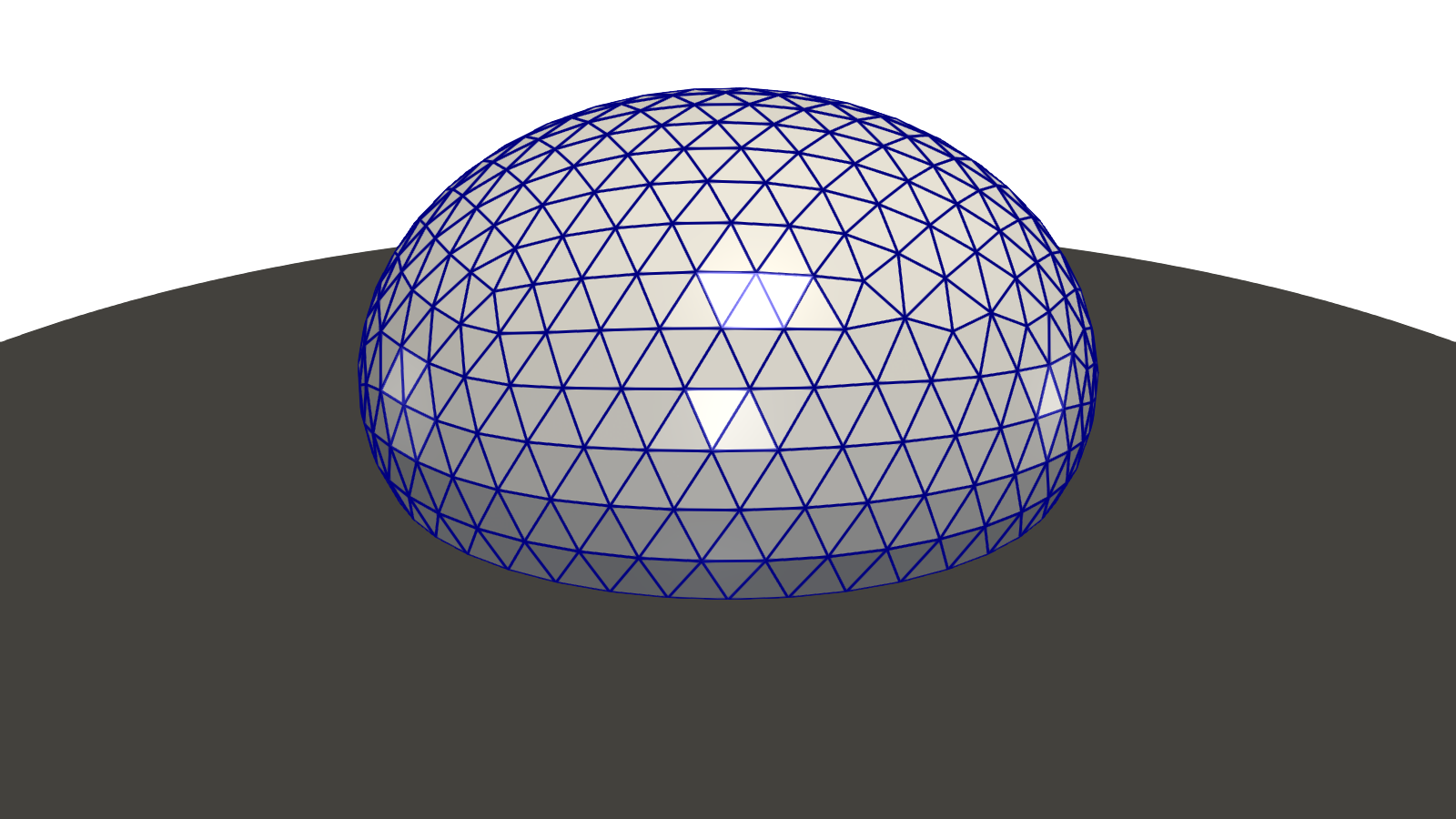}
\end{tabular}
\caption{Evolution of the ellipsoid droplet profile over one oscillation cycle. 
The top row corresponds to \(\theta_Y=60^\circ\), while the bottom row corresponds to \(\theta_Y=120^\circ\). }
\label{fig:Ellipsoid_3D_1}
\end{figure}

\begin{figure}[htbp]
\centering
\includegraphics[width=0.45\textwidth]{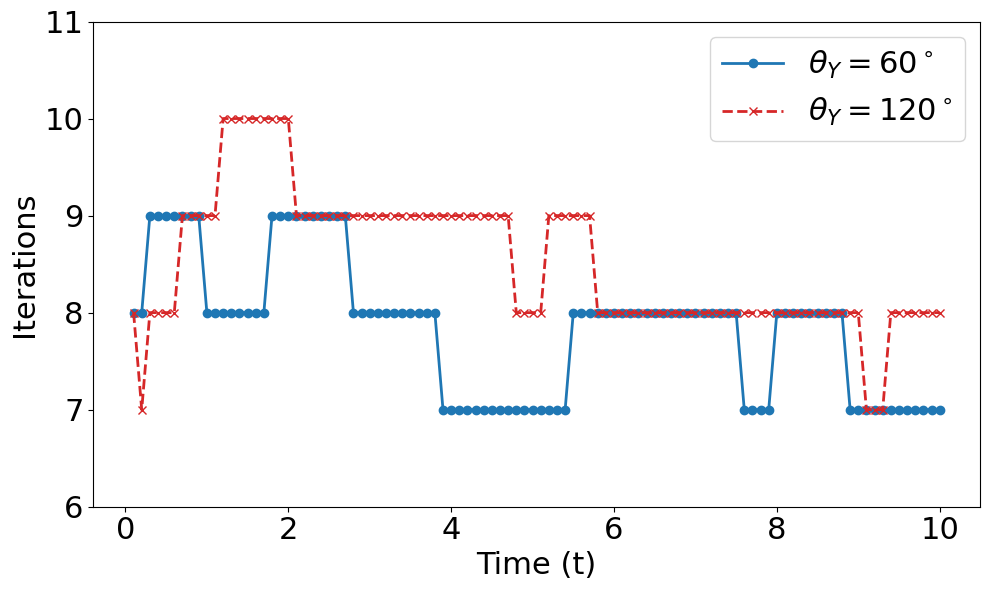}
\includegraphics[width=0.45\textwidth]{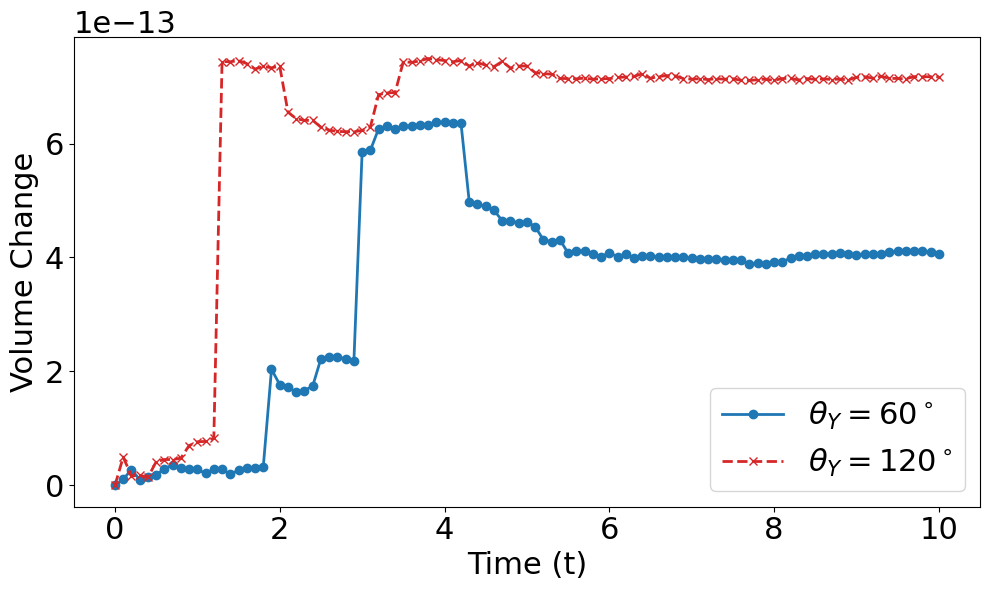}
\includegraphics[width=0.45\textwidth]{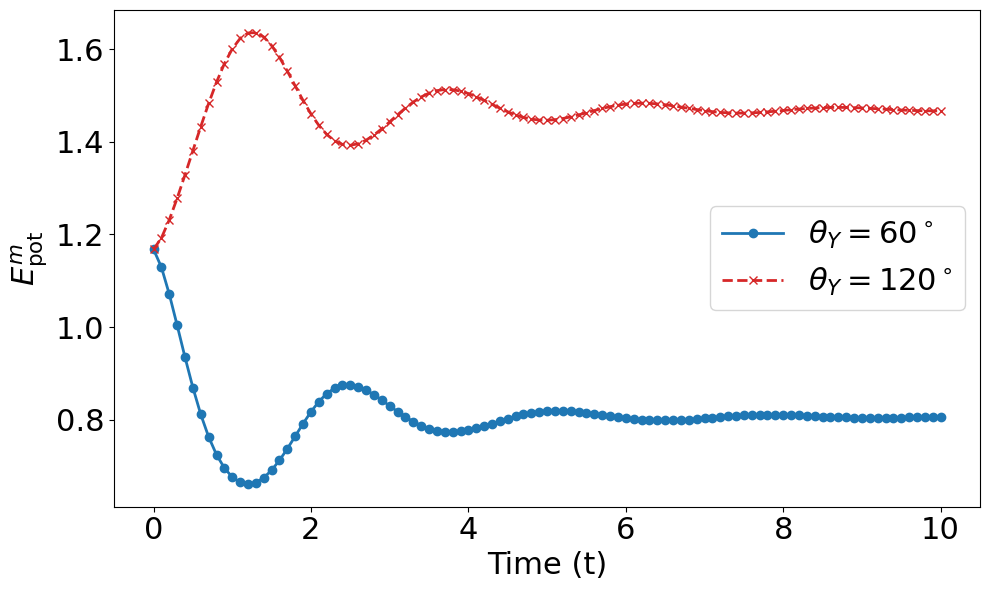}
\includegraphics[width=0.45\textwidth]{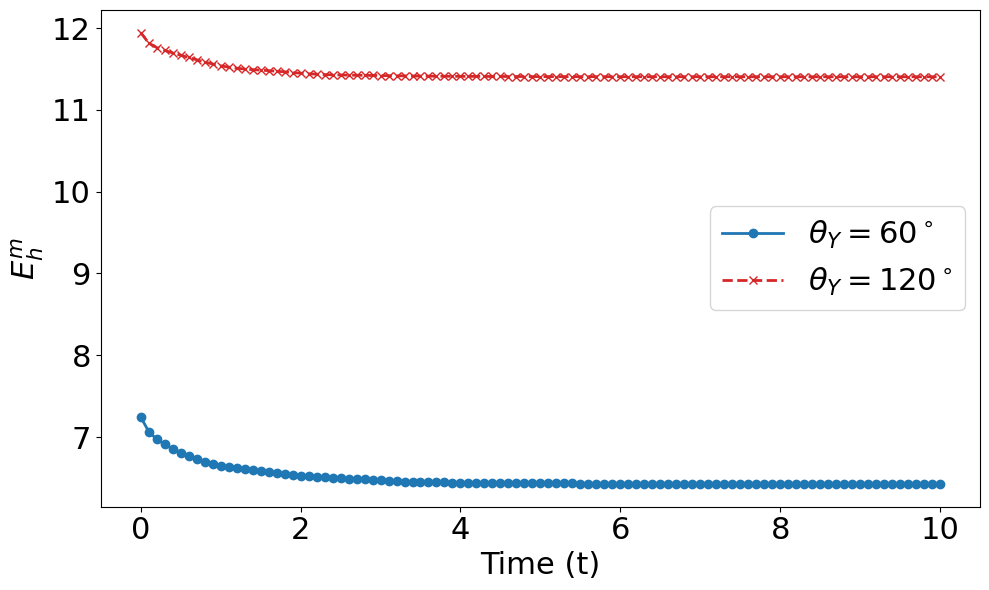}
\caption{Three-dimensional ellipsoid droplet simulations with \(\theta_Y = 60^\circ\) and \(\theta_Y = 120^\circ\) under gravity. 
Left upper: Newton iteration counts; 
Right upper: relative volume error \( \left| |\Omega_h^m|-|\Omega_h^0| \right| /|\Omega_h^0|\);
Left bottom: gravitational potential energy $E_{\rm pot}^m$; Right bottom: total energy $E_h^m$.}
\label{fig:Ellipsoid_3D_2}
\end{figure}

\FloatBarrier

\section{Conclusion}
\label{sec:conclusion}
In this paper, we develop a gravity-consistent ALE finite element method for Navier-Stokes free-boundary problems with moving contact lines. 
The key idea is to rewrite the gravitational force in geometric form and to evaluate the resulting moving-boundary contribution by Simpson's quadrature rule, so that the discrete gravitational work exactly matches the variation of the gravitational potential energy. 
As a result, the proposed nonlinear scheme preserves volume exactly, satisfies a discrete energy-dissipation law including gravity, 
and drives the discrete fluid velocity to zero in long-time simulations, thereby eliminating persistent spurious velocities. 
Numerical results in both two and three dimensions confirm the theoretical properties and demonstrate good mesh quality and correct long-time relaxation behavior.
The present framework can be extended naturally to more general interfacial flows, including two-phase problems and multiphase configurations with triple junctions.

\bibliographystyle{abbrv}
\bibliography{reference}

@article{Bansch2001,
  author  = {B{\"a}nsch, Eberhard},
  title   = {Finite element discretization of the {N}avier--{S}tokes equations with a free capillary surface},
  journal = {Numer. Math.},
  year    = {2001},
  volume  = {88},
  number  = {2},
  pages   = {203--235},
  doi     = {10.1007/PL00005443},
  url     = {https://link.springer.com/article/10.1007/PL00005443}
}

@article{Bao2021,
  title={A structure-preserving parametric finite element method for surface diffusion},
  author={Bao, Weizhu and Zhao, Quan},
  journal={SIAM J. Numer. Anal.},
  volume={59},
  number={5},
  pages={2775--2799},
  year={2021},
  publisher={SIAM}
}

@article{Barrett2007_BGN2,
title = {On the parametric finite element approximation of evolving hypersurfaces in ${R}^3$},
journal = {J. Comput. Phys.},
volume = {227},
number = {9},
pages = {4281-4307},
year = {2008},
issn = {0021-9991},
doi = {https://doi.org/10.1016/j.jcp.2007.11.023},
url = {https://www.sciencedirect.com/science/article/pii/S0021999107005141},
author = {John W. Barrett and Harald Garcke and Robert Nürnberg}
}

@incollection{BGN2020HoNA,
title = {Chapter 4 - Parametric finite element approximations of curvature-driven interface evolutions},
editor = {Andrea Bonito and Ricardo H. Nochetto},
series = {Handbook of Numerical Analysis},
publisher = {Elsevier},
volume = {21},
pages = {275-423},
year = {2020},
booktitle = {Geometric Partial Differential Equations - Part I},
issn = {1570-8659},
doi = {https://doi.org/10.1016/bs.hna.2019.05.002},
url = {https://www.sciencedirect.com/science/article/pii/S1570865919300055},
author = {John W. Barrett and Harald Garcke and Robert Nürnberg}
}

@book{Boffi2013,
  title={Mixed Finite Element Methods and Applications},
  author={Boffi, Daniele and Brezzi, Franco and Fortin, Michel},
  volume={44},
  year={2013},
  publisher={Springer}
}

@article{Duan2022,
title = {An energy diminishing arbitrary {L}agrangian–{E}ulerian finite element method for two-phase {N}avier–{S}tokes flow},
journal = {J. Comput. Phys.},
volume = {461},
pages = {111215},
year = {2022},
issn = {0021-9991},
doi = {https://doi.org/10.1016/j.jcp.2022.111215},
url = {https://www.sciencedirect.com/science/article/pii/S0021999122002777},
author = {Beiping Duan and Buyang Li and Zongze Yang}
}

@article{Garcke2023,
title = { Structure-preserving discretizations of two-phase {N}avier–{S}tokes flow using fitted and unfitted approaches},
journal = {J. Comput. Phys.},
volume = {489},
pages = {112276},
year = {2023},
issn = {0021-9991},
doi = {https://doi.org/10.1016/j.jcp.2023.112276},
url = {https://www.sciencedirect.com/science/article/pii/S0021999123003716},
author = {Harald Garcke and Robert Nürnberg and Quan Zhao},
}

@article{Hu2022,
  title = {Evolving finite element methods with an artificial tangential velocity for mean curvature flow and {{Willmore}} flow},
  author = {Hu, Jiashun and Li, Buyang},
  date = {2022},
  volume = {152},
  number = {1},
  pages = {127--181},
  doi = {10.1007/s00211-022-01309-9},
  langid = {english},
  journal = {Numer. Math.},
  year = {2022},
  dimensions = {true},
}

@article{Gao2024_rotation,
  title = {A Stabilized arbitrary {L}agrangian-{E}ulerian sliding interface method for fluid-structure
  interaction with a rotating rigid structure},
  author = {Gao, Yali and Hu, Jiashun and Li, Buyang},
  date = {},
  volume = {47},
  number = {5},
  pages = {A2533-A2558},
  year = {2025},
  doi = {10.1137/24M1695919},
  langid = {english},
  journal = {SIAM J. Sci. Comput.},
  dimensions = {true},
}

@article{Ren2007,
    author = {Ren, Weiqing and E, Weinan},
    title = {Boundary conditions for the moving contact line problem},
    journal = {Phys. Fluids},
    volume = {19},
    number = {2},
    pages = {022101},
    year = {2007},
    month = {02},
    issn = {1070-6631},
    doi = {10.1063/1.2646754},
    url = {https://doi.org/10.1063/1.2646754},
    eprint = {https://pubs.aip.org/aip/pof/article-pdf/doi/10.1063/1.2646754/15636165/022101\_1\_online.pdf},
}

@article{Ren2011,
  title={Derivation of continuum models for the moving contact line problem based on thermodynamic principles},
  author={Ren, Weiqing and E, Weinan},
  journal={Commun. Math. Sci.},
  volume={9},
  number={2},
  pages={597--606},
  year={2011}
}

@article{Zhao2020,
title = { An energy-stable finite element method for the simulation of moving contact lines in two-phase flows},
journal = {J. Comput. Phys.},
volume = {417},
pages = {109582},
year = {2020},
issn = {0021-9991},
doi = {https://doi.org/10.1016/j.jcp.2020.109582},
url = {https://www.sciencedirect.com/science/article/pii/S0021999120303569},
author = {Quan Zhao and Weiqing Ren}
}

@article{Bao2023,
  author = {W. Bao and Q. Zhao},
  title = {An energy-stable parametric finite element method for simulating solid-state dewetting problems in three dimensions},
  journal = {J. Comput. Math.},
  volume = {41},
  number = {4},
  pages = {771--796},
  year = {2023}
}

@article{Gao-Li-2025,
  title = {Geometric-structure preserving methods for surface evolution in curvature flows with minimal deformation formulations},
journal = {J. Comput. Phys.},
volume = {524},
pages = {113718},
year = {2025},
issn = {0021-9991},
doi = {https://doi.org/10.1016/j.jcp.2025.113718},
url = {https://www.sciencedirect.com/science/article/pii/S0021999125000014},
author = {Guangwei Gao and Buyang Li},
}

@article{Xu2023,
author = {Xu, Xianmin},
title = {A Unified Variational Framework on Macroscopic Computations for Two-Phase Flow with Moving Contact Lines},
journal = {SIAM J. Sci. Comput.},
volume = {45},
number = {6},
pages = {B776-B801},
year = {2023},
doi = {10.1137/23M1546816},
URL = { 
        https://doi.org/10.1137/23M1546816
},
eprint = { 
        https://doi.org/10.1137/23M1546816
}
,
}

@article{Zhang2026,
title = {Structure-preserving parametric finite element methods for two-phase {S}tokes flow based on {L}agrange multiplier approaches},
journal = {J. Comput. Phys.},
volume = {560},
pages = {114922},
year = {2026},
issn = {0021-9991},
doi = {https://doi.org/10.1016/j.jcp.2026.114922},
url = {https://www.sciencedirect.com/science/article/pii/S002199912600272X},
author = {Harald Garcke and Dennis Trautwein and Ganghui Zhang},
}

@article{Jiang2021,
title = {A perimeter-decreasing and area-conserving algorithm for surface diffusion flow of curves},
journal = {J. Comput. Phys.},
volume = {443},
pages = {110531},
year = {2021},
issn = {0021-9991},
doi = {https://doi.org/10.1016/j.jcp.2021.110531},
url = {https://www.sciencedirect.com/science/article/pii/S0021999121004265},
author = {Wei Jiang and Buyang Li},
}

@article{Hu2026_Droplet,
author = {Hu, Jiashun and Lei, Nuo and Li, Buyang and Tang, Rong},
title = {Energy dissipating {ALE}-{MDR} method for {N}avier–{S}tokes free boundary problems with moving contact line},
journal = {SIAM J. Sci. Comput.},
volume = {48},
number = {3},
pages = {A1284-A1311},
year = {2026},
doi = {10.1137/25M1784958},
URL = {https://doi.org/10.1137/25M1784958},
}

@article{Nangia2019,
title = {A robust incompressible {N}avier-{S}tokes solver for high density ratio multiphase flows},
journal = {J. Comput. Phys.},
volume = {390},
pages = {548-594},
year = {2019},
issn = {0021-9991},
doi = {https://doi.org/10.1016/j.jcp.2019.03.042},
url = {https://www.sciencedirect.com/science/article/pii/S0021999119302256},
author = {Nishant Nangia and Boyce E. Griffith and Neelesh A. Patankar and Amneet Pal Singh Bhalla},
}

@article{Patel2017,
title = {A novel consistent and well-balanced algorithm for simulations of multiphase flows on unstructured grids},
journal = {J. Comput. Phys.},
volume = {350},
pages = {207-236},
year = {2017},
issn = {0021-9991},
doi = {https://doi.org/10.1016/j.jcp.2017.08.047},
url = {https://www.sciencedirect.com/science/article/pii/S0021999117306289},
author = {Jitendra Kumar Patel and Ganesh Natarajan},
}

@article{BGN2013_spurious,
title = {Eliminating spurious velocities with a stable approximation of viscous incompressible two-phase {S}tokes flow},
journal = {Comput. Methods Appl. Mech. Engrg.},
volume = {267},
pages = {511-530},
year = {2013},
issn = {0045-7825},
doi = {https://doi.org/10.1016/j.cma.2013.09.023},
url = {https://www.sciencedirect.com/science/article/pii/S0045782513002508},
author = {John W. Barrett and Harald Garcke and Robert Nürnberg},
}

@article{Ganesan2007,
title = {On spurious velocities in incompressible flow problems with interfaces},
journal = {Comput. Methods Appl. Mech. Engrg.},
volume = {196},
number = {7},
pages = {1193-1202},
year = {2007},
issn = {0045-7825},
doi = {https://doi.org/10.1016/j.cma.2006.08.018},
url = {https://www.sciencedirect.com/science/article/pii/S004578250600291X},
author = {Sashikumaar Ganesan and Gunar Matthies and Lutz Tobiska},
}

@book{Zeidler1988IV,
  title={Nonlinear Functional Analysis and its Applications: IV: Applications to Mathematical Physics},
  author={Zeidler, Eberhard},
  year={1988},
  publisher={Springer-Verlag},
  address={New York},
  isbn={978-0-387-96499-7}
}

@article{Guo2024,
  title={Stability of contact lines in fluids: 2{D} {N}avier--{S}tokes flow},
  author={Guo, Yan and Tice, Ian},
  journal={J. Eur. Math. Soc.},
  volume={26},
  number={4},
  pages={1445--1557},
  year={2024},
}

@article{Qian2006JFM, 
title={A variational approach to moving contact line hydrodynamics}, 
volume={564}, 
DOI={10.1017/S0022112006001935}, 
journal={J. Fluid Mech.}, 
author={Qian, Tiezheng and Wang, Xiao-Ping and Sheng, Ping}, 
year={2006}, 
pages={333-–360}
}

@article{Qian2003,
  title={Molecular scale contact line hydrodynamics of immiscible flows},
  author={Qian, Tiezheng and Wang, Xiao-Ping and Sheng, Ping},
  journal={Phys. Rev. E},
  volume={68},
  number={1},
  pages={016306},
  year={2003},
  publisher={APS}
}

@article{GarckeJiangSuZhang2025SISC,
author = {Garcke, Harald and Jiang, Wei and Su, Chunmei and Zhang, Ganghui},
title = {Structure-Preserving Parametric Finite Element Method for Surface Diffusion Based on {L}agrange Multiplier Approaches},
journal = {SIAM J. Sci. Comput.},
volume = {47},
number = {3},
pages = {A1983-A2011},
year = {2025},
doi = {10.1137/24M1687546},
}

@article{TangTang2003SINUM,
author = {Tang, Huazhong and Tang, Tao},
title = {Adaptive Mesh Methods for One- and Two-Dimensional Hyperbolic Conservation Laws},
journal = {SIAM J. Numer. Anal.},
volume = {41},
number = {2},
pages = {487-515},
year = {2003},
doi = {10.1137/S003614290138437X},
}

@article{DziukElliott2013MC,
  title={{$L^2$}-estimates for the evolving surface finite element method},
  author={Dziuk, Gerhard and Elliott, Charles M.},
  journal={Math. Comput.},
  volume={82},
  number={281},
  pages={1--24},
  year={2013},
  publisher={American Mathematical Society}
}

@article{Zhao2021CMAME,
title = { A thermodynamically consistent model and its conservative numerical approximation for moving contact lines with soluble surfactants},
journal = {Comput. Methods Appl. Mech. Engrg.},
volume = {385},
pages = {114033},
year = {2021},
issn = {0045-7825},
doi = {https://doi.org/10.1016/j.cma.2021.114033},
url = {https://www.sciencedirect.com/science/article/pii/S0045782521003649},
author = {Quan Zhao and Weiqing Ren and Zhen Zhang},
}

@article{Agnese2020,
title = { Fitted front tracking methods for two-phase incompressible {N}avier–{S}tokes flow: {E}ulerian and {ALE} finite element discretizations },
author = {Agnese, Marco and N{\"u}rnberg, Robert},
volume={17}, 
url={https://www.global-sci.com/ijnam/article/view/10421}, 
number={5}, 
journal = {Int. J. Numer. Anal. Model.},
year={2020}, 
month={Aug.}, 
pages={613-–642} 
}

@article{BGN2015,
  author = {John W. Barrett and Harald Garcke and Robert Nürnberg},
  title   = { A Stable Parametric Finite Element Discretization of Two-Phase {N}avier--{S}tokes Flow},
  journal = {J. Sci. Comput.},
  volume  = {63},
  number  = {1},
  pages   = {78--117},
  year    = {2015},
  doi     = {10.1007/s10915-014-9885-2},
  url     = {https://doi.org/10.1007/s10915-014-9885-2}
}

\end{document}